\pgfplotsset{compat=newest}
\crefname{equation}{}{}
\crefname{table}{Table}{Tables}
\crefname{figure}{Figure}{Figures}
\crefname{section}{Section}{Sections}
\crefname{theorem}{Theorem}{Theorems}
\crefname{remark}{Remark}{Remarks}
\crefname{lemma}{Lemma}{Lemmas}
\crefname{proposition}{Proposition}{Propositions}
\crefname{definition}{Definition}{Definitions}
\crefname{observation}{Observation}{Observations}
\newcommand{\lrp}[1]{\left(#1\right)}
\newcommand{\ds}{\displaystyle}
\newcommand{\dsum}{\ds \sum}
\newcommand{\lrbr}[1]{\llbracket #1 \rrbracket}
\newcommand{\y}{z}
\newcommand{\z}{z}
\newcommand{\xmin}{\ubar{x}}
\newcommand{\ymin}{\ubar{y}}
\newcommand{\xmax}{\bar{x}}
\newcommand{\ymax}{\bar{y}}
\newcommand{\xhat}{\hat{x}}
\newcommand{\yhat}{\hat{\y}}
\newcommand{\zhat}{\hat{\z}}
\newcommand{\lx}{l_x}
\newcommand{\lxi}{l_{x_i}}
\newcommand{\ly}{l_y}
\newcommand{\lyj}{l_{y_j}}
\newcommand{\Del}[2][L]{{\Delta_{#2}^#1}}
\DeclareMathOperator{\proj}{proj}
\DeclareMathOperator{\conv}{conv}
\newcommand{\F}{f}
\newcommand{\capF}{F}
\newcommand{\funcf}{F}
\newcommand{\IPtiny}{{\text{\normalfont \fontsize{6}{6}\selectfont IP}}}
\newcommand{\LPtiny}{{\text{\normalfont \fontsize{6}{6}\selectfont LP}}}
\newcommand{\PLP}{P^{\LPtiny}} 
\newcommand{\PIP}{P^{\IPtiny}}
\newcommand{\lxk}{l_{x_k}} 
\newcommand{\lyl}{l_{x_l}}
\newcommand{\morsireform}{\textnormal{Bin2}\xspace}
\newcommand{\zellmerreform}{\textnormal{Bin3}\xspace}
\newcommand{\HybS}{\textnormal{HybS}\xspace}
\newcommand{\NMDT}{\textnormal{NMDT}\xspace}
\newcommand{\DNMDT}{\textnormal{D-NMDT}\xspace}
\newcommand{\fcdot}{\,\cdot\,}
\newcommand{\fcarg}[1]{\def\fc@rg{#1}\ifx\fc@rg\empty\fcdot\else\fc@rg\fi}
\newcommand{\abs}[1]{\lvert\fcarg{#1}\rvert}
\newcommand{\field}{\mathbbm}
\newcommand{\reals}{\field{R}}
\newcommand{\R}{\reals} %
\newcommand{\N}{\field{N}} %
\newcommand{\abbr}[1][abbrev]{#1.\ }%
\newcommand{\eg}{\abbr[e.g]}
\newcommand{\ie}{\abbr[i.e]}
\newcommand{\wrt}{\abbr[w.r.t]}
\newcommand{\st}{\mathrm{s.t.}}
\newcommand{\pwl}{\abbr[pwl]}
\renewcommand{\Set}[1]{\left\{#1\right\}}
\newcommand{\objref}[4]{\def\obj@rg{#4}%
  #1\ifx\obj@rg\empty#2\else#3\xspace\ref{#4}--\fi\ref}
\newcommand{\Sobjref}[1]{\objref{#1}{~}{s}}
\newcommand{\Tabref}[1][]{\Sobjref{Table}{#1}}
\DeclareRobustCommand{\ubar}[1]{\text{\b{$#1$}}}
\newcommand{\define}{\coloneqq}%
\newcommand{\non}{non-}%
\DeclareMathOperator{\epi}{epi}
\DeclareMathOperator{\gra}{gra}
\DeclareMathOperator{\hyp}{hyp}
\DeclareMathOperator{\vol}{vol}
\newcommand{\proju}{\proj_{\bm u}}
\newcommand{\averageErrorWidth}{average error width }
\newcommand{\averageErrorWidths}{average error widths }
\newcounter{claims}
\pgfplotsset{compat=newest}
\NewDocumentCommand{\mref}{m}{\textup{\quinn_mref:n {#1}}}
\begin{document}

\title{Enhancements of Discretization Approaches for Non-Convex Mixed-Integer Quadratically Constraint Quadratic Programming: Part II\thanks{B. Beach and R. Hildebrand are supported by AFOSR grant FA9550-21-0107. Furthermore, we acknowledge financial support
by the Bavarian Ministry of Economic Affairs, Regional Development and Energy through the Center for Analytics -- Data -- Applications (ADA-Center) within the framework of ``BAYERN DIGITAL II''.}}
\titlerunning{Enhancements of Discretization Approaches for Non-Convex MIQCQPs}
\author{
Benjamin Beach\inst{1}
\and
Robert Burlacu\inst{2} %
\and
Andreas B\"armann\inst{3} %
\and
Lukas Hager \inst{3}
\and
Robert Hildebrand\inst{1}
}
\authorrunning{ 
B. Beach,
R. Burlacu,
A. B\"armann,
L. Hager,
R. Hildebrand}
\institute{
Grado Department of Industrial and Systems Engineering, Virginia Tech, Blacksburg, Virginia, USA\\
\email{\{bben6,rhil\}@vt.edu}
\and
Fraunhofer Institute for Integrated Circuits IIS, D-90411 N\"urnberg, Germany
\email{robert.burlacu@iis.fraunhofer.de}\\ \and
Friedrich-Alexander-Universit\"at Erlangen-N\"urnberg, D-91058 Erlangen, Germany 
\email{andreas.baermann@math.uni-erlangen.de,
lukas.hager@fau.de}\\
}
\maketitle              %
\begin{abstract}
This is Part II of a study on mixed-integer programming (MIP) relaxation techniques for the solution of \non convex mixed-integer quadratically constrained quadratic programs (MIQCQPs).
We set the focus on MIP relaxation methods for \non convex continuous variable products and extend the well-known MIP relaxation
\emph{normalized multiparametric disaggregation technique} (\NMDT), applying a sophisticated
 discretization to both variables.
We refer to this approach as \emph{doubly discretized normalized multiparametric disaggregation technique} (D-NMDT). 
In a comprehensive theoretical analysis, we underline the theoretical advantages of the enhanced method \DNMDT compared to \NMDT.
Furthermore, we perform a broad computational study
to demonstrate its effectiveness
in terms of producing tight dual bounds for MIQCQPs.
Finally, we compare \DNMDT to the separable MIP relaxations from Part I and a state-of-the-art MIQCQP solver.

\end{abstract}
\keywords{Quadratic Programming \and MIP Relaxations \and Discretization \and Binarization \and Piecewise Linear Approximation.}

\section{Introduction}
\label{ssec:pdisc}

In this work, we study relaxations
of general mixed-integer quadratically constrained quadratic programs (MIQCQPs).
More precisely, we consider discretization techniques for \non convex MIQCQPs
that allow for relaxations  of the set of feasible solutions
based on mixed-integer programming (MIP) formulations.

We enhance the \emph{normalized multiparametric disaggregation technique} (\NMDT) introduced in \cite{castro2015-nmdt}.
\NMDT is a  \emph{McCormick relaxation} based MIP relaxation approach, which is applied to form relaxations
of the quadratic equations $ \y = x^2 $ and $ \z = xy $. The McCormick relaxation is a set of four inequalities that describe the convex hull of the feasible points of the equation $\z=xy$ in the satisfying finite lower and upper bounds on $x$ and $y$, see \cite{McCormick1976}. 
We extend \NMDT by applying a discretization to both variables. We refer to the latter as \emph{doubly discretized \NMDT} (\DNMDT).
Both MIP formulations, \NMDT and \DNMDT, can be applied to~MIQCQPs to form an MIP relaxation
by introducing auxiliary variables and one such quadratic equation for each quadratic term 
in the MIQCQP.
Such an MIP relaxation can then be solved with a standard MIP solver.
We analyze these MIP relaxation approaches theoretically and computationally
with respect to the quality of the dual bound they deliver for MIQCQPs.

For a thorough discussion of background on discretization and piecewise linear techniques in MIQCQPs, please refer to Part I~\cite{Part_I}.

\noindent\textbf{Contribution}
We extend \NMDT by a discretization of both variables, called \DNMDT.
We analyze both MIP relaxations
in terms of the dual bound they impose for \non convex~MIQCQPs.
In a theoretical analysis, we show that \DNMDT requires fewer binary variables 
and yields better \emph{linear programming} (LP) relaxations at identical relaxation errors compared to \NMDT.
Finally, we perform an extensive numerical study where we use \NMDT and \DNMDT
to generate MIP relaxations of \non convex MIQCQPs. 
We show that \DNMDT has clear advantages, such as tighter dual bounds, shorter runtimes, and it finds more feasible solutions to the original MIQCQPs when combined with a callback function that uses the \non linear programming (NLP) solver IPOPT \cite{ipopt}.
These effects become even more apparent in dense instances with many variable products.
Moreover, we combine \NMDT and \DNMDT with the \emph{tighten sawtooth epigraph relaxation} from Part I \cite{Part_I} to obtain even tighter relaxations for $\z=x^2$ terms in MIQCQPs. This tightening leads to improved results in the computational study.

\noindent\textbf{Outline}
In \Cref{sec:preliminaries} and \Cref{sec:form-core} we review several useful concepts, notations, and core formulations from Part I~\cite{Part_I}.  
In \Cref{sec:direct}, we recall the \NMDT MIP relaxation and introduce the new MIP relaxation \DNMDT.
In \Cref{sec:theory}, we prove various properties about the strengths of the MIP relaxations focusing on volume, sharpness, and optimal choice of breakpoints.
In \Cref{sec:computations}, we present our computational study.

\section{MIP Formulations}
\label{sec:preliminaries}

We follow Part I~\cite{Part_I} for notation used in this work.  We provide this section here for completeness of this article.  

We study relaxations
of general mixed-integer quadratically constrained\\ quadratic programs (MIQCQPs),
which are defined as
\begin{equation}
    \label{eqn:generic-problem}
    \begin{array}{rll}
        \ds \min & x'Q_0 x + c_0'x + d_0'y,\\
            \text{s.t.} & x'Q_j x + c_j'x + d_j'y + b_j \le 0 \quad& j \in 1, \ldots, m,\\
            & x_i \in [\xmin_i, \xmax_i] & i \in 1, \ldots, n,\\
            & y \in \{0, 1\}^k,
    \end{array}
\end{equation}
for $ Q_0, Q_j \in \R^{n \times n} $, $ c_0, c_j \in \R^n $, $ d_0, d_j \in \R^k $
and $ b_j \in \R $, $ j = 1, \ldots m $.
Throughout this article, we use the following convenient notation:
for any two integers $ i \leq j $, we define $ \lrbr{i, j} \define \{i, i + 1, \ldots, j\} $,
and for an integer $ i \geq 1 $ we define $ \lrbr{i} \define \lrbr{1, i} $.
We will denote sets using capital letters,
variables using lower case letters
and vectors of variables using bold face.
For a vector $ \bm u = (u_1, \ldots, u_n) $ and some index set $ I \subseteq \lrbr{n} $,
we write $ \bm u_I \define (u_i)_{i \in I} $.
Thus, \eg $ \bm u_{\lrbr{i}} = (u_1, \ldots, u_i) $.
Furthermore, %
we introduce the following notation:
for a function $ \funcf \colon X \to \R $ and a subset $ B \subseteq X $,
let $ \gra_B(\funcf) $, $ \epi_B(\funcf) $ and~$ \hyp_B(\funcf) $
denote the \emph{graph} and the \emph{epigraph}
of the function~$\funcf$ over the set~$B$, respectively.
That is,
\begin{align*}
    &\gra_B(\funcf) \define \{(\bm u,\y) \in B \times \R: \y = \funcf(\bm u)\},\ \ \\
    &\epi_B(\funcf) \define \{(\bm u,\y) \in B \times \R: \y \ge \funcf(\bm u)\}.%
\end{align*}
In the following, we introduce MIP formulations as we will use them to represent these sets
as well as the different notions of the strength of an MIP formulation explored in this work.

\label{sec:fstrength}

We will study mixed-integer linear sets, so-called \emph{mixed-integer programming (MIP) formulations},
of the form
\begin{equation*}
    \PIP \define \{(\bm u, \bm v, \bm z)
		\in \R^{d + 1} \times [0, 1]^p \times \{0, 1\}^q :
		A (\bm u, \bm v, \bm z) \leq b\}
\end{equation*}
for some matrix~$A$ and vector~$b$ of suitable dimensions.
The \emph{linear programming (LP) relaxation} or \emph{continuous relaxation} $ \PLP $ of $ \PIP $
is given by
\begin{equation*}
    \PLP \define \{(\bm u, \bm v, \bm z)
		\in \R^{d + 1} \times [0, 1]^p \times [0, 1]^q :
		A (\bm u, \bm v, \bm z) \leq b\}.
\end{equation*}
We will often focus on the projections of these sets onto the variables $ \bm u $, \ie
\begin{equation}
    \proj_{\bm u}(\PIP) \define \{\bm u \in \R^{d + 1} :
        \exists (\bm v, \bm z) \in [0, 1]^p \times \{0, 1\}^q
	    \quad
	    \st
	    \quad (\bm u, \bm v, \bm z) \in \PIP\}. 
\end{equation}
The corresponding \emph{projected linear relaxation} $ \proj_{\bm u}(\PLP) $ onto the $ \bm u $-space 
is defined accordingly.

In order to assess the quality of an MIP formulation,
we will work with several possible measures of formulation strength.
First, we define notions of sharpness,
as in \cite{Beach2020-compact,Huchette:2018}.
These relate to the tightness of the LP relaxation of an MIP formulation.
Whereas properties such as total unimodularity
guarantee an LP relaxation to be a complete description for the mixed-integer points in the full space,
we are interested here in LP relaxations that are tight description of the mixed-integer points
in the projected space.  
\begin{definition}
\label{def:sharp}
    We say that the MIP formulation $ \PIP $ is \emph{sharp} if
    \begin{equation*}
        \proj_{\bm u}(\PLP) = \conv(\proju(\PIP)).
    \end{equation*}
    holds. Further, we call it \emph{hereditarily sharp} if,
    for all $ I \subseteq \lrbr{L} $ and $ \hat{\bm z} \in \{0, 1\}^{|I|} $, we have
    \begin{equation*}
        \proj_{\bm u}(\PLP|_{\bm{z}_I = \hat{\bm z}})
            = \conv\left(\proj_{\bm u}(\PIP|_{\bm{z}_I = \hat{\bm z}})\right).
    \end{equation*}
\end{definition}
Sharpness expresses a tightness at the root node of a branch-and-bound tree.  
Hereditarily sharp means that fixing any subset of binary variables to~$0$ or~$1$
preserves sharpness, and therefore this means sharpness is preserved
throughout a branch-and-bound tree. 

In this article, we study certain non-polyhedral sets $ U \subseteq \R^{d + 1} $
and will develop MIP formulations~$ \PIP $ to form relaxations of~$U$ in the projected space,
as defined in the following.
\begin{definition}
	\label{def:mipmodel_relaxation}
	For a set $ U \subseteq \R^{d + 1} $
	we say that an MIP formulation~$ \PIP $
	is an \emph{MIP relaxation} of~$U$ if
	\begin{equation*}
    	U \subseteq \proj_{\bm u}(\PIP) .
	\end{equation*}
\end{definition}
Given a function $ \funcf\colon [0, 1]^d \to \R $, we will mostly consider
\begin{equation*}
    U = \gra_{[0, 1]^d}(\funcf) \subseteq \R^{d + 1}.
\end{equation*}
In particular, we will focus on either
\begin{equation*}
    U = \{(x, \y) \in [0, 1]^2 : \y = x^2\}
        \quad \text{or} \quad U = \{(x, y, \z) \in [0, 1]^2 : \z = xy\}.
\end{equation*}
We now define several quantities to measure the error of an MIP relaxation.
\begin{definition}
    \label{def:errors}
    For an MIP relaxation $ \PIP $ of a set $ U \subseteq \R^{d + 1} $,
    let $ \bar{\bm u} \in \proju(\PIP) $.
    We then define the \emph{pointwise error} of $ \bar{\bm u} $ as
    \begin{equation*}
        \mathcal E(\bar{\bm u}, U) \define \min\{\abs{\bm u_{d + 1} - \bar {\bm u}_{d + 1}} :
            \bm u \in U, {\bm u}_{\lrbr{d}} = \bar{\bm u}_{\lrbr{d}}\}.
    \end{equation*}
    This enables us to define the following two error measures for $ \PIP $ \wrt $U$:
    \begin{enumerate}
        \item The \emph{maximum error} of $ \PIP $ \wrt $U$ is defined as
            \begin{equation*}
                \mathcal E^{\max}(\PIP, U) \define \max_{\bm u \in \proju(\PIP)}  \mathcal E(\bm u, U).
            \end{equation*}
        \item The \emph{average error} of $ \PIP $ \wrt $U$
            is defined as
            \begin{equation*}
                \mathcal E^{\text{avg}}(\PIP, U) \define \vol(\PIP \setminus U).
            \end{equation*}
    \end{enumerate}
\end{definition}
Via integral calculus, the second, volume-based error measure
can be interpreted as the average pointwise error of all points $ \bm u \in \proju(\PIP) $.
Note that whenever the volume of~$U$ is zero (\ie it is a lower-dimensional set),
the average error just reduces to the volume of~$ \PIP $.

Both of the defined error quantities for an MIP relaxation~$ \PIP $
can also be used to measure the tightness of the corresponding LP relaxation~$ \PLP $.
In \Cref{sec:theory}, we use these to compare formulations
when $ \PLP $ is not sharp.

\section{Core Relaxations}
\label{sec:form-core}%

In the definition of the MIP relaxations studied in this work, 
we will frequently consider equations of the form~$\z=xy$
for continuous or integer variables~$x$ and~$y$
within certain bounds $ D_x \define [\xmin, \xmax] $
and $ D_y \define [\ymin, \ymax] $, respectively.
To this end, we will often use the function $ F\colon D \to \R,\, F(x, y) = xy $,
$ D \define D_x \times D_y $,
and refer to the set of feasible solutions to the equation $ \z = xy $
via the graph of~$F$, \ie $ \gra_D(F) = \{(x, y, \z) \in D \times \R: \z = xy\} $.
In order to simplify the exposition, we will, for example, often write $ \gra_D(xy) $
or refer to a relaxation of the equation $ \z = xy $ instead of $ \gra_D(F) $.
We will do this similarly for
the univariate function $ f\colon D_x \to \R,\, f(x) = x^2 $
and equations of the form $ \y = x^2 $, for example.
For inequalities, like $\z\geq xy$ or $\z=x^2$, we can use the epigraph.

Furthermore,
we repeatedly make use of several ``core'' formulations
for specific sets of feasible points.
They are introduced in the following.

\subsection{McCormick Envelopes}
\label{sssec:McCormick}

The convex hull of the equation $ \z = xy $ for $ (x, y) \in D $
is given by a set of linear equations known as the McCormick envelope, see \cite{McCormick1976}:
\begin{tcolorbox}[colback = white]
\begin{equation}
\mathcal{M}(x, y) \define \left\{(x, y, \z) \in [\xmin, \xmax] \times [\ymin, \ymax] \times \R : 
    \eqref{eq:McCormick}
    \right\}.
    \label{eq:McCormick-set}
\end{equation}
\begin{equation}
    \begin{aligned}
          &\xmin \cdot y + x \cdot \ymin - \xmin \cdot \ymin \leq& \z &&\le \xmax \cdot y + x \cdot \ymin - \xmax \cdot \ymin,\\
           &\xmax \cdot y + x \cdot \ymax - \xmax \cdot \ymax \leq& \z &&\le \xmin \cdot y + x \cdot \ymax - \xmin \cdot \ymax.
    \end{aligned}
    \label{eq:McCormick}
\end{equation}
\end{tcolorbox}
In case one of the variables, here $ \beta $, is binary,
the McCormick envelope of $ \z = x \beta $ simplifies to
\begin{tcolorbox}[colback = white]
\begin{equation}
\mathcal{M}(x, \beta) = \left\{(x, \beta, \z) \in [\xmin, \xmax] \times [0, 1] \times \R : 
    \eqref{eq:McCormick-bin}
    \right\}.
    \label{eq:McCormick-bin-set}
\end{equation}
\begin{equation}
    \label{eq:McCormick-bin}
    \begin{aligned}
         \xmin \cdot \beta \le  & \, \z \le \xmax\cdot \beta, \\
  x - \xmax\cdot (1-\beta)  \le &\, \z \le  x - \xmin\cdot (1 - \beta).
    \end{aligned}
\end{equation}
\end{tcolorbox}
For univariate continuous quadratic equations $ \y = x^2 $, it simplifies to
\begin{tcolorbox}[colback = white]
\begin{equation}
\mathcal{M}(x, x) = \left\{(x, \y) \in [\xmin, \xmax] \times \R : 
    \eqref{eq:McCormick-sq}
    \right\}.
    \label{eq:McCormick-set-sq}
\end{equation}
\begin{equation}
    \label{eq:McCormick-sq}
    \begin{aligned}
         \y &\ge 2\xmin \cdot x - \xmin^2,\\
         \y &\ge 2\xmax \cdot x - \xmax^2,\\  
         \y &\le x (\xmax + \xmin) - \xmax \cdot \xmin.
    \end{aligned}
\end{equation}
\end{tcolorbox}
\subsection{Sawtooth-Based MIP Formulations}
\label{ssec:Sawtooth}

Next, we state an MIP relaxation for equations of the form $ \y \geq x^2 $
that requires only logarithmically-many auxiliary variables and constraints in the number of linear segments. 
It makes use of an elegant \pwl formulation
for $ \gra_{[0, 1]}(x^2) $ from \cite{Yarotsky-2016}
using the recursively defined \emph{sawtooth} function presented in \cite{Telgarsky2015}
to formulate the approximation of $ \gra_{[0, 1]}(x^2) $,
as described in \cite{Beach2020-compact}.
We will use this formulation to further strengthen the relaxation of $\z=x^2$ by \NMDT or \DNMDT.
To this end, we define a formulation parameterized by the depth~$ L \in \N $:
\begin{tcolorbox}[colback = white]
\begin{equation}
    S^L \define \Set{(x, \bm{g}, \bm{\alpha}) \in [0, 1] \times [0, 1]^{L + 1} \times \{0, 1\}^L :
        \eqref{eqn:sawtooth-formulation}}
\end{equation}
\begin{equation}
    \label{eqn:sawtooth-formulation}
    \begin{array}{rll}
        g_0 &= x\\
        2(g_{j - 1} - \alpha_j) &\le g_j \le 2 g_{j - 1} &\quad j = 1, \ldots, L,\\
        2(\alpha_j - g_{j - 1}) &\le g_j \le 2(1 - g_{j - 1}) &\quad j = 1, \ldots, L.
    \end{array}
\end{equation}
\end{tcolorbox}
Note that, by construction in \cite{Yarotsky-2016,Beach2020-compact},
$ S^L $ is defined such that when $ \bm \alpha \in \{0, 1\}^L $,
the relationship between~$ g_j$ and $ g_{j - 1} $
is $ g_j = \min\{2g_{j - 1}, 2(1 - g_{j - 1})\} $ for $ j = 1, \ldots, L $,
which means that it is given by the ``tooth'' function
$ G\colon [0, 1] \to [0, 1],\, G(x) = \min\{2x, 2(1 - x)\} $.
Therefore, each~$ g_j $ represents the output of a ``sawtooth'' function of~$x$,
as described in \cite{Yarotsky-2016,Telgarsky2015},
\ie when $ \bm \alpha \in \{0, 1\}^L $, we have
\begin{equation}
    g_j = G^j(x) \quad \text{for } G^j \define \underbrace{G \circ G \circ \ldots \circ G}_j.
    \label{eq:g}
\end{equation}
Now, we define the function $ \capF^L \colon [0, 1] \to [0, 1]$,
\begin{equation}
    \capF^L(x) \define x - \sum_{j = 1}^L 2^{-2j} G^j(x),
    \label{eq:def_F^j}
\end{equation}
which is a close approximation to~$ x^2 $. 

\noindent Using the relationships~\eqref{eq:g} and~\eqref{eq:def_F^j} between~$x$ and~$ \bm g $,
any constraint of the form $ \y = x^2 $ can be approximated via the function
\begin{tcolorbox}[colback = white]
$ \F^L\colon [0, 1] \times [0, 1]^{L + 1} \to [0, 1] $, 
    \begin{equation}
        \F^L(x, \bm g) = x - \sum_{j = 1}^L 2^{-2j} g_j,\quad \text{for an integer } L \geq 0.
        \label{eq:FL}
    \end{equation}
\end{tcolorbox}
\noindent

Now, we consider the LP relaxation of~$ S^L $,
where each variable~$ \alpha_j $ is relaxed to the interval $ [0, 1] $.
Then, via the constraints~\eqref{eqn:sawtooth-formulation},
we see that the weakest lower bounds on each~$ g_j $ \wrt $ g_{j - 1} $
can be attained via setting $ \alpha_j = g_{j - 1} $, yielding a lower bound of~$0$.
Thus, after projecting out $ \bm \alpha $,
the LP relaxation of $ S^L $ in terms of just~$x$ and~$ \bm g $
can be stated as 
\begin{tcolorbox}[colback = white]
\begin{equation*}
    T^L = \Set{(x, \bm{g}) \in [0, 1] \times [0, 1]^{L + 1}:
        \eqref{eqn:sawtooth-formulation-LP-constr}},
\end{equation*}
\begin{equation} \label{eqn:sawtooth-formulation-LP-constr}
\begin{array}{rll}
        g_0 &= x\\
        g_j &\le 2(1 - g_{j - 1}) & \quad j = 1, \ldots, L\\
        g_j &\le 2g_{j - 1} & \quad j = 1, \ldots, L.
\end{array} 
\end{equation}
\end{tcolorbox}

The LP relaxation $T^L$ is sharp by \cite[Theorem 1]{Part_I}.
Thus, $T^L$
yields the same lower bound on~$ \y $ as the MIP formulation $S^L$
due to sharpness and the convexity of~$ F^L $.
This allows us to define an LP outer approximation
for inequalities of the form $ \y \geq x^2 $:
\begin{definition}[Sawtooth Epigraph Relaxation, SER]
    \label{def:sawtooth-epi}
    Given some $ L \in \N $,
    the \emph{depth-$L$ sawtooth epigraph relaxation} for $ \y \geq x^2 $
    on the interval $ x \in [0, 1] $ is given by
    \begin{tcolorbox}[colback = white]
        \begin{equation}
            \label{eq:sawtooth-epi-relax}
            Q^L \define \Set{(x, \y) \in [0, 1] \times \R : \exists \bm g \in [0, 1]^{L + 1} :
                \eqref{eq:sawtooth-epi-relax-constr}},
        \end{equation}
        \begin{equation}
            \label{eq:sawtooth-epi-relax-constr}
            \begin{array}{rll}
                \y &\ge \F^j(x, \bm g) - 2^{-2j - 2} & \quad j = 0, \ldots, L\\
                \y &\ge 0,\quad
                \y \ge 2x - 1\\
                (x, \bm{g}) &\in T^L.
            \end{array}
        \end{equation}
    \end{tcolorbox}
\end{definition}

\begin{figure}
    \centering
    \begin{tikzpicture}
\pgfplotsset{%
    width=0.55\textwidth,
}
\definecolor{color0}{rgb}{0.12156862745098,0.466666666666667,0.705882352941177}
\definecolor{color1}{rgb}{1,0.498039215686275,0.0549019607843137}
\definecolor{color2}{rgb}{0.172549019607843,0.627450980392157,0.172549019607843}
\definecolor{color3}{rgb}{0.83921568627451,0.152941176470588,0.156862745098039}
\definecolor{color4}{rgb}{0.580392156862745,0.403921568627451,0.741176470588235}

\begin{axis}[
legend cell align={left},
legend style={fill opacity=0.8, draw opacity=1, text opacity=1, 
at={(2.2,0.8)}, %
draw=white!80!black},
tick align=outside,
tick pos=left,
x grid style={white!69.0196078431373!black},
xmin=-0.05, xmax=1.05,
xtick style={color=black},
xtick={0,0.125,0.25,0.375,0.5,0.625,0.75,0.875,1},
xticklabels={0,\(\displaystyle {1}/{8}\),\(\displaystyle {2}/{8}\),\(\displaystyle {3}/{8}\),\(\displaystyle {4}/{8}\),\(\displaystyle {5}/{8}\),\(\displaystyle {6}/{8}\),\(\displaystyle {7}/{8}\),1},
y grid style={white!69.0196078431373!black},
ymin=-0.1, ymax=1.1,
ytick style={color=black},
ytick={0,0.125,0.25,0.375,0.5,0.625,0.75,0.875,1},
yticklabels={0,\(\displaystyle {1}/{8}\),\(\displaystyle {2}/{8}\),\(\displaystyle {3}/{8}\),\(\displaystyle {4}/{8}\),\(\displaystyle {5}/{8}\),\(\displaystyle {6}/{8}\),\(\displaystyle {7}/{8}\),1}
]
\path [draw=white!50.1960784313725!black, fill=white!50.1960784313725!black]
(axis cs:0,0)
--(axis cs:0,0)
--(axis cs:0.125,0)
--(axis cs:0.25,0.0625)
--(axis cs:0.375,0.125)
--(axis cs:0.5,0.25)
--(axis cs:0.625,0.375)
--(axis cs:0.75,0.5625)
--(axis cs:0.875,0.75)
--(axis cs:1,1)
--(axis cs:1,1)
--(axis cs:1,1)
--(axis cs:0,1)
--(axis cs:0,0)
--cycle;
\addlegendimage{area legend, draw=white!50.1960784313725!black, fill=white!50.1960784313725!black}
\addlegendentry{SER with $L=1$}

\addplot [semithick, color0]
table {%
0 -0.25
1 0.75
};
\addlegendentry{$F^0 - 2^{-2}$}
\addplot [semithick, color1]
table {%
0 -0.0625
0.5 0.1875
1 0.9375
};
\addlegendentry{$F^1 - 2^{-4}$}
\addplot [semithick, color3]
table {%
0 0
1 0
};
\addlegendentry{$0$}
\addplot [semithick, color4]
table {%
0 -1
1 1
};
\addlegendentry{$2x-1$}
\end{axis}

\end{tikzpicture}
    \begin{tikzpicture}
\pgfplotsset{%
    width=0.55\textwidth,
}
\definecolor{color0}{rgb}{0.12156862745098,0.466666666666667,0.705882352941177}
\definecolor{color1}{rgb}{1,0.498039215686275,0.0549019607843137}
\definecolor{color2}{rgb}{0.172549019607843,0.627450980392157,0.172549019607843}
\definecolor{color3}{rgb}{0.83921568627451,0.152941176470588,0.156862745098039}
\definecolor{color4}{rgb}{0.580392156862745,0.403921568627451,0.741176470588235}

\begin{axis}[
legend cell align={left},
legend style={fill opacity=0.8, draw opacity=1, text opacity=1, 
at={(2.2,0.8)}, %
draw=white!80!black},
tick align=outside,
tick pos=left,
x grid style={white!69.0196078431373!black},
xmin=-0.05, xmax=1.05,
xtick style={color=black},
xtick={0,0.125,0.25,0.375,0.5,0.625,0.75,0.875,1},
xticklabels={0,\(\displaystyle {1}/{8}\),\(\displaystyle {2}/{8}\),\(\displaystyle {3}/{8}\),\(\displaystyle {4}/{8}\),\(\displaystyle {5}/{8}\),\(\displaystyle {6}/{8}\),\(\displaystyle {7}/{8}\),1},
y grid style={white!69.0196078431373!black},
ymin=-0.1, ymax=1.1,
ytick style={color=black},
ytick={0,0.125,0.25,0.375,0.5,0.625,0.75,0.875,1},
yticklabels={0,\(\displaystyle {1}/{8}\),\(\displaystyle {2}/{8}\),\(\displaystyle {3}/{8}\),\(\displaystyle {4}/{8}\),\(\displaystyle {5}/{8}\),\(\displaystyle {6}/{8}\),\(\displaystyle {7}/{8}\),1}
]
\path [draw=blue, fill=blue, opacity=0.2]
(axis cs:0,0)
--(axis cs:0,0)
--(axis cs:0.0625,0)
--(axis cs:0.125,0.015625)
--(axis cs:0.1875,0.03125)
--(axis cs:0.25,0.0625)
--(axis cs:0.3125,0.09375)
--(axis cs:0.375,0.140625)
--(axis cs:0.4375,0.1875)
--(axis cs:0.5,0.25)
--(axis cs:0.5625,0.3125)
--(axis cs:0.625,0.390625)
--(axis cs:0.6875,0.46875)
--(axis cs:0.75,0.5625)
--(axis cs:0.8125,0.65625)
--(axis cs:0.875,0.765625)
--(axis cs:0.9375,0.875)
--(axis cs:1,1)
--(axis cs:1,1)
--(axis cs:0,1)
--(axis cs:0,0)
--cycle;
\addlegendimage{area legend, draw=blue, fill=blue, opacity=0.2}
\addlegendentry{SER with $L=2$}

\addplot [semithick, color0]
table {%
0 -0.25
1 0.75
};
\addlegendentry{$F^0 - 2^{-2}$}
\addplot [semithick, color1]
table {%
0 -0.0625
0.5 0.1875
1 0.9375
};
\addlegendentry{$F^1 - 2^{-4}$}
\addplot [semithick, color2]
table {%
0 -0.015625
0.25 0.046875
0.5 0.234375
0.75 0.546875
1 0.984375
};
\addlegendentry{$F^2 - 2^{-6}$}
\addplot [semithick, color3]
table {%
0 0
1 0
};
\addlegendentry{$0$}
\addplot [semithick, color4]
table {%
0 -1
1 1
};
\addlegendentry{$2x-1$}
\end{axis}

\end{tikzpicture}
    \caption{The sawtooth epigraph relaxations $Q^L$ for $L=1$ and $L = 2$.
        By increasing~$ L $ , we tighten the lower bound by creating more inequalities.
        This is done by only adding linearly-many variables and inequalities in the extended formulation
        to gain exponentially-many equally spaced cuts in the projection.}
    \label{fig:SSR}
\end{figure}
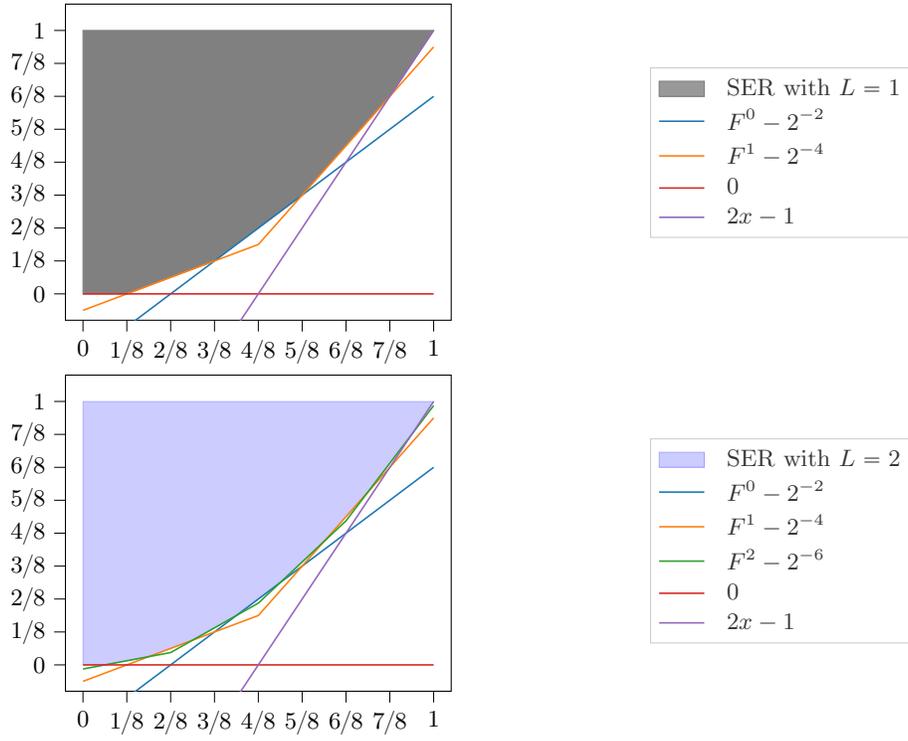
In \cite{Part_I} it is shown that
that the maximum error for the sawtooth epigraph relaxation is $ 2^{-2L - 4} $.

\section{MIP Relaxations for Non-Convex MIQCQPs}
\label{sec:direct}

In this section, we present MIP relaxations for bivariate equations of the form~$\z=xy$ and univariate equations of the form $\z = x^2$.
For convenience, we define a \emph{completely dense} MIQCQP
as an MIQCQP for which all terms of the form~$ x_i^2 $ and~$ x_i x_j $
appear in either the objective or in some constraint.

We proceed as follows.
First, we recall the well-known MIP relaxation technique \NMDT.
Then, we introduce an enhanced version of it, called \emph{\DNMDT},
which is designed to reduce the number of binary variables
required to reach the same level of approximation accuracy
compared to \emph{\NMDT}
for completely dense MIQCQPs.
Finally, we define the two \emph{tightened} variants of \NMDT and \DNMDT, for which we also incorporate the sawtooth epigrpaph relaxation~\eqref{eq:sawtooth-epi-relax} for all $\z=x_i^2$ terms. We call these methods T-\NMDT and T-\DNMDT, respectively.
We will mention the corresponding maximum errors of the presented MIP relaxations and derive them in detail in \cref{ssec:max_error}.

\subsection{Base-2 NMDT}
\label{ssec:direct-nmdt}
The Normalized Multiparametric Disaggregation Technique (\NMDT) was introduced
by Castro~\cite{castro2015-nmdt}. Later it was used in \cite{Beach2020-compact,Beach2020}.
along with its univariate form (see \cite[Appendix A]{Beach2020-compact}).
While in \cite{castro2015-nmdt} a base of 10 was chosen for the discretization, in \cite{Beach2020-compact,Beach2020} \NMDT is described with a base of 2. We use the latter here and provide both the bivariate and univariate definition of base-2 \NMDT according to~\cite{Beach2020-compact} here.

In \NMDT, the key idea for relaxing $ \z = xy $ is to discretize one variable,
\eg $x$, using binary variables $ \bm \beta \in \{0, 1\}^L $ and a residual term $ \Del x $
and then relaxing the resulting products $ \beta_i y $ and $ \Del x y $ using McCormick envelopes. 
The following derivation of \NMDT can be transferred one-to-one to bases different to~$2$.
We start with the base-2 discretization of the variable~$x$:
\begin{equation*}
    x = \dsum_{j = 1}^L 2^{-j} \beta_j + \Del x.
\end{equation*}
Then we multiply by $y$ to obtain the exact representation
\begin{equation}
    \label{eq:NMDT-xy-exact}
    \begin{array}{rll}
         x &= \dsum_{j = 1}^L 2^{-j} \beta_j + \Del x,\,
         \z = \dsum_{j = 1}^L 2^{-j} \beta_j y + \Del x y\\
         \Del x &\in [0, 2^{-L}],\,
         \bm \beta \in \{0, 1\}^L.
    \end{array}
\end{equation}
Next, we use McCormick envelopes to model all remaining product terms,
$ \beta_i y $ and $ \Del x \cdot y $, to obtain the final formulation.
\begin{definition}[NMDT, \cite{castro2015-nmdt}]
    The MIP relaxation \emph{NMDT} of $ \z = xy $ with $ x \in [0, 1] $, $ y \in [0, 1] $
    and a depth of $ L \in \N $ is defined as follows:
    \begin{tcolorbox}[colback = white]
        \begin{equation}
            \label{eq:NMDT}
            \begin{array}{rll}
                x &= \dsum_{j = 1}^L 2^{-j}\beta_j + \Del x\\
                \z &= \dsum_{j = 1}^L 2^{-j}u_j + \Del z\\
                (y, \beta_j, u_j) &\in \mathcal{M}(y, \beta_j) & j = 1, \ldots, L\\
                (\Del x, y, \Del z) &\in \mathcal{M}(\Del x, y)\\
                \Del x &\in [0, 2^{-L}], \quad
                y \in [0, 1], \quad
                \bm \beta \in \{0, 1\}^L.
            \end{array}
        \end{equation}
    \end{tcolorbox}
\end{definition}
Since McCormick envelopes are exact reformulations of the variable products
if at least one of the variables is required to be binary,
the maximum error of \NMDT with respect to $ \z = xy $
is purely due to the McCormick relaxation of $ \Del z = \Del x \cdot y $,
with a value of $ 2^{-L - 2} $.

An advantage of the \NMDT approach compared to the separable formulations from Part I %
is that it requires fewer binary variables
to reach the desired level of accuracy for \emph{bipartite} MIQCQPs,
for which the quadratic part in each constraint is of the form~$ \bm x^T Q \bm y $.
This is due to the fact that one has only to discretize either $ \bm x \in \R^n $ or $ \bm y \in \R^m $. 
Thus, to reach a maximum error of $ 2^{-2L - 2} $ for each bilinear term,
\NMDT requires only $ 2L\min\{m, n\} $ binary variables
instead of the $ L(m + n) $ variables
required by the approaches \DNMDT (see \Cref{ssec:direct-dnmdt}) or \HybS (from Part I)g.
In contrast, \NMDT requires twice the number of binary variables
to reach the same level of accuracy if all quadratic terms~$ x_i x_k $ and~$ x_l^2 $
with $ k = 1, \ldots, n $ and $ l = 1, \ldots, m $ must be modelled,
for example if~$ Q$ is dense, see \cref{tab:full-char}.

Next, we show how to model univariate quadratic equations $ \y = x^2 $ with the \NMDT technique:
\begin{definition}[Univariate NMDT (\cite{castro2015-nmdt})]
\label{def:nmdt-univ}
The MIP relaxation \emph{NMDT} of $ \y = x^2 $ with $ x \in [0, 1] $
and a depth of $ L \in \N $ is defined as follows:
\begin{tcolorbox}[colback = white]
\begin{equation}
    \label{eq:NMDT-xsq}
    \begin{array}{rll}
         x &= \dsum_{j = 1}^L 2^{-j}\beta_j + \Del x\\
         \y &= \dsum_{j = 1}^L 2^{-j}u_j + \Del \y\\
         (x, \beta_j, u_j) &\in \mathcal{M}(x, \beta_j) & j = 1, \ldots, L\\
         (\Del x, x, \Del \y) &\in \mathcal{M}(\Del x, x)\\
         \Del x &\in [0, 2^{-L}], \quad
         x \in [0, 1], \quad
         \bm \beta \in \{0, 1\}^L.
    \end{array}
\end{equation}
\end{tcolorbox}
\end{definition}
Note that for any depth $L$, the univariate formulation NMDT
yields a maximum error of slightly less than $ 2^{-L - 2} $ instead of the $ 2^{-2L - 2} $
in the sawtooth relaxation from \cite{Part_I}.
Further, the formulation NMDT is not sharp.
For example at $ x = \tfrac 12 $, its LP relaxation
admits the solution $ \beta_j = \tfrac 12 $ for all $ j \in \lrbr{L} $,
$ \Del x = 2^{-L - 1} $, $ u_j = 0 $ for all $ j \in \lrbr{L} $,
$ \Del \y = 0 $ and $ \y = 0 $,
which is not in the convex hull of $ \gra_{[0, 1]}(x^2) $.

However, we can tighten the lower bound on $\z$ in~\eqref{eq:NMDT-xsq}
by adding the sawtooth epigraph relaxation~\eqref{eq:sawtooth-epi-relax}
of depth~$ L_1 $ (with $ L_1 \ge L$),
\ie $ (x, \y) \in Q^{L_1}$. 
We refer to \NMDT with this lower-bound tightening for univariate quadratic terms as \emph{T-NMDT}.
\begin{definition}[Univariate T-NMDT]
    \label{def:T-nmdt-univ}
    The MIP relaxation \emph{T-NMDT} of $ \y = x^2 $ with $ x \in [0, 1] $
    and a depth of $ L, L_1 \in \N $ with $ L_1 \geq L $ is defined as follows:
    \begin{tcolorbox}[colback = white]
        \begin{equation}
            \label{eq:TNMDT-xsq}
            \begin{array}{rll}
                (x, \Delta_x^L, \y, \Delta_\y^L, \bm u, \bm \beta) \text{ satisfy } \eqref{eq:NMDT-xsq}\\
                (x, \y) \in Q^{L_1}.
            \end{array}
        \end{equation}
    \end{tcolorbox}
\end{definition}

\subsection{Doubly Discretized NMDT}
\label{ssec:direct-dnmdt}

The key idea behind the novel MIP relaxation \emph{Doubly Discretized NMDT (\DNMDT)} for $ \z = xy $
is to further increase the accuracy of NMDT by discretizing the second variable~$y$ as well,
which leads to a \emph{double} NMDT substitution,
namely in the $ \Del x y $-term. 
In this way, for problems where NMDT would require discretizing all $ x_i $-variables,
\eg if we have some dense constraint,
we can double the accuracy of the relaxation for the equations $ \z_{ij} = x_i x_j $
without adding additional binary variables
by taking advantage of the fact that both variables are discretized anyway.
In NMDT, we could choose to discretize either $x$ or $y$ for each equation of the form $ \z = xy $.
For \DNMDT, we consider both options of discretization, and then,
by introducing a parameter $ \lambda \in [0, 1] $,
we can model a hybrid version of the two resulting MIP relaxations.
Namely, we write
\begin{equation*}
    xy = \lambda xy + (1 - \lambda) xy,
\end{equation*}
then discretize $y$ first in the relaxation of $ \lambda xy $
and $x$ first in the relaxation of $ (1 - \lambda) xy $. 
Finally, the complete MIP relaxation \DNMDT
is obtained by relaxing the resulting products 
via McCormick envelopes
(see \Cref{ssec:derivations} for the detailed derivation).
\begin{definition}[D-NMDT]
    The MIP relaxation \emph{D-NMDT} of $ \z = xy $ with $ x, y \in [0, 1] $, a depth of $ L \in \N $
    and the parameter $ \lambda \in [0, 1] $ is defined as follows:
    \begin{tcolorbox}[colback = white]
        \begin{equation}
            \label{eq:D-NMDT}
            \begin{array}{rll}
                x &= \dsum_{j = 1}^L 2^{-j}\beta^x_j + \Del x, \quad
                y = \dsum_{j = 1}^L 2^{-j}\beta^y_j + \Del y\\
                \z &= \dsum_{j = 1}^L 2^{-j} (u_j + v_j) + \Del z\\
                \left(\lambda \Del y + (1 - \lambda) y, \beta^x_j, u_j\right) &\in \mathcal{M}\left(\lambda \Del y + (1 - \lambda) y, \beta^x_j\right) \ \ \  j = 1, \ldots, L\\
                \left((1 - \lambda) \Del x + \lambda x, \beta^y_j, v_j\right) &\in \mathcal{M}\left((1 - \lambda) \Del x + \lambda x, \beta^y_j\right) \ \ \   j = 1, \ldots, L\\
                \left(\Del x, \Del y, \Del z\right) &\in \mathcal{M}\left(\Del x, \Del y\right)\\
                \Del x, \Del y &\in [0, 2^{-L}], \quad
                x, y \in [0, 1], \quad
                \bm \beta^x, \bm \beta^y \in \{0, 1\}^L.
            \end{array}
        \end{equation}
    \end{tcolorbox}
\end{definition}
As McCormick envelopes are exact reformulations of bilinear products
if one of the variables is binary,
we only make an error
in the relaxation of the continuous variable product $ \Del x \Del y $.
This yields a maximum error of $ 2^{-2L - 2} $ for \DNMDT.
For bounds on the terms $ (1 - \lambda) \Del x + \lambda x $
and $ \lambda \Del y + (1 - \lambda) y $,
see \Cref{app:gen-bnds}.
\begin{remark}
    For our implementation of the \DNMDT technique used in \Cref{sec:computations},
    we set $\lambda = \tfrac 12 $
    for the sake of formulation symmetry in~$x$ and~$y$.
\end{remark}

To model the univariate quadratic terms with this method,
we set $ y = x $ in $ \z = xy $ and get an MIP relaxation for $ \z = x^2 $,
The resulting MIP relaxation is stronger than the univariate \NMDT approach from \Cref{def:nmdt-univ}, which we will prove later.
\begin{definition}[Univariate D-NMDT]
\label{def:D-NMDT-univ}
The MIP relaxation \emph{D-NMDT} of $ \y = x^2 $ with $ x \in [0, 1] $
and a depth of $ L \in \N $ is defined as follows:
    \begin{tcolorbox}[colback = white]
        \begin{equation}
            \label{eq:D-NMDT-xsq}
            \begin{array}{rll}
                 x &= \dsum_{j = 1}^L 2^{-j}\beta_j + \Del x\\
                 \y &= \dsum_{j = 1}^L 2^{-j} u_j + \Del \y\\
                 (\Del x + x, \beta_j, u_j) &\in \mathcal{M}(\Del x + x, \beta_j) &\quad j = 1, \ldots, L\\
                 (\Del x, \Del \y) &\in \mathcal{M}(\Del x, \Del x)\\
                 \Del x &\in [0, 2^{-L}], \quad
                 x \in [0, 1], \quad
                 \bm \beta \in \{0, 1\}^L.
            \end{array}
        \end{equation}
    \end{tcolorbox}
\end{definition}
Again, as McCormick envelopes are exact reformulations of bilinear products
if one of the variables is required to be binary,
we only make an error
in the relaxation of the continuous variable product $ \Del x \Del x $.
This yields a maximum error of $ 2^{-2L - 2} $ for univariate \DNMDT.
Note that the upper bound of this formulation
is formed by exactly the same \pwl approximation for $ \y = x^2 $
as the sawtooth formulations.
Unfortunately, the univariate \DNMDT is not sharp;
for example, at $ x = \tfrac 12$, its LP relaxation
admits the solution $ \beta_j = \tfrac 12~$ for all $ j \in \lrbr{L} $, $ \Del x = 2^{-L - 1} $,
$ \Del \y = 0 $, $ u_j = 0 $ for all $ j \in \lrbr{L} $ and $ \y = 0 $,
which is not in the convex hull of $ \gra_{[0, 1]}(x^2) $. 

To formulate a tightened version of \DNMDT,
we tighten the lower bound on $\z$ in~\eqref{eq:D-NMDT-xsq},
by removing all McCormick lower bounds
and adding the sawtooth epigraph relaxation~\eqref{eq:sawtooth-epi-relax}
of depth $ L_1 $ (with $ L_1 \ge L$).
\begin{definition}[Univariate T-D-NMDT]
    \label{def:TD-NMDT-univ}
    The MIP relaxation \emph{T-\DNMDT} of $ \y = x^2 $
    with $ x \in [0, 1] $ and depths $ L, L_1 \in \N $
    with $ L_1 \geq L $ is defined as follows:
    \begin{tcolorbox}[colback = white]
        \begin{equation}
            \label{eq:TD-NMDT-xsq}
            \begin{array}{rll}
                \begin{array}{rll}
                    (x, \Delta_x^L, \y, \Delta_\y^L, \bm u, \bm \beta) \text{ satisfy } \eqref{eq:D-NMDT-xsq}\\
                    (x, \y) \in Q^{L_1}.
                \end{array}
            \end{array}
        \end{equation}
    \end{tcolorbox}
\end{definition}

In \Cref{tab:full-char} in \Cref{sec:theory}, we give a summary
of the number of binary variables and constraints as well as the accuracy of each MIP relaxation
when applied to a dense MIQCQP of the form~\eqref{eqn:generic-problem}.
\begin{remark}[Binary Variables and Dense MIQCQPs]
    When modelling Problem~\eqref{eqn:generic-problem}
    using the MIP relaxations \NMDT and \DNMDT, for each variable $x_i$, we will need a discretization
    of the form $ x_i = \sum_{j = 1}^L 2^{-j}\beta_j + \Delta^L_{x_i} $ with $ \beta \in \{0, 1\}^L $.
    Thus, both of these formulations use $nL$~binary variables in the case of a dense MIQCQP.
    However, the improved binarizations in \DNMDT reduces the errors exponentially compared to \NMDT.
    
    Note that it is possible that some preprocessing or reformulation,
    such as via a convex quadratic reformulation (QCR)
    may improve the number of binary variables needed.
    We do not use such reformulations in this work,
    but just focus on applying our MIP relaxations as is.
\end{remark}

\section{Theoretical Analysis}
\label{sec:theory}
In this section, we give a theoretical analysis of the presented MIP relaxations
for the equation $ \z = xy $ over $ x, y \in [0, 1] $
as well as the equation $ \z = x^2 $ over $ x \in [0, 1] $, respectively,
in order to allow for a comparison of structural properties between them.
In particular, we  analyze their maximum error, \averageErrorWidths,
formulation strengths, \ie (hereditary) sharpness and LP relaxation volumes,
as well as the optimal placement of breakpoints to minimize \averageErrorWidths.
Our results are summarized in \cref{tab:full-char}, which also includes the results for the separable methods \HybS, \morsireform, and \zellmerreform from Part I \cite{Part_I}.
\begin{table}[h]
    \centering
    \def\arraystretch{1.5}
    \begin{tabular}{ccccc}
        \toprule
        MIP relax. &  \# Bin.\ variables & \# Constraints & Max.\ err. & Avg.\ err. width\\
        \midrule
        NMDT & $nL$ & $n (\tfrac 12 (5n+7) + 2(n+1)L)$ & $2^{-L-2}$ & $\tfrac 16 2^{-L}$\\
        \midrule
        \DNMDT & $nL$ & $n(\tfrac 12(5n+5) + 4nL)$ & $2^{-2L-2}$ & $\tfrac 16 2^{-2L}$\\
        \midrule
        \HybS & $nL$ & $n(\tfrac 12(5n-3) + 2n(L+L_1))$ & $2^{-2L-2}$ & $\tfrac 13 2^{-2L}$\\
        \midrule
        \morsireform & $\tfrac{1}{2}(n^2 + 1)L$ & $n(\tfrac 12(3n-1) + (n+1)(L+L_1))$ & $2^{-2L-1}$ & $\tfrac{1}{2}2^{-2L}$\\
        \midrule
        \zellmerreform & $\tfrac{1}{2}(n^2 + 1)L$ & $n(\tfrac 12(3n-1) + (n+1)(L+L_1))$ & $2^{-2L-1}$ & $\tfrac{1}{2}2^{-2L}$\\
       \bottomrule
    \end{tabular}
    \caption{A summary of characteristics of the different MIP relaxations for $\z=xy$.
        Binary variables and constraints are given in the worst-case,
        in which every possible quadratic term is modelled,
        for example if some matrix~$ Q_i $ is dense.
        The \averageErrorWidths for \HybS, \morsireform and \zellmerreform
        with respect to $ \gra_{[0, 1]^2}(xy) $ are calculated for $ L_1 \to \infty $
        and without the McCormick envelopes added.
        Finally, the \averageErrorWidths for \morsireform and \zellmerreform apply only to $ L \ge 1 $;
        the corresponding volumes are $ \tfrac{7}{12} $ for $ L = 0 $.
        Finite~$L_1$ leads to slightly increased error bounds
        for the methods \morsireform, \zellmerreform and \HybS.}%
    \label{tab:full-char}
\end{table}

\subsection{Maximum Error}
\label{ssec:max_error}
We start by discussing the maximum errors.
We will derive the maximum errors of the \NMDT-based formulations by reducing the error calculations to the error of a single McCormick relaxation per grid piece.
In general, for the equation $ \z = xy $
over a grid piece $ [\xmin, \xmax] \times [\ymin, \ymax] $,
the maximum under- and overestimation is $ \tfrac{1}{4}(\xmax - \xmin)(\ymax - \ymin) $,
attained at $ (x, y) = (\tfrac{1}{2}(\xmin + \xmax), \tfrac{1}{2}(\ymin + \ymax)) $,
see \eg \cite[page 23]{Linderoth:2005}.

For \NMDT, to show that the maximum error can be computed from a single McCormick relaxation, we fix $ \bm \beta \in \{0, 1\}^L $  in \cref{eq:NMDT}
and observe two facts: 
(1) we get $x = k 2^{-L} + \Del x$ for some integer $k$ and therefore $x$ varies only with $ \Delta_x^L \in [0, 2^{-L}]$,
and (2) the McCormick relaxation $ (y, \beta_i, u_i) \in \mathcal M(y, \beta_i) $ is exact
for each $ i = 1, \ldots, L $,
i.e., the relaxation equals $ u_i = y \beta_i $.
These two facts imply that the only error incurred on this small interval stems from the single McCormick relaxation~$ (\Del x, y, \Del z) \in \mathcal M(\Delta_x^L, y) $ over regions of the form  $(\Del x, y) \in [0, 2^{-L}] \times [0,1]$.
This yields a maximum error of $\tfrac 14(2^{-L} \cdot 1) = 2^{-L-2}$.
Similarly, for \DNMDT and univariate \NMDT and \DNMDT, one can also show that all errors come from the McCormick relaxations of the continuous error terms.
The maximum errors of the different MIP relaxations are listed in the following propositions.

\begin{proposition}
    The maximum error in the \NMDT MIP relaxation for $ \z = xy $
    with $ x, y \in [0, 1] $ is $\tfrac 14(2^{-L} \cdot 1) = 2^{-L-2}$.
\end{proposition}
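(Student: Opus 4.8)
The plan is to reduce the error of the whole formulation~\eqref{eq:NMDT} to the error of a single McCormick relaxation over a box of dimensions $2^{-L}\times 1$, exactly along the lines of the paragraph preceding the statement. First I would fix an arbitrary binary vector $\bm\beta\in\{0,1\}^L$ and examine all $(x,y,\z)$ feasible for~\eqref{eq:NMDT} with that $\bm\beta$. Set $\xhat\define\sum_{j=1}^L 2^{-j}\beta_j$, so that $x=\xhat+\Del x$ with $\xhat$ a fixed integer multiple of $2^{-L}$ and $\Del x$ ranging over the full interval $[0,2^{-L}]$. Since each $\beta_j$ is now a fixed element of $\{0,1\}$, the constraints $(y,\beta_j,u_j)\in\mathcal{M}(y,\beta_j)$ from~\eqref{eq:McCormick-bin} pin down $u_j=y\beta_j$ exactly (check the two cases $\beta_j=0$ and $\beta_j=1$ directly). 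Hence $\sum_{j=1}^L 2^{-j}u_j=y\xhat$, and the second line of~\eqref{eq:NMDT} gives $\z=y\xhat+\Del z$. Subtracting $xy=(\xhat+\Del x)y=y\xhat+y\,\Del x$ yields $\z-xy=\Del z-y\,\Del x$, and since $U=\gra_{[0,1]^2}(xy)$ is a graph, the pointwise error $\mathcal{E}((x,y,\z),U)$ from \Cref{def:errors} is precisely $\abs{\Del z-y\,\Del x}$.

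Next I would observe that $(\Del x,y,\Del z)\in\mathcal{M}(\Del x,y)$ is exactly the McCormick envelope of the product $\Del x\cdot y$ over the box $[0,2^{-L}]\times[0,1]$ (apply~\eqref{eq:McCormick} with $\xmin=0$, $\xmax=2^{-L}$, $\ymin=0$, $\ymax=1$). Therefore, by the standard bound on McCormick under- and overestimation recalled above (see \cite[page~23]{Linderoth:2005}), $\abs{\Del z-y\,\Del x}\le\tfrac14(2^{-L}-0)(1-0)=2^{-L-2}$. As this holds for every choice of $\bm\beta\in\{0,1\}^L$ and these cases cover all feasible $(x,y,\z)$, we conclude $\mathcal{E}^{\max}(\text{NMDT},U)\le 2^{-L-2}$. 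For the matching lower bound I would exhibit a feasible point attaining it: take $\bm\beta=\bm 0$, $u_j=0$ for all $j$, $y=\tfrac12$, $\Del x=2^{-L-1}$; then the McCormick constraints force $\Del z\in[0,2^{-L-1}]$ while $\Del x\cdot y=2^{-L-2}$ sits at the midpoint, so choosing $\Del z\in\{0,2^{-L-1}\}$ gives error exactly $2^{-L-2}$, consistent with the midpoint-of-the-box location in the cited fact.

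I do not expect a genuine obstacle here; the only steps needing a little care are the exactness claim $u_j=y\beta_j$ for fixed binary $\beta_j$ (immediate from~\eqref{eq:McCormick-bin}) and the bookkeeping that, with $\bm\beta$ fixed, $x=\xhat+\Del x$ has $\xhat$ constant while $\Del x$ still sweeps all of $[0,2^{-L}]$, so that the reduced problem really is the full McCormick box $[0,2^{-L}]\times[0,1]$ rather than a sub-box. Everything else is a direct invocation of the known McCormick box-error estimate.
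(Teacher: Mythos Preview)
Your proposal is correct and follows essentially the same approach as the paper: fix $\bm\beta\in\{0,1\}^L$, use exactness of the McCormick envelope for binary-times-continuous products to force $u_j=y\beta_j$, and reduce the whole error to that of the single McCormick relaxation $(\Del x,y,\Del z)\in\mathcal{M}(\Del x,y)$ over $[0,2^{-L}]\times[0,1]$, invoking the standard $\tfrac14(\xmax-\xmin)(\ymax-\ymin)$ bound. Your explicit witness for the lower bound is a nice addition that the paper only gestures at via the Linderoth citation.
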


Likewise, for \DNMDT, the maximum error in $ \z = xy $ is purely in the McCormick relaxation of the term $\left(\Del x, \Del y, \Del z\right) \in \mathcal{M}\left(\Del x, \Del y\right)$ over the region $(\Del x, \Del y) \in [0, 2^{-L}]\times [0, 2^{-L}]$, yielding a maximum error of $\tfrac 14(2^{-L} \cdot 2^{-L}) = 2^{-2L-2}$.

\begin{proposition}
    The maximum error in the \DNMDT MIP relaxation for $ \z = xy $
    with $ x, y \in [0, 1] $ is $\tfrac 14(2^{-L} \cdot 2^{-L}) = 2^{-2L-2}$.
\end{proposition}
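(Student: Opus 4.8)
The plan is to mirror the argument just given for \NMDT and reduce the computation to the maximum error of a single McCormick relaxation of a continuous bilinear product --- this time over a box of side length $2^{-L}$ in \emph{both} coordinates rather than just one. The claimed value $2^{-2L-2}$ should then drop out of the elementary box estimate, with $\lambda$ playing no role.

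First I would fix the binary vectors $\bm\beta^x,\bm\beta^y \in \{0,1\}^L$ in~\eqref{eq:D-NMDT}. Writing $k_x \define \sum_{j=1}^L 2^{L-j}\beta^x_j$ and $k_y$ analogously (both integers in $\{0,\dots,2^L-1\}$), the first line of~\eqref{eq:D-NMDT} forces $x = k_x 2^{-L} + \Del x$ and $y = k_y 2^{-L} + \Del y$ with $\Del x,\Del y \in [0,2^{-L}]$, so $(x,y)$ ranges only over one grid cell of side $2^{-L}$. Since each $\beta^x_j$ and $\beta^y_j$ is now $0$ or $1$, the McCormick envelopes $\mathcal M(\lambda\Del y + (1-\lambda)y,\beta^x_j)$ and $\mathcal M((1-\lambda)\Del x + \lambda x,\beta^y_j)$ are exact (using the bounds on these two linear expressions from \Cref{app:gen-bnds}), \ie they collapse to $u_j = (\lambda\Del y + (1-\lambda)y)\beta^x_j$ and $v_j = ((1-\lambda)\Del x + \lambda x)\beta^y_j$. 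Substituting these into $\z = \sum_{j=1}^L 2^{-j}(u_j+v_j) + \Del z$ and invoking the algebraic identity underlying the derivation in \Cref{ssec:derivations} --- which says exactly that this expression equals $xy$ as soon as $\Del z = \Del x\Del y$ --- shows that on each fixed grid cell the only remaining slack is $\Del z - \Del x\Del y$, subject to $(\Del x,\Del y,\Del z) \in \mathcal M(\Del x,\Del y)$ over the box $[0,2^{-L}]^2$.

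Next I would apply the standard fact (\eg \cite[page 23]{Linderoth:2005}, already cited above) that the McCormick relaxation of $\z = xy$ over a box $[\xmin,\xmax]\times[\ymin,\ymax]$ has maximum under- and overestimation $\tfrac14(\xmax-\xmin)(\ymax-\ymin)$, attained at the centre. For the box $[0,2^{-L}]^2$ this gives pointwise error exactly $\tfrac14\cdot 2^{-L}\cdot 2^{-L} = 2^{-2L-2}$ at $(\Del x,\Del y)=(2^{-L-1},2^{-L-1})$ and strictly less elsewhere on the cell. As this value is independent of the chosen cell and of $\lambda$, maximizing over all $\bm\beta^x,\bm\beta^y$ and over $(\Del x,\Del y)$ yields the upper bound $2^{-2L-2}$ on the maximum error, while the matching lower bound follows because the witnessing point is feasible in~\eqref{eq:D-NMDT} for any fixed grid cell.

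The only step that needs genuine checking is the algebraic identity that, once the McCormick products are taken exactly and $\Del z = \Del x\Del y$, the reconstructed $\z$ equals $xy$ for \emph{every} $\lambda \in [0,1]$ --- that is, that the error is truly confined to the single continuous term $\Del z - \Del x\Del y$ and does not leak through the $\lambda$-weighted mixing of the two discretizations. This is precisely what the derivation of \DNMDT in \Cref{ssec:derivations} establishes (it is the reason \DNMDT is a valid relaxation at all), so in the final write-up I would simply appeal to it rather than re-expanding the sums; everything else is the elementary box computation above, identical in form to the \NMDT case but with the second continuous factor shrunk from $1$ to $2^{-L}$.
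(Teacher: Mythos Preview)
Your proposal is correct and mirrors the paper's own argument: fix the binary vectors so that the McCormick envelopes involving $\beta^x_j$ and $\beta^y_j$ become exact, leaving the sole error term $\Del z - \Del x\Del y$ over the box $[0,2^{-L}]^2$, and then invoke the standard $\tfrac14(\xmax-\xmin)(\ymax-\ymin)$ estimate from \cite{Linderoth:2005}. Your write-up is in fact more explicit than the paper's---you spell out the role of $\lambda$ and the matching lower bound---but the route is the same.
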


For univariate \DNMDT, the maximum error in $z=x^2$  arises from the McCormick relaxation $(\Del x, \Del \y) \in \mathcal{M}(\Del x, \Del x)$ over the interval $\Del x \in [0, 2^{-L}]$, yielding a maximum error of $ 2^{-2L - 2}$.

\begin{proposition}
    The maximum error in the univariate \DNMDT MIP relaxation for $ \z = xy $
    with $ x, y \in [0, 1] $ is $ 2^{-2L - 2}$.
\end{proposition}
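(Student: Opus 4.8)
The plan is to reuse the reduction already carried out for \NMDT: once the binary vector is fixed, the univariate \DNMDT formulation~\eqref{eq:D-NMDT-xsq} is exact except for one McCormick relaxation of a product of two quantities each lying in $[0,2^{-L}]$, whose worst-case deviation from the true product is the known $\tfrac14(2^{-L})^2=2^{-2L-2}$. (The relevant set $U$ in \Cref{def:errors} is $\gra_{[0,1]}(x^2)$, so the pointwise error at a projected point $(x,\y)$ is simply $\abs{x^2-\y}$, and $\mathcal E^{\max}$ is the maximum of this over $\proj_{\bm u}$ of the formulation.)

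First I would fix $\bm\beta\in\{0,1\}^L$ and set $\xhat\define\sum_{j=1}^L 2^{-j}\beta_j$, so that $x=\xhat+\Del x$ with $\Del x\in[0,2^{-L}]$ the only remaining continuous degree of freedom; note $x\in[0,1]$ holds automatically since $\xhat\le 1-2^{-L}$. Because each $\beta_j$ is binary, every feasible point of~\eqref{eq:D-NMDT-xsq} satisfies $u_j=(\Del x+x)\beta_j$ for the binary McCormick block $(\Del x+x,\beta_j,u_j)\in\mathcal{M}(\Del x+x,\beta_j)$ (the case $\beta_j=0$ forces $u_j=0$, the case $\beta_j=1$ forces $u_j=\Del x+x$). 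Substituting $\Del x+x=\xhat+2\Del x$ into $\y=\sum_{j=1}^L 2^{-j}u_j+\Del \y$ gives $\y=(\xhat+2\Del x)\xhat+\Del \y=\xhat^2+2\xhat\Del x+\Del \y$, while $x^2=(\xhat+\Del x)^2=\xhat^2+2\xhat\Del x+(\Del x)^2$; subtracting yields
\begin{equation*}
\y-x^2=\Del \y-(\Del x)^2 .
\end{equation*}
Hence the error at $(x,\y)$ equals exactly the McCormick under-/overestimation of $\Del \y=(\Del x)^2$ over $\Del x\in[0,2^{-L}]$, independently of the choice of $\bm\beta$.

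It remains to bound this single McCormick error. By the general estimate recalled in \Cref{ssec:max_error} — a McCormick envelope of a product over $[\xmin,\xmax]\times[\ymin,\ymax]$ deviates from the product by at most $\tfrac14(\xmax-\xmin)(\ymax-\ymin)$, attained at the midpoint — applied here with both factors equal to $\Del x$ and both ranges equal to $[0,2^{-L}]$, we get $\abs{\Del \y-(\Del x)^2}\le\tfrac14\cdot2^{-L}\cdot2^{-L}=2^{-2L-2}$, so $\mathcal E^{\max}\le 2^{-2L-2}$. For the matching lower bound I would exhibit the feasible point with $\Del x=2^{-L-1}$ and $\Del \y=2^{-L}\Del x=2^{-2L-1}$, which one checks satisfies all three inequalities of $\mathcal{M}(\Del x,\Del x)$ and for which $\y-x^2=2^{-2L-1}-2^{-2L-2}=2^{-2L-2}$.

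The only step that is more than bookkeeping is the identity $\y-x^2=\Del \y-(\Del x)^2$: the point is that the double-discretization substitution is set up precisely so that the weighted sum $\sum_{j}2^{-j}u_j$ reproduces $\xhat^2+2\xhat\Del x$ \emph{exactly} (because the first McCormick argument is $\Del x+x=\xhat+2\Del x$ rather than $x$ or $\xhat$), leaving only the genuinely quadratic remainder $(\Del x)^2$ — a product of two quantities each in $[0,2^{-L}]$ — to be relaxed. Everything else (exactness of the binary McCormick blocks, the $x\in[0,1]$ check, and the worst-case McCormick error on the $2^{-L}\times 2^{-L}$ box) is routine or already available in the excerpt.
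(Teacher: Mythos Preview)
Your proof is correct and follows the same approach as the paper: fix the binary vector, use exactness of the binary McCormick blocks to reduce the error to the single univariate McCormick relaxation $(\Del x,\Del\y)\in\mathcal{M}(\Del x,\Del x)$ on $[0,2^{-L}]$, and read off the $2^{-2L-2}$ bound. In fact you give more detail than the paper does---the paper simply asserts that the reduction works ``similarly'' to the \NMDT case, whereas you explicitly verify the key identity $\y-x^2=\Del\y-(\Del x)^2$ and exhibit a witness point for the lower bound.
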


Finally, for univariate \NMDT, the error is incurred by the McCormick relaxation $(\Del x, x, \Del \y) \in \mathcal{M}(\Del x, x)$ over the box $(\Del x, x) \in [0, 2^{-L}] \times [0,1]$ with $x = k 2^{-L} + \Del x$ for some $k \in \{0, \dots, 2^{-L}-1\}$. 
Over this box, the error-maximizing point $ (x, \Del x) = (\tfrac 12, 2^{-L-1}) $ derived in \cite{Linderoth:2005} is not feasible,
as  $ x = \tfrac 12 $ implies $ \Del x = 0 $. %
In fact, we can show that the maximum error is slightly less than the expected $ 2^{-L - 2} $.
To prove this, we focus on the maximum error of the underestimating part of the McCormick envelope with respect to $x\Del x$ and skip the overestimating part as it works analogously.
By \cref{eq:McCormick}, the McCormick relaxation underestimator over the box $(\Del x, x) \in  [0, 2^{-L}] \times [0,1]$ is given as $$\max_{\substack{\Del x \in [0, 2^{-L}],\\k\in \{0,\ldots, 2^{L}-1\}}} \{0,\Del x - 2^{-L}(1-x)| x=k2^{-L}+ \Del x\}.$$
The underestimator is zero at points in the domain where 
\begin{equation}
\label{MCUE:1}
    \Del x \leq -2^{-L}x+2^{-L}=2^{-L}(1-2^{-L}k- \Del x)
\end{equation}
holds and $\Del x - 2^{-L}(1-2^{-L}k- \Del x)$ at the rest of the domain.
The maximum error of the McCormick underestimation is
\begin{align*}
    &\max_{\substack{\Del x \in [0, 2^{-L}],\\k\in \{0,\ldots, 2^{L}-1\}}} \{ x \Del x - \max\{0, \Del x - 2^{-L}(1-x)\} | x=k2^{-L}+ \Del x \}\\
    =&\max_{\substack{\Del x \in [0, 2^{-L}],\\k\in \{0,\ldots, 2^{L}-1\}}} \{ 2^{-L}k \Del x + (\Del x)^2 - \max\{0, \Del x - 2^{-L}(1-\Del x - k2^{-L})\} \}.%
\end{align*}
First, we determine the maximum error on the piece where the McCormick underestimator is the zero function.
In the $(\Del x, k)$ space the region described by the inequality \eqref{MCUE:1} equals $\Del x \leq \frac{2^L-k}{2^L+4^L}$.
Now suppose we are at some point in this region, then we can increase the error function $2^{-L}k \Del x + (\Del x)^2 -0$ by increasing either $k$ or $\Del x$. 
Consequently, the maximum error is attained 
if $\Del x = \frac{2^L-k}{2^L+4^L}$.
The error 
at these points can be purely expressed as a quadratic function in $k$:
$$x \Del x - 0 = (2^{-L}k+ \Del x) \Del x = \left(2^{-L}k+ \frac{2^L-k}{2^L+4^L}\right) \left(\frac{2^L-k}{2^L+4^L}\right).$$
It is maximized and symmetric at $k^*=\frac{1}{2}(2^{L}-1)= 2^{L-1} - \frac{1}{2}$.
Since $k^* \not\in \N$ for any $L\geq 1$, the maximum error is attained at $k_1=2^{L-1}-1$ and $k_2=2^{L-1}$.
It has a value of
$2^{-L-2} - 2^{-3L-2} (1+2^{-L})^{-2}$.
We can use the same reasoning for the region  $\Del x \geq \frac{2^L-k}{2^L+4^L}$ and the increase in the error function by decreasing either $k$ or $\Del x$ and obtaining the same maximum error at the same points.
The values $k_1$ and $k_2$ correspond to $$(\Del x, x)= \left( \tfrac{1}{2(2^L + 1)}, \tfrac 12 \pm \tfrac{1}{2(2^L + 1)}\right).$$
The maximum overestimation error with the McCormick envelope, where the proof works very similarly, is obtained at $(\Del x, x)=(\frac{1}{4},\frac{1}{4})$ and $(\Del x, x)=(\frac{1}{4},\frac{3}{4})$ with a value of $2^{-4}$ if $L=1$.
However, for $L \ge 2$ the value is somewhat lower, namely $2^{-L-2} - 2^{-3L-2} (1-2^{-L})^{-2}$ attained at
\begin{equation*}
(\Del x, x) = \lrp{ \tfrac{1}{2(2^L - 1)},\tfrac 12  \pm \tfrac{1}{2(2^L - 1)}}\text{ if } L \ge 2.
\end{equation*}
The maximum error is therefore set by the underestimation.
We summarize these findings in the following proposition.
\begin{proposition}
    The maximum error in the univariate \NMDT relaxation for $ \z = xy $
    with $ x, y \in [0, 1] $ is  $2^{-L-2} - 2^{-3L-2} (1+2^{-L})^{-2}$. 
\end{proposition}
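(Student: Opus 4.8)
The univariate \NMDT relaxation here is formulation~\eqref{eq:NMDT-xsq}, which models $\y = x^2$ for $x \in [0,1]$ (the specialization of $\z = xy$ to $y = x$), and the plan is to reduce its maximum error to that of the single McCormick relaxation $(\Del x, x, \Del \y) \in \mathcal{M}(\Del x, x)$ and then optimize jointly over the residual $\Del x \in [0, 2^{-L}]$ and the grid index $k$. First I would note that in~\eqref{eq:NMDT-xsq} each $(x, \beta_j, u_j) \in \mathcal{M}(x, \beta_j)$ is exact because $\beta_j$ is binary, so for fixed $\bm\beta \in \{0,1\}^L$ we have $u_j = x\beta_j$ and $x = k2^{-L} + \Del x$ with $k = \sum_{j=1}^L 2^{L-j}\beta_j \in \{0, \ldots, 2^L - 1\}$. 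Hence the entire gap between $\y$ and $x^2$ stems from $\mathcal{M}(\Del x, x)$ relaxing the product $x\,\Del x$ over the coupled box $(\Del x, x) \in [0, 2^{-L}] \times [0,1]$, and I would bound the under- and overestimation errors separately and take the larger.

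For the underestimation I would take the McCormick lower face from~\eqref{eq:McCormick}, which yields the underestimator $\max\{0, \Del x - 2^{-L}(1 - x)\}$ and splits the box at the threshold~\eqref{MCUE:1}. On the piece where this vanishes, the pointwise error equals $x\,\Del x = (2^{-L}k + \Del x)\Del x$, which is strictly increasing in both $k$ and $\Del x$; the key step is therefore to drive $\Del x$ up to the boundary $\Del x = \frac{2^L - k}{2^L + 4^L}$, turning the error into a single quadratic in the integer~$k$. I would then maximize this quadratic: its continuous maximizer is $k^\ast = \tfrac12(2^L - 1)$, and the decisive observation is that $k^\ast \notin \N$ for every $L \ge 1$, so the integer optimum is attained at the symmetric pair $k = 2^{L-1} - 1$ and $k = 2^{L-1}$, giving the claimed value $2^{-L-2} - 2^{-3L-2}(1 + 2^{-L})^{-2}$. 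A symmetric argument on the complementary piece $\Del x \ge \frac{2^L - k}{2^L + 4^L}$ (now decreasing $k$ or $\Del x$ toward the same boundary) reproduces this maximizer and value.

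Finally I would repeat the computation for the McCormick overestimating face, which gives $2^{-4}$ at $L = 1$ and $2^{-L-2} - 2^{-3L-2}(1 - 2^{-L})^{-2}$ for $L \ge 2$; since $(1 - 2^{-L})^{-2} \ge (1 + 2^{-L})^{-2}$, the subtracted correction is larger, so the overestimation error never exceeds the underestimation error and the maximum is set by the underestimating face. The main obstacle I anticipate is the mixed continuous--discrete nature of the optimization: the classical McCormick maximizer $(x, \Del x) = (\tfrac12, 2^{-L-1})$ is infeasible under $x = k2^{-L} + \Del x$ (which forces $\Del x \in \{0, 2^{-L}\}$ at $x = \tfrac12$), so the argument must correctly show that the continuous optimum in $\Del x$ is pushed onto the region boundary and that the residual quadratic in $k$ peaks strictly between two integers---precisely the source of the corrective term $-2^{-3L-2}(1 + 2^{-L})^{-2}$.
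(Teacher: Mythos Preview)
Your proposal is correct and follows essentially the same approach as the paper's own argument: reduce to the single McCormick error on $(\Del x, x)$ with the coupling $x = k2^{-L} + \Del x$, push $\Del x$ to the boundary of the zero-underestimator region, optimize the resulting quadratic over integer $k$ to obtain the nearest-integer maximizers $2^{L-1}-1,\,2^{L-1}$, and then verify that the overestimating face never exceeds the underestimating one. The steps, the key observation that $k^\ast = \tfrac12(2^L-1) \notin \N$, and the final comparison of the two faces all match the paper exactly.
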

A summary of the maximum error analysis results can be found in \Cref{tab:full-char}.
It should be noted that for a fixed depth~$L$,
\HybS and \DNMDT provide the smallest maximum errors
among the considered MIP relaxations in our study.

\subsection{Average Error Width and Minimizing the Average Error Width}
\label{section:MIPerror_and_volume}
In this section, we will study the \averageErrorWidth of the considered MIP relaxation. 
In \cref{def:errors} the \averageErrorWidth is defined as the volume enclosed by the projected MIP relaxation.
We consider it to be an additional measure of the quality of a MIP relaxation besides the maximum error.

For equations of the form $ \y = x^2 $, univariate \DNMDT gives piecewise McCormick relaxations.
In \cite[Proposition 5]{Beach2020-compact},
it is shown that uniform discretization is optimal for fixed numbers of breakpoints.
However, for univariate \NMDT the calculation of the volume is much more complicated, so we omit it here.

Next, we compute the \averageErrorWidths of \NMDT and \DNMDT for the equation $\z=xy$. 
Then we prove that the uniform discretizations,
which are used in the definition of \NMDT and \DNMDT,
are indeed optimal in terms of the minimizing the volume of the projected MIP relaxation
if the number of discretization points is fixed (\ie if~$L$ and~$ L_1 $ are fixed).

\begin{proposition}
    Let $ \PIP_{\NMDT} $ and $ \PIP_{\DNMDT} $ be the MIP relaxations of \NMDT and \DNMDT
    for $ \z = xy $ for some $ L \geq 0 $ as defined in~\eqref{eq:NMDT} and~\eqref{eq:D-NMDT},
    respectively.
    Their respective \averageErrorWidths are 
    \begin{equation*}
        \mathcal E^{\text{avg}}(\PIP_{\NMDT}, \gra_{[0, 1]^2}(xy)) = \tfrac 16 2^{-L-2}
    \end{equation*}
    and
    \begin{equation*}
        \mathcal E^{\text{avg}}(\PIP_{\DNMDT}, \gra_{[0, 1]^2}(xy)) = \tfrac 16 2^{-2L - 2}.
    \end{equation*}
\end{proposition}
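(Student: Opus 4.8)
The plan is to build on the reduction already made in \Cref{ssec:max_error}: once the binary variables are fixed, each of the two formulations collapses, in $(x,y,z)$-space, to a single McCormick-envelope region over a small box, carried there by a volume-preserving shear. Since $\gra_{[0,1]^2}(xy)$ has Lebesgue measure zero, $\mathcal E^{\text{avg}}(\PIP,\gra_{[0,1]^2}(xy))=\vol(\projxyz(\PIP))$, so it suffices to sum the volumes of these regions over all choices of the binary variables and, for each, to reduce matters to the elementary computation of the volume enclosed between the McCormick over- and under-estimators of a bilinear product over a rectangle.

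First I would treat \NMDT. Fixing $\bm\beta\in\{0,1\}^L$ and writing $k=\sum_{j=1}^L 2^{L-j}\beta_j$, we get $x=k2^{-L}+\Del x$ with $\Del x\in[0,2^{-L}]$, and, as observed in \Cref{ssec:max_error}, the systems $(y,\beta_j,u_j)\in\mathcal M(y,\beta_j)$ are exact, so $u_j=y\beta_j$ and hence $z=k2^{-L}y+\Del z$, where $\Del z$ ranges over the McCormick envelope of $\Del x\cdot y$ on $[0,2^{-L}]\times[0,1]$. The map $(\Del x,y,\Del z)\mapsto(k2^{-L}+\Del x,\,y,\,k2^{-L}y+\Del z)$ has Jacobian determinant $1$, and as $\bm\beta$ ranges over $\{0,1\}^L$ its images tile $[0,1]^2$ in the $(x,y)$-coordinates up to a measure-zero set. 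Hence $\vol(\projxyz(\PIP_{\NMDT}))$ equals $2^L$ times the volume $V(p,q)$ enclosed between the McCormick estimators of $ab$ over $[0,p]\times[0,q]$ at $p=2^{-L}$, $q=1$. For generic $p,q$ the gap at $(a,b)$ is $\min\{pb,qa\}-\max\{0,pb+qa-pq\}$; the substitution $a=pt$, $b=qu$ turns this into $pq(\min\{t,u\}-\max\{0,t+u-1\})$ and scales the area element by $pq$, so $V(p,q)=p^2q^2\int_0^1\int_0^1(\min\{t,u\}-\max\{0,t+u-1\})\,\mathrm{d}t\,\mathrm{d}u$, and a short split of the unit square along $t=u$ and $t+u=1$ evaluates the integral to $\tfrac16$. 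Multiplying $2^L\cdot V(2^{-L},1)$ then gives the \averageErrorWidth of \NMDT.

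For \DNMDT one fixes both $\bm\beta^x$ and $\bm\beta^y$ (with corresponding integers $k^x,k^y$); the only extra step is to expand $\sum_{j=1}^L 2^{-j}(u_j+v_j)$ using exactness of the two binary McCormick systems. Collecting terms, every $\lambda$-dependent contribution cancels and one is left with $z=k^xk^y4^{-L}+k^x2^{-L}\Del y+k^y2^{-L}\Del x+\Del z$, with $\Del z$ ranging over the McCormick envelope of $\Del x\cdot\Del y$ over the square $[0,2^{-L}]^2$; again the induced map on $(\Del x,\Del y,\Del z)$ has unit Jacobian and its images tile $[0,1]^2$. Hence $\vol(\projxyz(\PIP_{\DNMDT}))$ equals the number $2^{2L}$ of cells of the $2^{-L}\times 2^{-L}$ grid times $V(2^{-L},2^{-L})=\tfrac16(2^{-L})^4$. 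I expect the only genuine content beyond this bookkeeping to be the cancellation of the $\lambda$-terms in the \DNMDT expansion, together with the verification that the shears are volume-preserving so that the projected volume really is a plain sum of McCormick-gap volumes; the underlying integral is routine.
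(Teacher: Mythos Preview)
Your argument is essentially the paper's: both reduce to a sum of McCormick-envelope volumes over the grid cells induced by fixing the binaries, the only difference being that you make the volume-preserving shear and the $\lambda$-cancellation explicit and derive the McCormick volume $\tfrac16\,p^2q^2$ by direct integration, whereas the paper simply quotes that formula from \cite{Linderoth:2005}. Note that both your computation and the paper's own proof yield $\tfrac16\,2^{-L}$ and $\tfrac16\,2^{-2L}$ (consistent with \Cref{tab:full-char}), not the $\tfrac16\,2^{-L-2}$ and $\tfrac16\,2^{-2L-2}$ printed in the proposition, which appears to be a typo in the statement.
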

\begin{proof}
    Note that the discretization in \NMDT and \DNMDT yields piecewise McCormick relaxations
    over a uniformly spaced grid, where each grid piece
    corresponds to some fixed integer solution $ \bm \beta^x, \bm \beta^y \in \{0, 1\}^L$,
    $ \Del x, \Del y \in [0, 2^{-L}] $. 
    The volume of of the McCormick envelope over a single grid piece is $ \tfrac 16 \lx^2 \ly^2 $,
    where~$ \lx $ is its $x$-length and $ \ly $ is its $y$-length
    (see \eg \cite[page 22]{Linderoth:2005}).
    The \averageErrorWidth is then the sum over all grid piece volumes.
    Now, for \NMDT we have $ 2^L $~grid pieces with $ \ly = 1 $ and $ \lx = 2^{-L} $,
    yielding a volume per grid piece of $ \tfrac 16 2^{-2L} $
    and thus a total volume of $ \tfrac 16 2^{-L} $. 
    Similarly, for \DNMDT we have $ 2^{2L} $~grid pieces with $ \lx = \ly=2^{-L} $,
    which yields a volume per grid piece of $ \tfrac 16 2^{-4L} $
    and thus a total volume of $ \tfrac 16 2^{-2L} $.
    \hfill \qed
\end{proof}
When applied to $ \gra_{[0, 1]^2}(xy) $, \NMDT and \DNMDT are both \emph{piecewise McCormick} relaxations,
defined as
\begin{equation*}
    \bigcup_{k \in \lrbr{n}, l \in \lrbr{m}} \mathcal M([x_{k - 1}, x_k], [y_{l - 1}, y_l]),
\end{equation*}
where we use the notation $ \mathcal M([x_{k - 1}, x_k], [y_{l - 1}, y_l]) $
to mean the McCormick envelope $ \mathcal M(x, y) $
with $ x \in [x_{k - 1}, x_k] $ and $ y \in [y_{l - 1}, y_l] $,
for $ 0 = x_0 < x_1 < \ldots < x_n = 1 $ and $ 0 = y_0 < y_1 < \ldots < y_m = 1 $.

We now prove that a uniform placement of breakpoints
minimizes the \averageErrorWidth in a piecewise McCormick relaxation.
For $ n = 2^L $ and $ m = 1 $, this yields precisely the \NMDT relaxation of depth~$L$,
and if $ n = m = 2^L $, then this yields precisely the \DNMDT relaxation of depth~$L$.
Hence, they are optimal discretizations.
The \averageErrorWidth in \NMDT is $ \frac{1}{6n} = \frac{1}{6}2^{-L} $,
and $ \frac{1}{6n^2} = \frac{1}{6}2^{-2L} $ in \DNMDT.
This follows from the proof below.

\begin{theorem}
\label{prop:opt_pw_McCormick}
    Let $ 0 = x_0 < x_1 < \ldots < x_n = 1 $ and $ 0 = y_0 < y_1 < \ldots < y_m = 1 $
    be sets of breakpoints.
    Then a uniform spacing of these breakpoints minimizes the \averageErrorWidth
    over all piecewise McCormick relaxations of~$ \gra_{[0, 1]^2}(xy) $.
\end{theorem}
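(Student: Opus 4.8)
The plan is to express the average error width as an explicit function of the breakpoint spacings, to observe that it factors into a product of two one-dimensional terms, and then to minimize each term by the Cauchy--Schwarz inequality.

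First I would set up the volume bookkeeping. Write $\ell_{x_k} = x_k - x_{k-1}$ and $\ell_{y_l} = y_l - y_{l-1}$, so that $\sum_{k=1}^n \ell_{x_k} = 1$ and $\sum_{l=1}^m \ell_{y_l} = 1$. The rectangles $[x_{k-1},x_k]\times[y_{l-1},y_l]$ tile $[0,1]^2$ and pairwise intersect only along grid lines, hence the cells $\mathcal M([x_{k-1},x_k],[y_{l-1},y_l])$ overlap on a set of three-dimensional measure zero; since $\gra_{[0,1]^2}(xy)$ is itself a null set, the average error width of the resulting piecewise McCormick relaxation equals the sum of the volumes of its cells. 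Using the single-cell McCormick volume $\tfrac16\,\ell_{x_k}^2\,\ell_{y_l}^2$ recalled in the proof of the preceding proposition (cf.\ \cite{Linderoth:2005}) and the fact that the double sum separates, we obtain
\begin{equation*}
    \mathcal E^{\text{avg}} \;=\; \sum_{k=1}^{n}\sum_{l=1}^{m} \tfrac16\,\ell_{x_k}^2\,\ell_{y_l}^2 \;=\; \tfrac16\Bigl(\sum_{k=1}^{n}\ell_{x_k}^2\Bigr)\Bigl(\sum_{l=1}^{m}\ell_{y_l}^2\Bigr).
\end{equation*}

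Second, because the two factors involve disjoint variables constrained by the separate normalizations above, it suffices to minimize $\sum_{k=1}^{n}\ell_{x_k}^2$ over $\ell_{x_k}\ge 0$ with $\sum_k \ell_{x_k} = 1$, and symmetrically in $y$. By Cauchy--Schwarz, $1 = \bigl(\sum_{k}\ell_{x_k}\bigr)^2 \le n\sum_{k}\ell_{x_k}^2$, with equality precisely when all the $\ell_{x_k}$ coincide, i.e.\ $\ell_{x_k} = 1/n$; likewise $\ell_{y_l} = 1/m$ for all $l$. Hence the uniform grid is the unique minimizer, attaining the minimal average error width $\tfrac{1}{6nm}$. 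Specializing to $n = 2^L$, $m = 1$ recovers the value $\tfrac16 2^{-L}$ for \NMDT, and $n = m = 2^L$ recovers $\tfrac16 2^{-2L}$ for \DNMDT, consistent with the preceding proposition.

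I do not anticipate a genuine obstacle here. The only two points deserving a word of care are the measure-zero overlap argument, needed so that the volume of the union equals the sum of the cell volumes, and the product factorization of the objective; once the latter is noticed, the optimization reduces to a textbook application of Cauchy--Schwarz, applied independently in each coordinate. If one prefers not to quote the single-cell volume formula, one may instead integrate the pointwise gap between the concave and convex McCormick envelopes over a generic cell, but this is exactly the routine computation attributed to \cite{Linderoth:2005}, which I would not reproduce.
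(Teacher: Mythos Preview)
Your proof is correct and follows essentially the same route as the paper: cite the single-cell McCormick volume $\tfrac16\ell_{x_k}^2\ell_{y_l}^2$, factor the total volume as $\tfrac16\bigl(\sum_k\ell_{x_k}^2\bigr)\bigl(\sum_l\ell_{y_l}^2\bigr)$, and minimize the two factors independently under the simplex constraints. The only difference is the last step: the paper decomposes into two convex subproblems and invokes the KKT conditions, whereas you use Cauchy--Schwarz, which is slightly more elementary and yields uniqueness of the minimizer for free.
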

\begin{proof}
    Let $ \lxk \define [x_{k - 1}, x_k] $ and $ \lyl \define [y_{l - 1}, y_l] $
    with $ k \in \lrbr{n} $ and $ \lxk \in \lrbr{m} $
    be the lengths of the grid pieces $ [x_{k - 1}, x_k] \times [y_{l - 1}, y_l] $.
    The volume of the McCormick envelope $ \mathcal M([x_{k - 1}, x_k], [y_{l - 1}, y_l]) $
    over a single grid piece is $ \tfrac16 \lxk^2 \lyl^2 $, see \cite[page 22]{Linderoth:2005}.
    Therefore, the problem of minimizing the \averageErrorWidth of a piecewise McCormick relaxation
    can be formulated as
    \begin{equation} 
    \label{eqn:pw_mccormick_opt}
        \begin{array}{rll}
            \ds \tfrac 16 \min & \sum_{i = 1}^n \sum_{j = 1}^m \lxk^2 \lyl^2\\
            \text{s.t.} & \sum_{k = 1}^n \lxk =1\\
            & \sum_{l = 1}^m \lyl =1\\
            & \lxk \geq 0 & k = 1, \ldots, n\\
            & \lyl \geq 0 & l = 1, \ldots, m.
        \end{array}
    \end{equation}
    The objective function in~\eqref{eqn:pw_mccormick_opt}
    sums the \averageErrorWidths over the single grid pieces
    while the constraints ensure that all single grid lengths sum up to~$1$
    and are greater than or equal to~$0$.
    Rewriting it to
    \begin{equation} 
    \label{eqn:pw_mccormick_opt_dec}
        \begin{array}{rll}
            \ds \tfrac 16 \min & (\sum_{i = 1}^n \lxi^2)\cdot(\sum_{j = 1}^m \lyj^2)\\
            \text{s.t.} & \sum_{i = 1}^n \lxi =1\\
            & \sum_{j = 1}^m \lyj = 1\\
            & \lxi \geq 0 & i = 1, \ldots, n\\
            & \lyj \geq 0 & j = 1, \ldots, m.
        \end{array}
    \end{equation}
    lets~\eqref{eqn:pw_mccormick_opt_dec} decompose into the two independent convex subproblems
    \begin{equation} 
    \label{eqn:pw_mccormick_opt_x}
        \begin{array}{rll}
            \ds \tfrac 16 \min & \sum_{i = 1}^n \lxi^2\\
            \text{s.t.} & \sum_{i = 1}^n \lxi =1\\
            & \lxi \geq 0 & i = 1, \ldots, n,\\
        \end{array}
    \end{equation}
    \begin{equation} 
    \label{eqn:pw_mccormick_opt_y}
        \begin{array}{rll}
            \ds \tfrac 16 \min & \sum_{j = 1}^m \lyj^2\\
            \text{s.t.} & \sum_{j = 1}^m \lyj =1\\
            & \lyj \geq 0 & j = 1, \ldots, m.
        \end{array}
    \end{equation}
    Applying the KKT conditions to~\eqref{eqn:pw_mccormick_opt_x} and~\eqref{eqn:pw_mccormick_opt_y},
    which are sufficient for global optimality here,
    directly shows that a uniform placement of the breakpoints with $ \lxi = \tfrac{1}{n} $
    and $ \lyj = \tfrac{1}{m} $ is optimal for~\eqref{eqn:pw_mccormick_opt}.
    The total \averageErrorWidth is then~$ \tfrac{1}{6nm} $.
\end{proof}
\begin{corollary}
    \label{prop:optimal_nmdt}
    Let $ 0 = x_0 < x_1 < \ldots < x_n = 1 $ and $ 0 = y_0 < y_1 = 1 $
    be sets of breakpoints with $ n = 2^L $ and $ \PIP_L $ a depth-$L$ \NMDT MIP relaxation
    of $ \gra_{[0, 1]^2}(xy) $ from \eqref{eq:NMDT}.
    Then~$ \PIP_L $ is an optimal piecewise McCormick relaxation
    with an \averageErrorWidth of $ \mathcal E^{\text{avg}}(\PIP_L, \gra_{[0, 1]^2}(xy)) = \frac{1}{6}2^{-L} $.
\end{corollary}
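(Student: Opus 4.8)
The plan is to recognize the depth-$L$ \NMDT relaxation as a piecewise McCormick relaxation over the specific breakpoint set $x_k = k2^{-L}$, $k = 0,\ldots,n$, together with $y_0 = 0$, $y_1 = 1$, and then to invoke \Cref{prop:opt_pw_McCormick}. The bulk of the work is this identification step; once it is done, optimality and the value $\tfrac16 2^{-L}$ follow immediately from the theorem with $n = 2^L$ and $m = 1$.

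First I would establish the key claim that
\[
\proj_{x,y,\z}(\PIP_L) \;=\; \bigcup_{k = 0}^{2^L - 1} \mathcal{M}\bigl([k2^{-L},\,(k+1)2^{-L}],\,[0,1]\bigr).
\]
For the inclusion ``$\subseteq$'', fix $\bm\beta \in \{0,1\}^L$ in \eqref{eq:NMDT} and set $k \define \sum_{j=1}^L 2^{L-j}\beta_j$, so that $\sum_{j=1}^L 2^{-j}\beta_j = k2^{-L}$. As already observed in \Cref{ssec:max_error}, this forces $x = k2^{-L} + \Del x$ with $\Del x \in [0,2^{-L}]$, and each McCormick block $(y,\beta_j,u_j) \in \mathcal{M}(y,\beta_j)$ is exact, i.e.\ $u_j = y\beta_j$; hence $\z = k2^{-L}y + \Del z$ with $(\Del x, y, \Del z) \in \mathcal{M}(\Del x, y)$ over $[0,2^{-L}]\times[0,1]$. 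Because McCormick envelopes are invariant under the affine shift $x = k2^{-L} + \Del x$ — adding the exactly linear term $k2^{-L}\,y$ to the four inequalities \eqref{eq:McCormick} of $\mathcal{M}(\Del x, y)$ reproduces precisely the four inequalities of $\mathcal{M}([k2^{-L},(k+1)2^{-L}],[0,1])$ — the $\bm\beta$-slice of $\PIP_L$ projects onto $\mathcal{M}([k2^{-L},(k+1)2^{-L}],[0,1])$. Taking the union over all $\bm\beta \in \{0,1\}^L$ yields ``$\subseteq$''. For ``$\supseteq$'', given $(x,y,\z)$ in the grid piece indexed by $k$, set $\bm\beta$ to the binary expansion of $k$, $\Del x \define x - k2^{-L} \in [0,2^{-L}]$, $u_j \define y\beta_j$, and $\Del z \define \z - k2^{-L}y$; the same affine-shift identity shows $(\Del x, y, \Del z) \in \mathcal{M}(\Del x, y)$, and all remaining constraints of \eqref{eq:NMDT} hold by construction, so the point lifts to a feasible solution of the extended formulation.

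Next I would apply \Cref{prop:opt_pw_McCormick}. Its proof shows that among all piecewise McCormick relaxations of $\gra_{[0,1]^2}(xy)$ with a fixed number $n$ of $x$-breakpoints and $m$ of $y$-breakpoints, the uniform spacing $\lxi = 1/n$, $\lyj = 1/m$ minimizes the \averageErrorWidth, with optimal value $\tfrac{1}{6nm}$. The breakpoint set in the statement is exactly the uniform one with $n = 2^L$ and $m = 1$, and by the identification above $\PIP_L$ is the corresponding piecewise McCormick relaxation. Hence $\PIP_L$ is an optimal piecewise McCormick relaxation, and since $\gra_{[0,1]^2}(xy)$ has zero volume we get $\mathcal{E}^{\text{avg}}(\PIP_L, \gra_{[0,1]^2}(xy)) = \vol\bigl(\proj_{x,y,\z}(\PIP_L)\bigr) = \tfrac{1}{6\cdot 2^L} = \tfrac16 2^{-L}$, consistent with the value computed in the preceding proposition.

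The main obstacle I anticipate is the careful bookkeeping in the identification step: verifying the affine-shift invariance of the McCormick inequalities and checking both inclusions — in particular that the reverse inclusion really produces a feasible point of the full extended formulation \eqref{eq:NMDT} and not merely of its projection, which requires choosing the auxiliary variables $\bm u$ and $\Del z$ consistently with the fixed $\bm\beta$. Everything after that is a direct appeal to \Cref{prop:opt_pw_McCormick}.
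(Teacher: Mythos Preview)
Your proposal is correct and follows essentially the same approach as the paper. The paper treats the corollary as immediate from \Cref{prop:opt_pw_McCormick}: it asserts (in the text preceding the theorem and in the proof of the earlier average-error-width proposition) that \NMDT is a piecewise McCormick relaxation over the uniform grid with $n=2^L$, $m=1$, and then reads off optimality and the value $\tfrac{1}{6nm}=\tfrac16 2^{-L}$; your only difference is that you spell out the identification step (both inclusions, affine-shift invariance, lifting of the reverse inclusion) more carefully than the paper does.
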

\begin{corollary}
    \label{prop:optimal_dnmdt}
    Let $ 0 = x_0 < x_1 < \ldots < x_n = 1 $ and $ 0 = y_0 < y_1 < \ldots < y_m = 1 $
    be sets of breakpoints with $ n = m = 2^L $ and $ \PIP_L $ a depth-$L$ \DNMDT MIP relaxation
    of $ \gra_{[0, 1]^2}(xy) $ from \eqref{eq:D-NMDT}.
    Then~$ \PIP_L $ is an optimal piecewise McCormick relaxation
    with an \averageErrorWidth of $ \mathcal E^{\text{avg}}(\PIP_L, \gra_{[0, 1]^2}(xy)) = \frac{1}{6}2^{-2L} $.
\end{corollary}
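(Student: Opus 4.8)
The plan is to obtain this corollary as a direct specialisation of \Cref{prop:opt_pw_McCormick}, using the fact --- already recorded in the paragraph preceding that theorem --- that the depth-$L$ \DNMDT relaxation of $\z=xy$, projected onto $(x,y,\z)$, is exactly the piecewise McCormick relaxation $\bigcup_{k,l\in\lrbr{2^L}}\mathcal M([x_{k-1},x_k],[y_{l-1},y_l])$ on the uniform grid $x_k=k2^{-L}$, $y_l=l2^{-L}$. So the proof splits into three short steps.

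First I would justify that identification. Fixing $\bm\beta^x,\bm\beta^y\in\{0,1\}^L$ in~\eqref{eq:D-NMDT} forces $x=k2^{-L}+\Del x$ and $y=l2^{-L}+\Del y$ with $\Del x,\Del y\in[0,2^{-L}]$, and each McCormick block $\mathcal M(\cdot,\beta^x_j)$, $\mathcal M(\cdot,\beta^y_j)$ is exact because its second argument is binary; hence, exactly as in the maximum-error argument of \Cref{ssec:max_error}, eliminating $\bm u,\bm v,\Del x,\Del y,\Del z$ leaves only the genuine McCormick relaxation $\mathcal M(\Del x,\Del y)$ over $[0,2^{-L}]^2$, which --- after the affine shift $(\Del x,\Del y)\mapsto(x,y)$ onto the cell $[x_{k-1},x_k]\times[y_{l-1},y_l]$ --- is precisely $\mathcal M([x_{k-1},x_k],[y_{l-1},y_l])$. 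Taking the union over all $2^{2L}$ choices of $(\bm\beta^x,\bm\beta^y)$ yields the claimed piecewise McCormick set, with breakpoint sets of size $n=m=2^L$ that are uniformly spaced.

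Second, I would invoke \Cref{prop:opt_pw_McCormick}: among all piecewise McCormick relaxations of $\gra_{[0,1]^2}(xy)$ built on breakpoint sets of these cardinalities, the uniform spacing minimises the \averageErrorWidth. Since by the first step $\PIP_L$ is exactly that uniform relaxation, it is an optimal piecewise McCormick relaxation. Third, I would read off the optimal value from the proof of \Cref{prop:opt_pw_McCormick}, where it is shown to equal $\tfrac1{6nm}$; substituting $n=m=2^L$ gives $\mathcal E^{\text{avg}}(\PIP_L,\gra_{[0,1]^2}(xy))=\tfrac16 2^{-2L}$, consistent with the value already computed for $\PIP_{\DNMDT}$.

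I do not expect any real obstacle: the corollary is essentially just \Cref{prop:opt_pw_McCormick} evaluated at $n=m=2^L$. The only part needing a little care is the first step --- making the projection/exactness bookkeeping precise so that $\PIP_L$ genuinely falls under the hypothesis of the theorem --- and that has already been carried out in \Cref{ssec:max_error}, so in the write-up it suffices to cite that discussion rather than repeat it.
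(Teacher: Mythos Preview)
Your proposal is correct and follows essentially the same approach as the paper: the corollary is obtained by noting that the depth-$L$ \DNMDT relaxation is precisely the piecewise McCormick relaxation on the uniform $2^L\times 2^L$ grid, so \Cref{prop:opt_pw_McCormick} applies directly and the optimal \averageErrorWidth $\tfrac{1}{6nm}=\tfrac{1}{6}2^{-2L}$ is read off from its proof. The paper states this even more briefly (in the paragraph preceding the theorem), while you spell out the identification step in more detail; the substance is identical.
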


We summarize the key results of \Cref{section:MIPerror_and_volume}
in the remark below and in \Cref{tab:full-char}.
\begin{remark}[Tightness of MIP Relaxations]
For an equation $ \y = x^2 $ and a fixed depth $L$, the tightened sawtooth relaxation \cite[Definition 7]{Part_I},
and the separable formulations from Part I that employ it, have the smallest volume in the projected MIP relaxation
among all studied formulations: they are equivalent in upper bound, with a tightened lower bound,
compared to univariate \NMDT and \DNMDT.
For $ \z = xy $, \DNMDT is the tightest formulation,
as it yields the convex hull of $ \gra_D(xy) $
on each grid piece $ D = [k^x 2^{-L}, (k^x + 1) 2^{-L}] \times [k^y 2^{-L}, (k^y + 1) 2^{-L}] $,
$ k^x, k^y \in \lrbr{0, 2^L - 1} $.
Combining these facts, T-\DNMDT is the tightest relaxation presented for the full MIQCQP.\hfill$\diamond$ 
\end{remark}

\subsection{Formulation Strength: LP relaxations}
\label{subsec:strength}

In the previous section, we discussed maximum error and \averageErrorWidths
incurred from using certain discretizations.
We will now consider the strength of the resulting MIP relaxations
by analyzing their LP relaxation.
First, we will check for sharpness and later compare them via the volume of the projected LP relaxation.
Sharpness means that the projected LP relaxation
equals the convex hull of the set to be formulated, here $\gra(xy)$ or $\gra(x^2)$.
If we now consider the volume of a projected LP relaxation,
it can minimally be the volume of the convex hull,
which precisely holds if the formulation is sharp.
If a formulation is not sharp,
the volume of the projected LP relaxation yields a measure
of how much the formulation is ``not sharp''.
The volume of LP relaxation as a measure of a MIP relaxation strength
was previously used in \cite{Hager-2021}.

We start with the core formulations from \Cref{sec:form-core}.
It is well known that the McCormick relaxation
yields the convex hull of the feasible set of $ \z = xy $ over box domains $ D =[\xmin, \xmax] \times[\ymin, \ymax] $. 
Therefore, it is obviously sharp. The volume is $\nicefrac{1}{6} (\xmax-\xmin)(\ymax-\ymin)$.
In \cite{Part_I} it is further shown that the sawtooth epigraph relaxation is also sharp.
Since the epigraph of $f$ is an unbounded set, we do not discuss volume here.
Next, we look at the formulations from \cref{sec:direct}.
As shown in \Cref{ssec:direct-nmdt,ssec:direct-dnmdt},
the univariate verisions of \NMDT and \DNMDT are not sharp.
As shown in \cite{Beach2020-compact}, univariate \NMDT and therefore also univariate \DNMDT have an LP relaxation volume of $ \tfrac 14 2^{-2L} $.
The LP relaxations of \NMDT and \DNMDT for $\z = xy$ yield the McCormick envelope over $D$,
and thus they are sharp.
The LP relaxation volumes of \NMDT and \DNMDT for $\z = xy$ is thus $\nicefrac{1}{6} (\xmax-\xmin)(\ymax-\ymin)$ and independent of the choice of $L$.

\section{Computational Results}
\label{sec:computations}

In order to test the MIP relaxations from \Cref{sec:direct}
with respect to their ability to determine dual bounds,
we now perform an indicative computational study.
More precisely, we will derive MIP relaxations of \non convex MIQCQP instances.
The MIP relaxations are then solved using Gurobi \cite{gurobi} as an MIP solver to determine dual bounds and a callback function that uses the \non linear programming (NLP) solver IPOPT \cite{ipopt} to find a feasible solution for the MIQCQP.
The MIP relaxation methods are tested for several discretization depths. 
To compare the considered methods to state-of-the-art spatial branching based solvers, we also run Gurobi as an MIQCQP solver.

All instances were solved in Python 3.8.3,
via Gurobi 9.5.1 and IPOPT~3.12.13 on the `Woody' cluster,
using the ``Kaby Lake” nodes with two Xeon E3-1240 v6 chips
(4~cores, HT~disabled), running at 3.7~GHZ with 32~GB of~RAM.
For more information, see the \href{https://hpc.fau.de/systems-services/systems-documentation-instructions/clusters/woody-cluster/}{Woody~Cluster~Website of Friedrich-Alexander-Universit\"at Erlangen-N\"urnberg}.
The global relative optimality tolerance in Gurobi was set to the default value of ~0.01\%, for all MIPs and MIQCQPs.

\subsection{Study Design}
\label{sec:study_design}
In the following, we explain the design of our study and go into detail regarding the instance set as well as the various parameter configurations.
\ \\

\noindent\textbf{Instances.}
We consider a three-part benchmark set of 60 instances:
20 \non convex boxQP instances from \cite{Dong-Luo-2018,Beach2020-compact,Chen2012} and earlier works,
20 AC optimal power flow (ACOPF) instances from the NESTA benchmark set (v0.7.0) (see \cite{NESTA}),
previously used in \cite{aigner2020solving}, and 20 MIQCQP instancess from the QPLIB \cite{qplib}. 
In \cref{sec:instance_set} you will find links that contain download options and detailed descriptions of the instances. For an overview of the IDs of all instances, see \cref{table_instance}.
The benchmark set is equally divided into 30 sparse and 30 dense instances.
We refer to dense instances if either the objective function and/or at least one quadratic function in the constraint set is of the form $x^\top Q x$, where $x \in \mathbb{R}^n$ are all variables of the problem and $Q \in \mathbb{R}^{n, n}$ is a matrix with at least 25\% of its entries being nonzero.
\ \\

\noindent\textbf{Parameters.}
For each instance, we solve the resulting MIP relaxation of each method from \Cref{sec:direct} using various approximation depths of $ L \in \{1, 2, 4, 6\} $ and a time limit of 8~hours. 
All MIP relaxations are solved twice. Once in the standard versions from \Cref{sec:direct} and once with a tightened underestimator version for univariate quadratic terms where $ L_1 = \max\{2, 1.5L\} $. 
Note that the tightened MIP relaxations T-\NMDT and T-\DNMDT are equivalent to the \non tightened MIP relaxations \NMDT and \DNMDT
when applied to bilinear terms of the form $z=xy$.
However, they differ from them in that all lower bounding McCormick constraints in the univariate quadratic terms of the form $z=x^2$
are replaced by a tighter sawtooth epigraph relaxation~\eqref{eq:sawtooth-epi-relax}
as described in \Cref{ssec:direct-nmdt,ssec:direct-dnmdt}.
Furthermore, we include \HybS, the most performant separable MIP relaxation from Part I, in the study.
However, we do not apply tightening to \HybS, as it was shown in Part I that this does not result in computational improvements.

In \Tabref{tab:study_structure}, one can see an overview of the different parameters in our study.
In total, we have 24~parameter configurations for 60~original problems.
However, as we do not apply tightening to \HybS we end up with 1200 MIP~instances.
For the comparison with Gurobi as a state-of-the-art MIQCQP solver, we solve an additional 480~MIP instances and 120~MIQCQP instances.
These additional MIP instances arise from disabling the cuts in Gurobi for the winner of the \NMDT-based methods and \HybS.
The 120 MIQCQP instances are built by solving all 60 benchmark problems once with cuts enabled and once with cuts disabled.
\begin{table}[h]
\caption{In the study, we consider the parameters cuts, depth, and formulation to create MIP relaxations for 60 MIQCQP instances.}

\begin{center}
    \fbox{
\begin{minipage}[t]{.35\textwidth}
\underline{\textbf{Depth}}\\
$L= 1,\, 2,\,4,\, 6$\\
$L_1 = L$\\
Tightened:\\
$L= 1,\, 2,\,4,\, 6$\\
$L_1 = \max \{2, 1.5L\}$
\end{minipage}%
\begin{minipage}[t]{.3\textwidth}
\underline{\textbf{Formulation}}\\
\HybS\\
\NMDT\\
\DNMDT
\end{minipage}%
\begin{minipage}[t]{.3\textwidth}
\underline{\textbf{Instances}}\\
boxQP (20 instances)\\
ACOPF (20 instances)\\
QPLIB (20 instances)
\end{minipage}%
}
\end{center}
\label{tab:study_structure}
\end{table}
See Subsection~\ref{subsec:comp_with_gurobi} for more details on the latter.
\ \\

\noindent\textbf{Callback function.}
Solving all MIP relaxations, we use a callback function with the local NLP solver IPOPT that works as follows: given any MIP-feasible solution,
the callback function fixes any integer variables from the original problem (before applying any of the discretization techniques from this work) according to this solution and then solves the resulting NLP locally via IPOPT in an attempt to find a feasible solution for the original MIQCQP problem.

\subsection{Results \label{sec:results}} 
In the following, we present the results of our study.
In particular, we aim to answer the following questions regarding dual bounds:
\begin{itemize}
    \item Is our enhanced method \DNMDT computationally superior to its predecessors \NMDT?
    \item Is it beneficial to use tightened versions of the \NMDT and \DNMDT, i.e., to choose $L_1>L$?
    \item How do the studied methods compare to the state-of-the-art MIQCQP solver Gurobi?
\end{itemize}

We provide performance profile plots as proposed by Dolan and More \cite{Dolan2002} to illustrate the results of the computational study regarding the dual bounds, see \cref{dnmdt_vs_nmdt_all} - \cref{gurobi_dense}.
The performance profiles work as follows:
Let $d_{p,s}$ be the best dual bound obtained by MIP relaxation or MIQCQP solver $s$ for instance $p$ after a certain time limit. 
With the performance ratio $r_{p,s} \define d_{p,s} / \min_s d_{p,s}$, the performance profile function value $P(\tau)$ is the percentage of problems solved by approach $s$ such that the ratios $r_{p,s}$ are within a factor $\tau \in \mathbb{R}$ of the best possible ratios.
All performance profiles are generated with the help of \emph{Perprof-py} by Siqueira et al. \cite{perprof}.
The plots are divided into two blocks, one for \NMDT-based methods and one for the comparison against \HybS and Gurobi as an MIQCQP solver. 
In addition to the performance profiles across all instances, we also show performance profiles for the dense and sparse subsets of the instance set.

Although the main criterion of the study is the dual bound, we also discuss run times. Here, we use the shifted geometric mean, which is a common measure for comparing two different MIP-based solution approaches. The shifted geometric mean of $n$ numbers $t_1,\ldots, t_n$ with shift $s$ is defined as $\big(\prod_{i=1}^n (t_i+s)\big)^{1/n} - s$.
It has the advantage that it is neither affected by very large outliers (in contrast to the arithmetic mean) nor by very small outliers (in contrast to the geometric mean).
We use a typical shift $s = 10$.
Moreover, we only include those instances in the computation of the shifted geometric mean, where at least one solution method delivered an optimal solution within the run time limit of $8$ hours.

Finally, we will highlight some important results regarding primal bounds in the comparison of our methods with Gurobi \cite{gurobi} as an MIQCQP solver.

\subsubsection{NMDT-based MIP relaxations}

We start our analysis of the results by looking at the \NMDT-based MIP relaxations.
In \cref{dnmdt_vs_nmdt_all} we show performance profiles for the dual bounds that are obtained by the different \NMDT-based MIP relaxations.
The plot is based on all 60 instances of the benchmark set.
Starting from $L=2$, we can see that both \DNMDT and T-\DNMDT deliver notably tighter bounds within the run time limit of 8 hours.
The largest difference is at $L=4$, where \DNMDT and T-\DNMDT are able to find dual bounds that are within a factor $1.05$ of the overall best bounds for nearly all instances.
In contrast, \NMDT and T-\NMDT require a corresponding factor of more than $1.1$.
In addition, the tightened versions perform somewhat better than the corresponding counterparts, especially for $L=4$.

\begin{figure}[h]
    \begin{center}
        \begin{minipage}{0.475\textwidth}
            \begin{center}
                 \begin{tikzpicture}
  \begin{axis}[const plot,
  cycle list={
  {blue!40!gray,solid},
  {red,dashed},
  {black,dotted},
  {brown,dashdotted}},
    xmin=1, xmax=1.25,
    ymin=-0.003, ymax=1.003,
    ymajorgrids,
    ytick={0,0.2,0.4,0.6,0.8,1.0},
    xlabel={$\tau$},
    ylabel={$P(\tau)$},
,%
    legend pos={south east},
    legend style={font=\tiny},
    width=\textwidth
    ]
\draw node[right,draw,align=left] {$L=1$\\};

  \addplot+[mark=none, thick, green!40!gray, dashed] coordinates {
    (1.0000,0.4333)
    (1.0000,0.5000)
    (1.0003,0.5000)
    (1.0005,0.5167)
    (1.0006,0.5333)
    (1.0007,0.5333)
    (1.0008,0.5500)
    (1.0014,0.5500)
    (1.0015,0.5667)
    (1.0017,0.5833)
    (1.0020,0.5833)
    (1.0022,0.6000)
    (1.0030,0.6000)
    (1.0032,0.6167)
    (1.0040,0.6167)
    (1.0046,0.6333)
    (1.0059,0.6333)
    (1.0060,0.6500)
    (1.0061,0.6667)
    (1.0083,0.6667)
    (1.0086,0.6833)
    (1.0089,0.7000)
    (1.0093,0.7167)
    (1.0096,0.7333)
    (1.0105,0.7500)
    (1.0117,0.7500)
    (1.0119,0.7667)
    (1.0178,0.7667)
    (1.0185,0.7833)
    (1.0254,0.8000)
    (1.0306,0.8000)
    (1.0335,0.8167)
    (1.0336,0.8167)
    (1.0364,0.8333)
    (1.0428,0.8333)
    (1.0430,0.8500)
    (1.0438,0.8500)
    (1.0511,0.8667)
    (1.0525,0.8833)
    (1.0627,0.8833)
    (1.0634,0.9000)
    (1.0645,0.9167)
    (1.0692,0.9167)
    (1.0708,0.9333)
    (1.0726,0.9333)
    (1.0784,0.9500)
    (1.0789,0.9500)
    (1.0795,0.9667)
    (1.0820,0.9667)
    (1.0820,0.9833)
    (1.2154,0.9833)
    (1.25,1.0000)
  };
  \addlegendentry{D-NMDT}
  \addplot+[mark=none, thick, blue!40!gray, dashed] coordinates {
    (1.0000,0.3833)
    (1.0000,0.4333)
    (1.0001,0.4333)
    (1.0003,0.4500)
    (1.0006,0.4500)
    (1.0007,0.4667)
    (1.0008,0.4667)
    (1.0010,0.4833)
    (1.0012,0.4833)
    (1.0014,0.5000)
    (1.0019,0.5000)
    (1.0020,0.5167)
    (1.0032,0.5167)
    (1.0034,0.5333)
    (1.0035,0.5500)
    (1.0040,0.5667)
    (1.0062,0.5667)
    (1.0067,0.5833)
    (1.0072,0.5833)
    (1.0078,0.6000)
    (1.0083,0.6167)
    (1.0105,0.6167)
    (1.0108,0.6333)
    (1.0116,0.6500)
    (1.0117,0.6667)
    (1.0119,0.6667)
    (1.0122,0.6833)
    (1.0125,0.7000)
    (1.0126,0.7000)
    (1.0152,0.7167)
    (1.0178,0.7333)
    (1.0335,0.7333)
    (1.0336,0.7500)
    (1.0364,0.7500)
    (1.0364,0.7667)
    (1.0373,0.7833)
    (1.0377,0.8000)
    (1.0553,0.8000)
    (1.0592,0.8167)
    (1.0610,0.8167)
    (1.0627,0.8333)
    (1.0676,0.8333)
    (1.0692,0.8500)
    (1.0784,0.8500)
    (1.0789,0.8667)
    (1.0805,0.8667)
    (1.0807,0.8833)
    (1.0820,0.8833)
    (1.0968,0.9000)
    (1.1000,0.9167)
    (1.1012,0.9167)
    (1.1065,0.9333)
    (1.1166,0.9500)
    (1.1693,0.9667)
    (1.1992,0.9833)
    (1.2065,0.9833)
    (1.2110,1.0000)
    (1.25,1.0000)
  };
  \addlegendentry{NMDT}
  \addplot+[mark=none, thick, green!40!gray, densely dotted] coordinates {
    (1.0000,0.5500)
    (1.0000,0.5667)
    (1.0001,0.5833)
    (1.0006,0.5833)
    (1.0006,0.6000)
    (1.0010,0.6000)
    (1.0012,0.6167)
    (1.0020,0.6167)
    (1.0022,0.6333)
    (1.0022,0.6500)
    (1.0030,0.6667)
    (1.0051,0.6667)
    (1.0059,0.6833)
    (1.0061,0.6833)
    (1.0061,0.7000)
    (1.0062,0.7167)
    (1.0067,0.7167)
    (1.0072,0.7333)
    (1.0078,0.7333)
    (1.0078,0.7500)
    (1.0117,0.7500)
    (1.0119,0.7667)
    (1.0254,0.7667)
    (1.0254,0.7833)
    (1.0303,0.8000)
    (1.0336,0.8000)
    (1.0364,0.8167)
    (1.0380,0.8167)
    (1.0387,0.8333)
    (1.0430,0.8333)
    (1.0438,0.8500)
    (1.0525,0.8500)
    (1.0536,0.8667)
    (1.0550,0.8833)
    (1.0592,0.8833)
    (1.0610,0.9000)
    (1.0645,0.9000)
    (1.0645,0.9167)
    (1.0726,0.9167)
    (1.0784,0.9333)
    (1.0807,0.9333)
    (1.0820,0.9500)
    (1.2154,0.9500)
    (1.25,1.0000)
  };
  \addlegendentry{T-D-NMDT}
  \addplot+[mark=none, thick, blue!40!gray, densely dotted] coordinates {
    (1.0000,0.6333)
    (1.0000,0.6500)
    (1.0007,0.6500)
    (1.0007,0.6667)
    (1.0012,0.6667)
    (1.0014,0.6833)
    (1.0017,0.6833)
    (1.0019,0.7000)
    (1.0034,0.7000)
    (1.0035,0.7167)
    (1.0046,0.7167)
    (1.0051,0.7333)
    (1.0078,0.7333)
    (1.0083,0.7500)
    (1.0125,0.7500)
    (1.0126,0.7667)
    (1.0152,0.7833)
    (1.0303,0.7833)
    (1.0306,0.8000)
    (1.0377,0.8000)
    (1.0380,0.8167)
    (1.0387,0.8167)
    (1.0428,0.8333)
    (1.0550,0.8333)
    (1.0553,0.8500)
    (1.0645,0.8500)
    (1.0676,0.8667)
    (1.0692,0.8833)
    (1.0708,0.8833)
    (1.0726,0.9000)
    (1.0795,0.9000)
    (1.0805,0.9167)
    (1.1000,0.9167)
    (1.1012,0.9333)
    (1.1992,0.9333)
    (1.2065,0.9500)
    (1.2110,0.9500)
    (1.2154,0.9667)
    (1.25,1.0000)
  };
  \addlegendentry{T-NMDT}
  \end{axis}
\end{tikzpicture}
            \end{center}
        \end{minipage}
        \hfill
        \begin{minipage}{0.475\textwidth}
            \begin{center}
                 \begin{tikzpicture}
  \begin{axis}[const plot,
  cycle list={
  {blue!40!gray,solid},
  {red,dashed},
  {black,dotted},
  {brown,dashdotted}},
    xmin=1, xmax=1.25,
    ymin=-0.003, ymax=1.003,
    ymajorgrids,
    ytick={0,0.2,0.4,0.6,0.8,1.0},
    xlabel={$\tau$},
    ylabel={$P(\tau)$},
,
    legend pos={south east},
    legend style={font=\tiny},
    width=\textwidth
    ]
  \addplot+[mark=none, thick, green!40!gray, dashed] coordinates {
    (1.0000,0.3833)
    (1.0000,0.4333)
    (1.0001,0.4667)
    (1.0003,0.4667)
    (1.0005,0.4833)
    (1.0010,0.5000)
    (1.0016,0.5000)
    (1.0016,0.5167)
    (1.0023,0.5167)
    (1.0028,0.5333)
    (1.0029,0.5500)
    (1.0032,0.5667)
    (1.0032,0.5833)
    (1.0035,0.5833)
    (1.0036,0.6000)
    (1.0047,0.6000)
    (1.0048,0.6167)
    (1.0051,0.6167)
    (1.0051,0.6333)
    (1.0054,0.6333)
    (1.0056,0.6500)
    (1.0060,0.6500)
    (1.0063,0.6667)
    (1.0107,0.6667)
    (1.0107,0.6833)
    (1.0110,0.7000)
    (1.0114,0.7000)
    (1.0130,0.7167)
    (1.0172,0.7167)
    (1.0175,0.7333)
    (1.0209,0.7333)
    (1.0223,0.7500)
    (1.0254,0.7500)
    (1.0255,0.7667)
    (1.0263,0.7833)
    (1.0295,0.7833)
    (1.0300,0.8000)
    (1.0364,0.8167)
    (1.0469,0.8167)
    (1.0470,0.8333)
    (1.0676,0.8333)
    (1.0676,0.8500)
    (1.0686,0.8500)
    (1.0706,0.8667)
    (1.0971,0.8667)
    (1.1072,0.8833)
    (1.1257,0.8833)
    (1.1271,0.9000)
    (1.1548,0.9000)
    (1.1620,0.9167)
    (1.1633,0.9167)
    (1.1682,0.9333)
    (1.1766,0.9333)
    (1.1766,0.9500)
    (1.1858,0.9500)
    (1.1918,0.9667)
    (1.2351,0.9667)
    (1.2448,0.9833)
    (1.25,1.0000)
  };
  \addlegendentry{D-NMDT}
  \addplot+[mark=none, thick, blue!40!gray, dashed] coordinates {
    (1.0000,0.2500)
    (1.0000,0.3000)
    (1.0010,0.3000)
    (1.0012,0.3167)
    (1.0016,0.3333)
    (1.0019,0.3333)
    (1.0023,0.3500)
    (1.0032,0.3500)
    (1.0032,0.3667)
    (1.0035,0.3833)
    (1.0036,0.3833)
    (1.0041,0.4000)
    (1.0044,0.4167)
    (1.0046,0.4333)
    (1.0051,0.4333)
    (1.0054,0.4500)
    (1.0058,0.4500)
    (1.0060,0.4667)
    (1.0066,0.4667)
    (1.0066,0.4833)
    (1.0088,0.4833)
    (1.0090,0.5000)
    (1.0097,0.5167)
    (1.0110,0.5167)
    (1.0114,0.5333)
    (1.0130,0.5333)
    (1.0138,0.5500)
    (1.0159,0.5667)
    (1.0172,0.5833)
    (1.0179,0.5833)
    (1.0209,0.6000)
    (1.0266,0.6000)
    (1.0266,0.6167)
    (1.0295,0.6167)
    (1.0300,0.6333)
    (1.0364,0.6333)
    (1.0364,0.6500)
    (1.0461,0.6667)
    (1.0470,0.6667)
    (1.0471,0.6833)
    (1.0606,0.6833)
    (1.0652,0.7000)
    (1.0664,0.7167)
    (1.0676,0.7167)
    (1.0686,0.7333)
    (1.0706,0.7333)
    (1.0811,0.7500)
    (1.0815,0.7500)
    (1.0854,0.7667)
    (1.0861,0.7667)
    (1.0870,0.7833)
    (1.0934,0.7833)
    (1.0959,0.8000)
    (1.0971,0.8167)
    (1.1072,0.8333)
    (1.1096,0.8333)
    (1.1123,0.8500)
    (1.1125,0.8667)
    (1.1248,0.8667)
    (1.1257,0.8833)
    (1.1271,0.8833)
    (1.1285,0.9000)
    (1.1326,0.9167)
    (1.1548,0.9333)
    (1.1813,0.9333)
    (1.1853,0.9500)
    (1.1918,0.9500)
    (1.1949,0.9667)
    (1.2014,0.9833)
    (1.2079,0.9833)
    (1.2229,1.0000)
    (1.25,1.0000)
  };
  \addlegendentry{NMDT}
  \addplot+[mark=none, thick, green!40!gray, densely dotted] coordinates {
    (1.0000,0.5167)
    (1.0000,0.5667)
    (1.0001,0.6000)
    (1.0003,0.6000)
    (1.0003,0.6167)
    (1.0016,0.6167)
    (1.0019,0.6333)
    (1.0032,0.6333)
    (1.0032,0.6667)
    (1.0046,0.6667)
    (1.0047,0.6833)
    (1.0048,0.6833)
    (1.0051,0.7000)
    (1.0056,0.7000)
    (1.0058,0.7167)
    (1.0063,0.7167)
    (1.0066,0.7333)
    (1.0088,0.7333)
    (1.0088,0.7500)
    (1.0097,0.7500)
    (1.0107,0.7667)
    (1.0175,0.7667)
    (1.0179,0.7833)
    (1.0209,0.7833)
    (1.0223,0.8000)
    (1.0226,0.8167)
    (1.0254,0.8333)
    (1.0266,0.8333)
    (1.0295,0.8500)
    (1.0300,0.8667)
    (1.0461,0.8667)
    (1.0469,0.8833)
    (1.0508,0.8833)
    (1.0550,0.9000)
    (1.0664,0.9000)
    (1.0676,0.9167)
    (1.0971,0.9167)
    (1.1072,0.9333)
    (1.1096,0.9500)
    (1.1682,0.9500)
    (1.1766,0.9667)
    (1.2229,0.9667)
    (1.2351,0.9833)
    (1.2448,0.9833)
    (1.25,1.0000)
  };
  \addlegendentry{T-D-NMDT}
  \addplot+[mark=none, thick, blue!40!gray, densely dotted] coordinates {
    (1.0000,0.4833)
    (1.0000,0.5833)
    (1.0001,0.5833)
    (1.0003,0.6000)
    (1.0019,0.6000)
    (1.0019,0.6167)
    (1.0023,0.6333)
    (1.0051,0.6333)
    (1.0054,0.6500)
    (1.0066,0.6500)
    (1.0088,0.6667)
    (1.0090,0.6833)
    (1.0130,0.6833)
    (1.0138,0.7000)
    (1.0159,0.7167)
    (1.0179,0.7167)
    (1.0209,0.7333)
    (1.0263,0.7333)
    (1.0266,0.7500)
    (1.0471,0.7500)
    (1.0508,0.7667)
    (1.0508,0.7833)
    (1.0550,0.7833)
    (1.0606,0.8000)
    (1.0811,0.8000)
    (1.0815,0.8167)
    (1.0854,0.8167)
    (1.0861,0.8333)
    (1.0870,0.8333)
    (1.0915,0.8500)
    (1.0934,0.8667)
    (1.1125,0.8667)
    (1.1131,0.8833)
    (1.1187,0.9000)
    (1.1237,0.9167)
    (1.1248,0.9333)
    (1.1620,0.9333)
    (1.1633,0.9500)
    (1.1766,0.9500)
    (1.1813,0.9667)
    (1.1853,0.9667)
    (1.1858,0.9833)
    (1.2014,0.9833)
    (1.2079,1.0000)
    (1.25,1.0000)
  };
  \addlegendentry{T-NMDT}
  \draw node[right,draw,align=left] {$L=2$\\};
  \end{axis}
\end{tikzpicture}
            \end{center}
        \end{minipage}
    \end{center}
    \hfill
    \begin{center}
        \begin{minipage}{0.475\textwidth}
            \begin{center}
                 \begin{tikzpicture}
  \begin{axis}[const plot,
  cycle list={
  {blue!40!gray,solid},
  {red,dashed},
  {black,dotted},
  {brown,dashdotted}},
    xmin=1, xmax=1.25,
    ymin=-0.003, ymax=1.003,
    ymajorgrids,
    ytick={0,0.2,0.4,0.6,0.8,1.0},
    xlabel={$\tau$},
    ylabel={$P(\tau)$},
,
    legend pos={south east},
    legend style={font=\tiny},
    width=\textwidth
    ]
  \addplot+[mark=none, thick, green!40!gray, dashed] coordinates {
    (1.0000,0.4167)
    (1.0000,0.4667)
    (1.0001,0.4667)
    (1.0001,0.5000)
    (1.0002,0.5167)
    (1.0003,0.5167)
    (1.0003,0.5333)
    (1.0004,0.5333)
    (1.0004,0.5667)
    (1.0009,0.5667)
    (1.0009,0.5833)
    (1.0010,0.5833)
    (1.0011,0.6000)
    (1.0012,0.6000)
    (1.0013,0.6167)
    (1.0014,0.6167)
    (1.0014,0.6333)
    (1.0016,0.6500)
    (1.0017,0.6667)
    (1.0025,0.6667)
    (1.0025,0.6833)
    (1.0035,0.6833)
    (1.0036,0.7000)
    (1.0038,0.7167)
    (1.0040,0.7333)
    (1.0043,0.7333)
    (1.0043,0.7500)
    (1.0044,0.7667)
    (1.0055,0.7667)
    (1.0058,0.7833)
    (1.0059,0.8000)
    (1.0061,0.8000)
    (1.0064,0.8167)
    (1.0064,0.8333)
    (1.0086,0.8333)
    (1.0087,0.8500)
    (1.0119,0.8500)
    (1.0124,0.8667)
    (1.0127,0.8667)
    (1.0137,0.8833)
    (1.0155,0.9000)
    (1.0190,0.9000)
    (1.0198,0.9167)
    (1.0256,0.9167)
    (1.0292,0.9333)
    (1.0308,0.9333)
    (1.0329,0.9500)
    (1.0337,0.9500)
    (1.0362,0.9667)
    (1.0754,0.9667)
    (1.0765,0.9833)
    (1.2310,0.9833)
    (1.2358,1.0000)
    (1.3760,1.0000)
  };
  \addlegendentry{D-NMDT}
  \addplot+[mark=none, thick, blue!40!gray, dashed] coordinates {
    (1.0000,0.2667)
    (1.0000,0.3333)
    (1.0001,0.3333)
    (1.0001,0.3667)
    (1.0003,0.3667)
    (1.0004,0.3833)
    (1.0008,0.3833)
    (1.0009,0.4000)
    (1.0010,0.4167)
    (1.0013,0.4167)
    (1.0014,0.4333)
    (1.0018,0.4333)
    (1.0018,0.4500)
    (1.0021,0.4500)
    (1.0021,0.4667)
    (1.0023,0.4833)
    (1.0025,0.5000)
    (1.0040,0.5000)
    (1.0043,0.5167)
    (1.0044,0.5167)
    (1.0048,0.5333)
    (1.0049,0.5333)
    (1.0050,0.5500)
    (1.0059,0.5500)
    (1.0060,0.5667)
    (1.0061,0.5833)
    (1.0064,0.5833)
    (1.0066,0.6000)
    (1.0073,0.6167)
    (1.0077,0.6333)
    (1.0079,0.6500)
    (1.0086,0.6667)
    (1.0087,0.6667)
    (1.0097,0.6833)
    (1.0112,0.7000)
    (1.0114,0.7000)
    (1.0119,0.7167)
    (1.0222,0.7167)
    (1.0235,0.7333)
    (1.0251,0.7333)
    (1.0256,0.7500)
    (1.0292,0.7500)
    (1.0295,0.7667)
    (1.0296,0.7833)
    (1.0308,0.8000)
    (1.0329,0.8000)
    (1.0337,0.8167)
    (1.0470,0.8167)
    (1.0574,0.8333)
    (1.0605,0.8333)
    (1.0627,0.8500)
    (1.0645,0.8667)
    (1.0765,0.8667)
    (1.0819,0.8833)
    (1.0842,0.8833)
    (1.0871,0.9000)
    (1.0881,0.9167)
    (1.1020,0.9167)
    (1.1043,0.9333)
    (1.1169,0.9333)
    (1.1197,0.9500)
    (1.1216,0.9667)
    (1.3760,1.0000)
  };
  \addlegendentry{NMDT}
  \addplot+[mark=none, thick, green!40!gray, densely dotted] coordinates {
    (1.0000,0.5667)
    (1.0000,0.6333)
    (1.0001,0.6500)
    (1.0001,0.7000)
    (1.0002,0.7000)
    (1.0002,0.7167)
    (1.0003,0.7333)
    (1.0003,0.7500)
    (1.0004,0.7500)
    (1.0004,0.7833)
    (1.0009,0.7833)
    (1.0009,0.8167)
    (1.0029,0.8167)
    (1.0030,0.8333)
    (1.0034,0.8333)
    (1.0035,0.8500)
    (1.0050,0.8500)
    (1.0050,0.8667)
    (1.0126,0.8667)
    (1.0127,0.8833)
    (1.0198,0.8833)
    (1.0222,0.9000)
    (1.0235,0.9000)
    (1.0237,0.9167)
    (1.0251,0.9333)
    (1.0574,0.9333)
    (1.0605,0.9500)
    (1.1043,0.9500)
    (1.1102,0.9667)
    (1.1216,0.9667)
    (1.1638,0.9833)
    (1.2234,1.0000)
    (1.3760,1.0000)
  };
  \addlegendentry{T-D-NMDT}
  \addplot+[mark=none, thick, blue!40!gray, densely dotted] coordinates {
    (1.0000,0.3000)
    (1.0000,0.4667)
    (1.0001,0.4667)
    (1.0001,0.5000)
    (1.0004,0.5000)
    (1.0004,0.5167)
    (1.0008,0.5333)
    (1.0011,0.5333)
    (1.0012,0.5500)
    (1.0017,0.5500)
    (1.0018,0.5667)
    (1.0021,0.5833)
    (1.0025,0.5833)
    (1.0026,0.6000)
    (1.0029,0.6167)
    (1.0030,0.6167)
    (1.0031,0.6333)
    (1.0034,0.6500)
    (1.0048,0.6500)
    (1.0048,0.6667)
    (1.0049,0.6833)
    (1.0050,0.6833)
    (1.0055,0.7000)
    (1.0073,0.7000)
    (1.0077,0.7167)
    (1.0112,0.7167)
    (1.0113,0.7333)
    (1.0114,0.7500)
    (1.0124,0.7500)
    (1.0126,0.7667)
    (1.0155,0.7667)
    (1.0162,0.7833)
    (1.0190,0.8000)
    (1.0362,0.8000)
    (1.0367,0.8167)
    (1.0470,0.8333)
    (1.0645,0.8333)
    (1.0703,0.8500)
    (1.0743,0.8667)
    (1.0754,0.8833)
    (1.0819,0.8833)
    (1.0842,0.9000)
    (1.0881,0.9000)
    (1.0921,0.9167)
    (1.1020,0.9333)
    (1.1102,0.9333)
    (1.1169,0.9500)
    (1.2234,0.9500)
    (1.2310,0.9667)
    (1.2358,0.9667)
    (1.25,1.0000)
  };
  \addlegendentry{T-NMDT}
  \draw node[right,draw,align=left] {$L=4$\\};
  \end{axis}
\end{tikzpicture}
            \end{center}
        \end{minipage}
        \hfill
        \begin{minipage}{0.475\textwidth}
            \begin{center}
                 \begin{tikzpicture}
  \begin{axis}[const plot,
  cycle list={
  {blue!40!gray,solid},
  {red,dashed},
  {black,dotted},
  {brown,dashdotted}},
    xmin=1, xmax=1.25,
    ymin=-0.003, ymax=1.003,
    ymajorgrids,
    ytick={0,0.2,0.4,0.6,0.8,1.0},
     xlabel={$\tau$},
    ylabel={$P(\tau)$},
,
    legend pos={south east},
    legend style={font=\tiny},
    width=\textwidth
    ]
  \addplot+[mark=none, thick, green!40!gray, dashed] coordinates {
    (1.0000,0.4333)
    (1.0000,0.5167)
    (1.0001,0.5167)
    (1.0001,0.6000)
    (1.0003,0.6000)
    (1.0003,0.6667)
    (1.0004,0.6667)
    (1.0004,0.6833)
    (1.0006,0.6833)
    (1.0008,0.7000)
    (1.0009,0.7000)
    (1.0014,0.7167)
    (1.0019,0.7167)
    (1.0019,0.7333)
    (1.0026,0.7333)
    (1.0027,0.7500)
    (1.0030,0.7500)
    (1.0036,0.7667)
    (1.0038,0.7667)
    (1.0039,0.7833)
    (1.0043,0.7833)
    (1.0044,0.8000)
    (1.0111,0.8000)
    (1.0120,0.8167)
    (1.0136,0.8167)
    (1.0136,0.8333)
    (1.0145,0.8500)
    (1.0181,0.8500)
    (1.0186,0.8667)
    (1.0222,0.8667)
    (1.0229,0.8833)
    (1.0250,0.9000)
    (1.0259,0.9167)
    (1.0326,0.9167)
    (1.0354,0.9333)
    (1.0685,0.9333)
    (1.0696,0.9500)
    (1.0983,0.9500)
    (1.1094,0.9667)
    (1.1595,0.9667)
    (1.1867,0.9833)
    (1.25,1.0000)
  };
  \addlegendentry{D-NMDT}
  \addplot+[mark=none, thick, blue!40!gray, dashed] coordinates {
    (1.0000,0.2833)
    (1.0000,0.3333)
    (1.0001,0.3333)
    (1.0001,0.3833)
    (1.0003,0.3833)
    (1.0003,0.4000)
    (1.0004,0.4167)
    (1.0004,0.4500)
    (1.0005,0.4667)
    (1.0006,0.4833)
    (1.0008,0.4833)
    (1.0008,0.5000)
    (1.0009,0.5167)
    (1.0019,0.5167)
    (1.0020,0.5333)
    (1.0027,0.5333)
    (1.0028,0.5500)
    (1.0039,0.5500)
    (1.0043,0.5667)
    (1.0046,0.5667)
    (1.0052,0.5833)
    (1.0053,0.5833)
    (1.0055,0.6000)
    (1.0060,0.6167)
    (1.0088,0.6167)
    (1.0096,0.6333)
    (1.0101,0.6500)
    (1.0102,0.6667)
    (1.0109,0.6667)
    (1.0110,0.6833)
    (1.0110,0.7000)
    (1.0120,0.7000)
    (1.0129,0.7167)
    (1.0129,0.7333)
    (1.0145,0.7333)
    (1.0152,0.7500)
    (1.0181,0.7667)
    (1.0186,0.7667)
    (1.0198,0.7833)
    (1.0199,0.8000)
    (1.0203,0.8000)
    (1.0215,0.8167)
    (1.0222,0.8333)
    (1.0269,0.8333)
    (1.0283,0.8500)
    (1.0360,0.8500)
    (1.0433,0.8667)
    (1.0540,0.8667)
    (1.0544,0.8833)
    (1.0616,0.9000)
    (1.0628,0.9167)
    (1.0685,0.9333)
    (1.0707,0.9333)
    (1.0719,0.9500)
    (1.0804,0.9667)
    (1.1094,0.9667)
    (1.1595,0.9833)
    (1.1903,0.9833)
    (1.1943,1.0000)
    (1.25,1.0000)
  };
  \addlegendentry{NMDT}
  \addplot+[mark=none, thick, green!40!gray, densely dotted] coordinates {
    (1.0000,0.4333)
    (1.0000,0.6833)
    (1.0001,0.6833)
    (1.0001,0.7500)
    (1.0002,0.7667)
    (1.0003,0.7833)
    (1.0004,0.7833)
    (1.0004,0.8000)
    (1.0020,0.8000)
    (1.0026,0.8167)
    (1.0028,0.8167)
    (1.0030,0.8333)
    (1.0052,0.8333)
    (1.0053,0.8500)
    (1.0060,0.8500)
    (1.0062,0.8667)
    (1.0082,0.8667)
    (1.0084,0.8833)
    (1.0129,0.8833)
    (1.0136,0.9000)
    (1.0286,0.9000)
    (1.0326,0.9167)
    (1.0355,0.9167)
    (1.0360,0.9333)
    (1.0866,0.9333)
    (1.0899,0.9500)
    (1.0983,0.9667)
    (1.1867,0.9667)
    (1.1903,0.9833)
    (1.25,1.0000)
  };
  \addlegendentry{T-D-NMDT}
  \addplot+[mark=none, thick, blue!40!gray, densely dotted] coordinates {
    (1.0000,0.2333)
    (1.0000,0.5000)
    (1.0001,0.5167)
    (1.0001,0.5333)
    (1.0002,0.5333)
    (1.0002,0.5500)
    (1.0003,0.5500)
    (1.0003,0.5667)
    (1.0004,0.5667)
    (1.0004,0.5833)
    (1.0008,0.5833)
    (1.0008,0.6000)
    (1.0014,0.6000)
    (1.0018,0.6167)
    (1.0019,0.6333)
    (1.0020,0.6500)
    (1.0036,0.6500)
    (1.0037,0.6667)
    (1.0038,0.6833)
    (1.0044,0.6833)
    (1.0046,0.7000)
    (1.0062,0.7000)
    (1.0064,0.7167)
    (1.0079,0.7333)
    (1.0082,0.7500)
    (1.0084,0.7500)
    (1.0088,0.7667)
    (1.0102,0.7667)
    (1.0109,0.7833)
    (1.0110,0.7833)
    (1.0111,0.8000)
    (1.0199,0.8000)
    (1.0203,0.8167)
    (1.0259,0.8167)
    (1.0269,0.8333)
    (1.0283,0.8333)
    (1.0286,0.8500)
    (1.0354,0.8500)
    (1.0355,0.8667)
    (1.0433,0.8667)
    (1.0437,0.8833)
    (1.0540,0.9000)
    (1.0540,0.9167)
    (1.0696,0.9167)
    (1.0707,0.9333)
    (1.0804,0.9333)
    (1.0830,0.9500)
    (1.0866,0.9667)
    (1.1943,0.9667)
    (1.2384,0.9833)
    (1.25,1.0000)
  };
  \addlegendentry{T-NMDT}
  \draw node[right,draw,align=left] {$L=6$\\};
  \end{axis}
\end{tikzpicture}
            \end{center}
        \end{minipage}
    \end{center}
    \hfill
    \caption{Performance profiles to dual bounds of NMDT-based methods on all instances.}\label{dnmdt_vs_nmdt_all}
\end{figure}
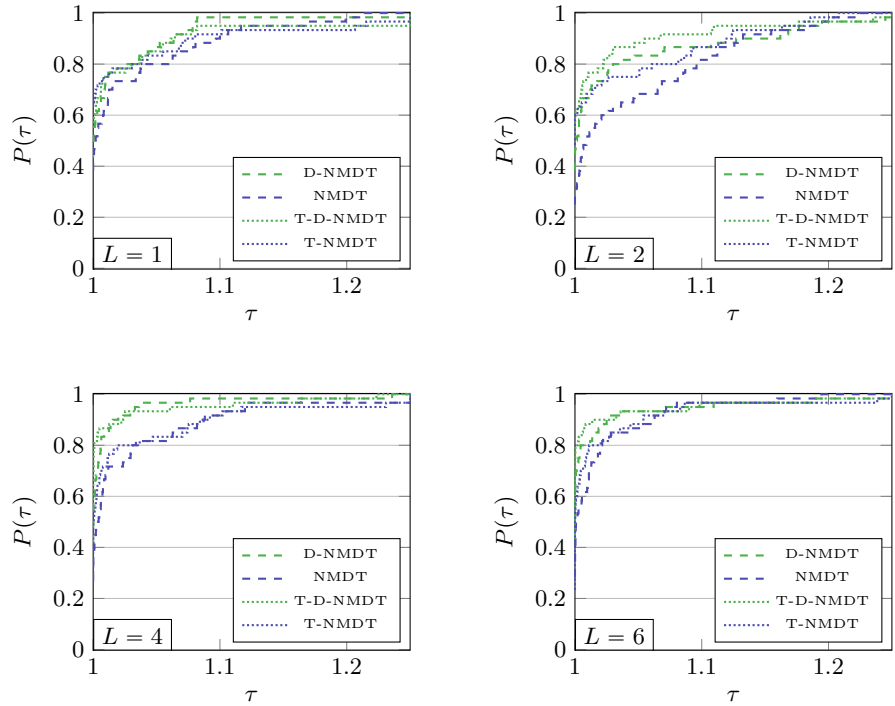

To gain a deeper insight into the benefits of \DNMDT and the tightening of \NMDT-based relaxations, we divide the benchmark set into sparse and dense instances.
For sparse instances, the advantage of the new methods is rather small; see \cref{dnmdt_vs_nmdt_sparse}.
Here, T-\DNMDT provides marginally better bounds than the other methods in case of $L=4$ and $L=6$.
For $L=1$ and $L=2$, however, T-\NMDT dominates all other approaches.
Moreover, the tightened versions outperform their counterparts for all depths $L$.

\begin{figure}
    \begin{center}
        \begin{minipage}{0.475\textwidth}
            \begin{center}
                 \begin{tikzpicture}
  \begin{axis}[const plot,
  cycle list={
  {blue!40!gray,solid},
  {red,dashed},
  {black,dotted},
  {brown,dashdotted}},
    xmin=1, xmax=1.17,
    ymin=-0.003, ymax=1.003,
    ymajorgrids,
    ytick={0,0.2,0.4,0.6,0.8,1.0},
    xlabel={$\tau$},
    ylabel={$P(\tau)$},
,
    legend pos={south east},
    legend style={font=\tiny},
    width=\textwidth
    ]
  \addplot+[mark=none, thick, green!40!gray, dashed] coordinates {
    (1.0000,0.3667)
    (1.0000,0.4333)
    (1.0006,0.4667)
    (1.0007,0.4667)
    (1.0008,0.5000)
    (1.0014,0.5000)
    (1.0017,0.5333)
    (1.0022,0.5333)
    (1.0032,0.5667)
    (1.0051,0.5667)
    (1.0061,0.6000)
    (1.0083,0.6000)
    (1.0089,0.6333)
    (1.0093,0.6667)
    (1.0096,0.7000)
    (1.0105,0.7333)
    (1.0116,0.7333)
    (1.0119,0.7667)
    (1.0178,0.7667)
    (1.0254,0.8000)
    (1.0303,0.8000)
    (1.0335,0.8333)
    (1.0336,0.8333)
    (1.0364,0.8667)
    (1.0592,0.8667)
    (1.0634,0.9000)
    (1.0645,0.9333)
    (1.0692,0.9333)
    (1.0784,0.9667)
    (1.0820,0.9667)
    (1.0820,1.0000)
    (1.1724,1.0000)
  };
  \addlegendentry{D-NMDT}
  \addplot+[mark=none, thick, blue!40!gray, dashed] coordinates {
    (1.0000,0.2000)
    (1.0000,0.3000)
    (1.0006,0.3000)
    (1.0007,0.3333)
    (1.0008,0.3333)
    (1.0010,0.3667)
    (1.0014,0.4000)
    (1.0032,0.4000)
    (1.0034,0.4333)
    (1.0035,0.4667)
    (1.0040,0.5000)
    (1.0072,0.5000)
    (1.0078,0.5333)
    (1.0083,0.5667)
    (1.0105,0.5667)
    (1.0108,0.6000)
    (1.0116,0.6333)
    (1.0119,0.6333)
    (1.0122,0.6667)
    (1.0152,0.7000)
    (1.0178,0.7333)
    (1.0335,0.7333)
    (1.0336,0.7667)
    (1.0364,0.7667)
    (1.0364,0.8000)
    (1.0377,0.8333)
    (1.0550,0.8333)
    (1.0592,0.8667)
    (1.0645,0.8667)
    (1.0692,0.9000)
    (1.0820,0.9000)
    (1.0968,0.9333)
    (1.1000,0.9667)
    (1.1166,1.0000)
    (1.1724,1.0000)
  };
  \addlegendentry{NMDT}
  \addplot+[mark=none, thick, green!40!gray, densely dotted] coordinates {
    (1.0000,0.5333)
    (1.0000,0.5667)
    (1.0006,0.5667)
    (1.0006,0.6000)
    (1.0019,0.6000)
    (1.0022,0.6333)
    (1.0061,0.6333)
    (1.0061,0.6667)
    (1.0072,0.7000)
    (1.0078,0.7000)
    (1.0078,0.7333)
    (1.0116,0.7333)
    (1.0119,0.7667)
    (1.0254,0.7667)
    (1.0254,0.8000)
    (1.0303,0.8333)
    (1.0336,0.8333)
    (1.0364,0.8667)
    (1.0377,0.8667)
    (1.0550,0.9000)
    (1.0645,0.9000)
    (1.0645,0.9333)
    (1.0692,0.9333)
    (1.0784,0.9667)
    (1.0820,1.0000)
    (1.1724,1.0000)
  };
  \addlegendentry{T-D-NMDT}
  \addplot+[mark=none, thick, blue!40!gray, densely dotted] coordinates {
    (1.0000,0.7000)
    (1.0000,0.7333)
    (1.0007,0.7333)
    (1.0007,0.7667)
    (1.0010,0.7667)
    (1.0014,0.8000)
    (1.0017,0.8000)
    (1.0019,0.8333)
    (1.0034,0.8333)
    (1.0035,0.8667)
    (1.0040,0.8667)
    (1.0051,0.9000)
    (1.0078,0.9000)
    (1.0083,0.9333)
    (1.0122,0.9333)
    (1.0152,0.9667)
    (1.0645,0.9667)
    (1.0692,1.0000)
    (1.1724,1.0000)
  };
  \addlegendentry{T-NMDT}
  \draw node[right,draw,align=left] {$L=1$\\};
  \end{axis}
\end{tikzpicture}
            \end{center}
        \end{minipage}
        \hfill
        \begin{minipage}{0.475\textwidth}
            \begin{center}
                 \begin{tikzpicture}
  \begin{axis}[const plot,
  cycle list={
  {blue!40!gray,solid},
  {red,dashed},
  {black,dotted},
  {brown,dashdotted}},
    xmin=1, xmax=1.24,
    ymin=-0.003, ymax=1.003,
    ymajorgrids,
    ytick={0,0.2,0.4,0.6,0.8,1.0},
    xlabel={$\tau$},
    ylabel={$P(\tau)$},
,
    legend pos={south east},
    legend style={font=\tiny},
    width=\textwidth
    ]
  \addplot+[mark=none, thick, green!40!gray, dashed] coordinates {
    (1.0000,0.2667)
    (1.0000,0.3000)
    (1.0001,0.3667)
    (1.0003,0.3667)
    (1.0010,0.4000)
    (1.0016,0.4000)
    (1.0016,0.4333)
    (1.0023,0.4333)
    (1.0028,0.4667)
    (1.0032,0.4667)
    (1.0032,0.5000)
    (1.0047,0.5000)
    (1.0048,0.5333)
    (1.0051,0.5333)
    (1.0051,0.5667)
    (1.0054,0.5667)
    (1.0056,0.6000)
    (1.0097,0.6000)
    (1.0110,0.6333)
    (1.0172,0.6333)
    (1.0175,0.6667)
    (1.0209,0.6667)
    (1.0223,0.7000)
    (1.0254,0.7000)
    (1.0255,0.7333)
    (1.0266,0.7333)
    (1.0300,0.7667)
    (1.0364,0.8000)
    (1.0469,0.8000)
    (1.0470,0.8333)
    (1.0676,0.8333)
    (1.0676,0.8667)
    (1.0706,0.9000)
    (1.0971,0.9000)
    (1.1072,0.9333)
    (1.1096,0.9333)
    (1.1271,0.9667)
    (1.1766,0.9667)
    (1.1766,1.0000)
    (1.2355,1.0000)
  };
  \addlegendentry{D-NMDT}
  \addplot+[mark=none, thick, blue!40!gray, dashed] coordinates {
    (1.0000,0.1333)
    (1.0000,0.1667)
    (1.0010,0.1667)
    (1.0012,0.2000)
    (1.0016,0.2333)
    (1.0019,0.2333)
    (1.0023,0.2667)
    (1.0032,0.2667)
    (1.0032,0.3000)
    (1.0035,0.3333)
    (1.0041,0.3667)
    (1.0046,0.4000)
    (1.0051,0.4000)
    (1.0054,0.4333)
    (1.0056,0.4333)
    (1.0060,0.4667)
    (1.0066,0.5000)
    (1.0088,0.5000)
    (1.0090,0.5333)
    (1.0097,0.5667)
    (1.0110,0.5667)
    (1.0138,0.6000)
    (1.0159,0.6333)
    (1.0172,0.6667)
    (1.0179,0.6667)
    (1.0209,0.7000)
    (1.0266,0.7000)
    (1.0266,0.7333)
    (1.0300,0.7667)
    (1.0364,0.7667)
    (1.0364,0.8000)
    (1.0461,0.8333)
    (1.0550,0.8333)
    (1.0652,0.8667)
    (1.0706,0.8667)
    (1.0971,0.9000)
    (1.1072,0.9333)
    (1.1271,0.9333)
    (1.1326,0.9667)
    (1.1548,1.0000)
    (1.2355,1.0000)
  };
  \addlegendentry{NMDT}
  \addplot+[mark=none, thick, green!40!gray, densely dotted] coordinates {
    (1.0000,0.3667)
    (1.0000,0.4000)
    (1.0001,0.4667)
    (1.0003,0.4667)
    (1.0003,0.5000)
    (1.0016,0.5000)
    (1.0019,0.5333)
    (1.0028,0.5333)
    (1.0032,0.5667)
    (1.0046,0.5667)
    (1.0047,0.6000)
    (1.0048,0.6000)
    (1.0051,0.6333)
    (1.0066,0.6333)
    (1.0088,0.6667)
    (1.0175,0.6667)
    (1.0179,0.7000)
    (1.0209,0.7000)
    (1.0223,0.7333)
    (1.0254,0.7667)
    (1.0266,0.7667)
    (1.0300,0.8000)
    (1.0461,0.8000)
    (1.0469,0.8333)
    (1.0470,0.8333)
    (1.0550,0.8667)
    (1.0652,0.8667)
    (1.0676,0.9000)
    (1.0971,0.9000)
    (1.1072,0.9333)
    (1.1096,0.9667)
    (1.1548,0.9667)
    (1.1766,1.0000)
    (1.2355,1.0000)
  };
  \addlegendentry{T-D-NMDT}
  \addplot+[mark=none, thick, blue!40!gray, densely dotted] coordinates {
    (1.0000,0.6333)
    (1.0000,0.7333)
    (1.0001,0.7333)
    (1.0003,0.7667)
    (1.0019,0.7667)
    (1.0023,0.8000)
    (1.0051,0.8000)
    (1.0054,0.8333)
    (1.0088,0.8333)
    (1.0090,0.8667)
    (1.0110,0.8667)
    (1.0138,0.9000)
    (1.0159,0.9333)
    (1.0179,0.9333)
    (1.0209,0.9667)
    (1.0255,0.9667)
    (1.0266,1.0000)
    (1.2355,1.0000)
  };
  \addlegendentry{T-NMDT}
  \draw node[right,draw,align=left] {$L=2$\\};
  \end{axis}
\end{tikzpicture}
            \end{center}
        \end{minipage}
    \end{center}
    \hfill
    \begin{center}
        \begin{minipage}{0.475\textwidth}
            \begin{center}
                 \begin{tikzpicture}
  \begin{axis}[const plot,
  cycle list={
  {blue!40!gray,solid},
  {red,dashed},
  {black,dotted},
  {brown,dashdotted}},
    xmin=1, xmax=1.14,
    ymin=-0.003, ymax=1.003,
    ymajorgrids,
    ytick={0,0.2,0.4,0.6,0.8,1.0},
    xlabel={$\tau$},
    ylabel={$P(\tau)$},
,
    legend pos={south east},
    legend style={font=\tiny},
    width=\textwidth
    ]
  \addplot+[mark=none, thick, green!40!gray, dashed] coordinates {
    (1.0000,0.2333)
    (1.0000,0.3333)
    (1.0001,0.3333)
    (1.0001,0.3667)
    (1.0002,0.4000)
    (1.0003,0.4000)
    (1.0003,0.4333)
    (1.0004,0.4333)
    (1.0004,0.5000)
    (1.0008,0.5000)
    (1.0009,0.5333)
    (1.0010,0.5333)
    (1.0011,0.5667)
    (1.0012,0.5667)
    (1.0013,0.6000)
    (1.0014,0.6000)
    (1.0014,0.6333)
    (1.0016,0.6667)
    (1.0025,0.6667)
    (1.0025,0.7000)
    (1.0034,0.7000)
    (1.0038,0.7333)
    (1.0040,0.7667)
    (1.0043,0.7667)
    (1.0043,0.8000)
    (1.0044,0.8333)
    (1.0055,0.8333)
    (1.0059,0.8667)
    (1.0061,0.8667)
    (1.0064,0.9000)
    (1.0064,0.9333)
    (1.0119,0.9333)
    (1.0124,0.9667)
    (1.0162,0.9667)
    (1.0198,1.0000)
    (1.1414,1.0000)
  };
  \addlegendentry{D-NMDT}
  \addplot+[mark=none, thick, blue!40!gray, dashed] coordinates {
    (1.0000,0.1667)
    (1.0000,0.2667)
    (1.0001,0.2667)
    (1.0001,0.3333)
    (1.0003,0.3333)
    (1.0004,0.3667)
    (1.0009,0.3667)
    (1.0010,0.4000)
    (1.0013,0.4000)
    (1.0014,0.4333)
    (1.0018,0.4333)
    (1.0018,0.4667)
    (1.0021,0.4667)
    (1.0021,0.5000)
    (1.0023,0.5333)
    (1.0025,0.5667)
    (1.0040,0.5667)
    (1.0043,0.6000)
    (1.0049,0.6000)
    (1.0050,0.6333)
    (1.0059,0.6333)
    (1.0060,0.6667)
    (1.0061,0.7000)
    (1.0064,0.7000)
    (1.0073,0.7333)
    (1.0077,0.7667)
    (1.0079,0.8000)
    (1.0086,0.8333)
    (1.0112,0.8667)
    (1.0114,0.8667)
    (1.0119,0.9000)
    (1.0198,0.9000)
    (1.0256,0.9333)
    (1.0308,0.9667)
    (1.0842,0.9667)
    (1.0871,1.0000)
    (1.1414,1.0000)
  };
  \addlegendentry{NMDT}
  \addplot+[mark=none, thick, green!40!gray, densely dotted] coordinates {
    (1.0000,0.5333)
    (1.0000,0.6333)
    (1.0001,0.6667)
    (1.0001,0.7667)
    (1.0002,0.7667)
    (1.0002,0.8000)
    (1.0003,0.8333)
    (1.0003,0.8667)
    (1.0004,0.8667)
    (1.0004,0.9333)
    (1.0008,0.9333)
    (1.0009,0.9667)
    (1.0308,0.9667)
    (1.0605,1.0000)
    (1.1414,1.0000)
  };
  \addlegendentry{T-D-NMDT}
  \addplot+[mark=none, thick, blue!40!gray, densely dotted] coordinates {
    (1.0000,0.4333)
    (1.0000,0.4667)
    (1.0001,0.4667)
    (1.0001,0.5333)
    (1.0004,0.5333)
    (1.0004,0.5667)
    (1.0008,0.6000)
    (1.0011,0.6000)
    (1.0012,0.6333)
    (1.0016,0.6333)
    (1.0018,0.6667)
    (1.0021,0.7000)
    (1.0025,0.7000)
    (1.0026,0.7333)
    (1.0029,0.7667)
    (1.0034,0.8000)
    (1.0044,0.8000)
    (1.0049,0.8333)
    (1.0050,0.8333)
    (1.0055,0.8667)
    (1.0073,0.8667)
    (1.0077,0.9000)
    (1.0112,0.9000)
    (1.0114,0.9333)
    (1.0124,0.9333)
    (1.0162,0.9667)
    (1.0605,0.9667)
    (1.0842,1.0000)
    (1.1414,1.0000)
  };
  \addlegendentry{T-NMDT}
  \draw node[right,draw,align=left] {$L=4$\\};
  \end{axis}
\end{tikzpicture}
            \end{center}
        \end{minipage}
        \hfill
        \begin{minipage}{0.475\textwidth}
            \begin{center}
                 \begin{tikzpicture}
  \begin{axis}[const plot,
  cycle list={
  {blue!40!gray,solid},
  {red,dashed},
  {black,dotted},
  {brown,dashdotted}},
    xmin=1, xmax=1.14,
    ymin=-0.003, ymax=1.003,
    ymajorgrids,
    ytick={0,0.2,0.4,0.6,0.8,1.0},
    xlabel={$\tau$},
    ylabel={$P(\tau)$},
,
    legend pos={south east},
    legend style={font=\tiny},
    width=\textwidth
    ]
  \addplot+[mark=none, thick, green!40!gray, dashed] coordinates {
    (1.0000,0.3333)
    (1.0000,0.4667)
    (1.0001,0.4667)
    (1.0001,0.6333)
    (1.0003,0.6333)
    (1.0003,0.7333)
    (1.0004,0.7333)
    (1.0004,0.7667)
    (1.0009,0.7667)
    (1.0014,0.8000)
    (1.0018,0.8000)
    (1.0019,0.8333)
    (1.0028,0.8333)
    (1.0036,0.8667)
    (1.0111,0.8667)
    (1.0120,0.9000)
    (1.0129,0.9000)
    (1.0145,0.9333)
    (1.0152,0.9333)
    (1.0186,0.9667)
    (1.0259,1.0000)
    (1.1371,1.0000)
  };
  \addlegendentry{D-NMDT}
  \addplot+[mark=none, thick, blue!40!gray, dashed] coordinates {
    (1.0000,0.1667)
    (1.0000,0.2333)
    (1.0001,0.2333)
    (1.0001,0.3333)
    (1.0003,0.3333)
    (1.0003,0.3667)
    (1.0004,0.4000)
    (1.0004,0.4667)
    (1.0006,0.5333)
    (1.0008,0.5667)
    (1.0009,0.6000)
    (1.0019,0.6000)
    (1.0020,0.6333)
    (1.0026,0.6333)
    (1.0028,0.6667)
    (1.0037,0.6667)
    (1.0043,0.7000)
    (1.0046,0.7000)
    (1.0052,0.7333)
    (1.0053,0.7333)
    (1.0055,0.7667)
    (1.0060,0.8000)
    (1.0088,0.8000)
    (1.0101,0.8333)
    (1.0109,0.8333)
    (1.0110,0.8667)
    (1.0120,0.8667)
    (1.0129,0.9000)
    (1.0145,0.9000)
    (1.0152,0.9333)
    (1.0259,0.9333)
    (1.0628,0.9667)
    (1.0804,1.0000)
    (1.1371,1.0000)
  };
  \addlegendentry{NMDT}
  \addplot+[mark=none, thick, green!40!gray, densely dotted] coordinates {
    (1.0000,0.5333)
    (1.0000,0.7333)
    (1.0001,0.7333)
    (1.0001,0.8333)
    (1.0003,0.8667)
    (1.0004,0.8667)
    (1.0004,0.9000)
    (1.0020,0.9000)
    (1.0026,0.9333)
    (1.0052,0.9333)
    (1.0053,0.9667)
    (1.0082,0.9667)
    (1.0084,1.0000)
    (1.1371,1.0000)
  };
  \addlegendentry{T-D-NMDT}
  \addplot+[mark=none, thick, blue!40!gray, densely dotted] coordinates {
    (1.0000,0.3333)
    (1.0000,0.5333)
    (1.0001,0.5667)
    (1.0001,0.6000)
    (1.0003,0.6000)
    (1.0003,0.6333)
    (1.0004,0.6333)
    (1.0004,0.6667)
    (1.0008,0.6667)
    (1.0008,0.7000)
    (1.0014,0.7000)
    (1.0018,0.7333)
    (1.0019,0.7333)
    (1.0020,0.7667)
    (1.0036,0.7667)
    (1.0037,0.8000)
    (1.0043,0.8000)
    (1.0046,0.8333)
    (1.0060,0.8333)
    (1.0082,0.8667)
    (1.0084,0.8667)
    (1.0088,0.9000)
    (1.0101,0.9000)
    (1.0109,0.9333)
    (1.0110,0.9333)
    (1.0111,0.9667)
    (1.0804,0.9667)
    (1.0830,1.0000)
    (1.1371,1.0000)
  };
  \addlegendentry{T-NMDT}
  \draw node[right,draw,align=left] {$L=6$\\};
  \end{axis}
\end{tikzpicture}
            \end{center}
        \end{minipage}
    \end{center}
    \hfill
    \caption{Performance profiles to dual bounds of \NMDT-based methods on sparse instances. }\label{dnmdt_vs_nmdt_sparse}
  \vspace*{\floatsep}%
    \begin{center}
        \begin{minipage}{0.475\textwidth}
            \begin{center}
                 \begin{tikzpicture}
  \begin{axis}[const plot,
  cycle list={
  {blue!40!gray,solid},
  {red,dashed},
  {black,dotted},
  {brown,dashdotted}},
    xmin=1, xmax=1.25,
    ymin=-0.003, ymax=1.003,
    ymajorgrids,
    ytick={0,0.2,0.4,0.6,0.8,1.0},
    xlabel={$\tau$},
    ylabel={$P(\tau)$},
,
    legend pos={south east},
    legend style={font=\tiny},
    width=\textwidth
    ]
  \addplot+[mark=none, thick, green!40!gray, dashed] coordinates {
    (1.0000,0.5000)
    (1.0000,0.5667)
    (1.0003,0.5667)
    (1.0005,0.6000)
    (1.0012,0.6000)
    (1.0015,0.6333)
    (1.0020,0.6333)
    (1.0022,0.6667)
    (1.0030,0.6667)
    (1.0046,0.7000)
    (1.0059,0.7000)
    (1.0060,0.7333)
    (1.0067,0.7333)
    (1.0086,0.7667)
    (1.0126,0.7667)
    (1.0185,0.8000)
    (1.0428,0.8000)
    (1.0430,0.8333)
    (1.0438,0.8333)
    (1.0511,0.8667)
    (1.0525,0.9000)
    (1.0676,0.9000)
    (1.0708,0.9333)
    (1.0789,0.9333)
    (1.0795,0.9667)
    (1.2154,0.9667)
    (1.25,1.0000)
  };
  \addlegendentry{D-NMDT}
  \addplot+[mark=none, thick, blue!40!gray, dashed] coordinates {
    (1.0000,0.5667)
    (1.0001,0.5667)
    (1.0003,0.6000)
    (1.0015,0.6000)
    (1.0020,0.6333)
    (1.0062,0.6333)
    (1.0067,0.6667)
    (1.0086,0.6667)
    (1.0117,0.7000)
    (1.0125,0.7333)
    (1.0306,0.7333)
    (1.0373,0.7667)
    (1.0610,0.7667)
    (1.0627,0.8000)
    (1.0726,0.8000)
    (1.0789,0.8333)
    (1.0805,0.8333)
    (1.0807,0.8667)
    (1.1012,0.8667)
    (1.1065,0.9000)
    (1.1693,0.9333)
    (1.1992,0.9667)
    (1.2065,0.9667)
    (1.2110,1.0000)
    (1.25,1.0000)
  };
  \addlegendentry{NMDT}
  \addplot+[mark=none, thick, green!40!gray, densely dotted] coordinates {
    (1.0000,0.5667)
    (1.0001,0.6000)
    (1.0005,0.6000)
    (1.0012,0.6333)
    (1.0020,0.6333)
    (1.0022,0.6667)
    (1.0030,0.7000)
    (1.0046,0.7000)
    (1.0059,0.7333)
    (1.0060,0.7333)
    (1.0062,0.7667)
    (1.0380,0.7667)
    (1.0387,0.8000)
    (1.0430,0.8000)
    (1.0438,0.8333)
    (1.0525,0.8333)
    (1.0536,0.8667)
    (1.0553,0.8667)
    (1.0610,0.9000)
    (1.2154,0.9000)
    (1.25,1.0000)
  };
  \addlegendentry{T-D-NMDT}
  \addplot+[mark=none, thick, blue!40!gray, densely dotted] coordinates {
    (1.0000,0.5667)
    (1.0125,0.5667)
    (1.0126,0.6000)
    (1.0185,0.6000)
    (1.0306,0.6333)
    (1.0373,0.6333)
    (1.0380,0.6667)
    (1.0387,0.6667)
    (1.0428,0.7000)
    (1.0536,0.7000)
    (1.0553,0.7333)
    (1.0627,0.7333)
    (1.0676,0.7667)
    (1.0708,0.7667)
    (1.0726,0.8000)
    (1.0795,0.8000)
    (1.0805,0.8333)
    (1.0807,0.8333)
    (1.1012,0.8667)
    (1.1992,0.8667)
    (1.2065,0.9000)
    (1.2110,0.9000)
    (1.2154,0.9333)
    (1.25,1.0000)
  };
  \addlegendentry{T-NMDT}
  \draw node[right,draw,align=left] {$L=1$\\};
  \end{axis}
\end{tikzpicture}
            \end{center}
        \end{minipage}
        \hfill
        \begin{minipage}{0.475\textwidth}
            \begin{center}
                 \begin{tikzpicture}
  \begin{axis}[const plot,
  cycle list={
  {blue!40!gray,solid},
  {red,dashed},
  {black,dotted},
  {brown,dashdotted}},
    xmin=1, xmax=1.25,
    ymin=-0.003, ymax=1.003,
    ymajorgrids,
    ytick={0,0.2,0.4,0.6,0.8,1.0},
    xlabel={$\tau$},
    ylabel={$P(\tau)$},
,
    legend pos={south east},
    legend style={font=\tiny},
    width=\textwidth
    ]
  \addplot+[mark=none, thick, green!40!gray, dashed] coordinates {
    (1.0000,0.5000)
    (1.0000,0.5667)
    (1.0005,0.6000)
    (1.0019,0.6000)
    (1.0029,0.6333)
    (1.0032,0.6667)
    (1.0036,0.7000)
    (1.0058,0.7000)
    (1.0063,0.7333)
    (1.0107,0.7333)
    (1.0107,0.7667)
    (1.0114,0.7667)
    (1.0130,0.8000)
    (1.0226,0.8000)
    (1.0263,0.8333)
    (1.1285,0.8333)
    (1.1620,0.8667)
    (1.1633,0.8667)
    (1.1682,0.9000)
    (1.1858,0.9000)
    (1.1918,0.9333)
    (1.2351,0.9333)
    (1.2448,0.9667)
    (1.5641,1.0000)
  };
  \addlegendentry{D-NMDT}
  \addplot+[mark=none, thick, blue!40!gray, dashed] coordinates {
    (1.0000,0.3667)
    (1.0000,0.4333)
    (1.0036,0.4333)
    (1.0044,0.4667)
    (1.0107,0.4667)
    (1.0114,0.5000)
    (1.0295,0.5000)
    (1.0471,0.5333)
    (1.0606,0.5333)
    (1.0664,0.5667)
    (1.0686,0.6000)
    (1.0811,0.6333)
    (1.0815,0.6333)
    (1.0854,0.6667)
    (1.0861,0.6667)
    (1.0870,0.7000)
    (1.0934,0.7000)
    (1.0959,0.7333)
    (1.1123,0.7667)
    (1.1125,0.8000)
    (1.1248,0.8000)
    (1.1257,0.8333)
    (1.1285,0.8667)
    (1.1813,0.8667)
    (1.1853,0.9000)
    (1.1918,0.9000)
    (1.1949,0.9333)
    (1.2014,0.9667)
    (1.2079,0.9667)
    (1.2229,1.0000)
    (1.5641,1.0000)
  };
  \addlegendentry{NMDT}
  \addplot+[mark=none, thick, green!40!gray, densely dotted] coordinates {
    (1.0000,0.6667)
    (1.0000,0.7333)
    (1.0032,0.7333)
    (1.0032,0.7667)
    (1.0044,0.7667)
    (1.0058,0.8000)
    (1.0063,0.8000)
    (1.0066,0.8333)
    (1.0088,0.8333)
    (1.0107,0.8667)
    (1.0130,0.8667)
    (1.0226,0.9000)
    (1.0263,0.9000)
    (1.0295,0.9333)
    (1.2229,0.9333)
    (1.2351,0.9667)
    (1.2448,0.9667)
    (1.5641,1.0000)
  };
  \addlegendentry{T-D-NMDT}
  \addplot+[mark=none, thick, blue!40!gray, densely dotted] coordinates {
    (1.0000,0.3333)
    (1.0000,0.4333)
    (1.0005,0.4333)
    (1.0019,0.4667)
    (1.0066,0.4667)
    (1.0088,0.5000)
    (1.0471,0.5000)
    (1.0508,0.5333)
    (1.0508,0.5667)
    (1.0606,0.6000)
    (1.0811,0.6000)
    (1.0815,0.6333)
    (1.0854,0.6333)
    (1.0861,0.6667)
    (1.0870,0.6667)
    (1.0915,0.7000)
    (1.0934,0.7333)
    (1.1125,0.7333)
    (1.1131,0.7667)
    (1.1187,0.8000)
    (1.1237,0.8333)
    (1.1248,0.8667)
    (1.1620,0.8667)
    (1.1633,0.9000)
    (1.1682,0.9000)
    (1.1813,0.9333)
    (1.1853,0.9333)
    (1.1858,0.9667)
    (1.2014,0.9667)
    (1.2079,1.0000)
    (1.5641,1.0000)
  };
  \addlegendentry{T-NMDT}
  \draw node[right,draw,align=left] {$L=2$\\};
  \end{axis}
\end{tikzpicture}
            \end{center}
        \end{minipage}
    \end{center}
    \hfill
    \begin{center}
        \begin{minipage}{0.475\textwidth}
            \begin{center}
                 \begin{tikzpicture}
  \begin{axis}[const plot,
  cycle list={
  {blue!40!gray,solid},
  {red,dashed},
  {black,dotted},
  {brown,dashdotted}},
    xmin=1, xmax=1.25,
    ymin=-0.003, ymax=1.003,
    ymajorgrids,
    ytick={0,0.2,0.4,0.6,0.8,1.0},
    xlabel={$\tau$},
    ylabel={$P(\tau)$},
,
    legend pos={south east},
    legend style={font=\tiny},
    width=\textwidth
    ]
  \addplot+[mark=none, thick, green!40!gray, dashed] coordinates {
    (1.0000,0.6000)
    (1.0001,0.6333)
    (1.0009,0.6333)
    (1.0017,0.6667)
    (1.0035,0.6667)
    (1.0036,0.7000)
    (1.0050,0.7000)
    (1.0058,0.7333)
    (1.0066,0.7333)
    (1.0087,0.7667)
    (1.0127,0.7667)
    (1.0137,0.8000)
    (1.0155,0.8333)
    (1.0251,0.8333)
    (1.0292,0.8667)
    (1.0296,0.8667)
    (1.0329,0.9000)
    (1.0337,0.9000)
    (1.0362,0.9333)
    (1.0754,0.9333)
    (1.0765,0.9667)
    (1.2310,0.9667)
    (1.2358,1.0000)
    (1.3760,1.0000)
  };
  \addlegendentry{D-NMDT}
  \addplot+[mark=none, thick, blue!40!gray, dashed] coordinates {
    (1.0000,0.3667)
    (1.0000,0.4000)
    (1.0001,0.4000)
    (1.0009,0.4333)
    (1.0036,0.4333)
    (1.0048,0.4667)
    (1.0058,0.4667)
    (1.0066,0.5000)
    (1.0087,0.5000)
    (1.0097,0.5333)
    (1.0222,0.5333)
    (1.0235,0.5667)
    (1.0292,0.5667)
    (1.0295,0.6000)
    (1.0296,0.6333)
    (1.0329,0.6333)
    (1.0337,0.6667)
    (1.0470,0.6667)
    (1.0574,0.7000)
    (1.0627,0.7333)
    (1.0645,0.7667)
    (1.0765,0.7667)
    (1.0819,0.8000)
    (1.0881,0.8333)
    (1.1020,0.8333)
    (1.1043,0.8667)
    (1.1169,0.8667)
    (1.1197,0.9000)
    (1.1216,0.9333)
    (1.3760,1.0000)
  };
  \addlegendentry{NMDT}
  \addplot+[mark=none, thick, green!40!gray, densely dotted] coordinates {
    (1.0000,0.6000)
    (1.0000,0.6333)
    (1.0009,0.6333)
    (1.0009,0.6667)
    (1.0017,0.6667)
    (1.0030,0.7000)
    (1.0031,0.7000)
    (1.0035,0.7333)
    (1.0048,0.7333)
    (1.0050,0.7667)
    (1.0126,0.7667)
    (1.0127,0.8000)
    (1.0190,0.8000)
    (1.0222,0.8333)
    (1.0235,0.8333)
    (1.0237,0.8667)
    (1.0251,0.9000)
    (1.1043,0.9000)
    (1.1102,0.9333)
    (1.1216,0.9333)
    (1.1638,0.9667)
    (1.2234,1.0000)
    (1.3760,1.0000)
  };
  \addlegendentry{T-D-NMDT}
  \addplot+[mark=none, thick, blue!40!gray, densely dotted] coordinates {
    (1.0000,0.1667)
    (1.0000,0.4667)
    (1.0030,0.4667)
    (1.0031,0.5000)
    (1.0048,0.5000)
    (1.0048,0.5333)
    (1.0097,0.5333)
    (1.0113,0.5667)
    (1.0126,0.6000)
    (1.0155,0.6000)
    (1.0190,0.6333)
    (1.0362,0.6333)
    (1.0367,0.6667)
    (1.0470,0.7000)
    (1.0645,0.7000)
    (1.0703,0.7333)
    (1.0743,0.7667)
    (1.0754,0.8000)
    (1.0881,0.8000)
    (1.0921,0.8333)
    (1.1020,0.8667)
    (1.1102,0.8667)
    (1.1169,0.9000)
    (1.2234,0.9000)
    (1.2310,0.9333)
    (1.2358,0.9333)
    (1.3760,1.0000)
  };
  \addlegendentry{T-NMDT}
  \draw node[right,draw,align=left] {$L=4$\\};
  \end{axis}
\end{tikzpicture}
            \end{center}
        \end{minipage}
        \hfill
        \begin{minipage}{0.475\textwidth}
            \begin{center}
                 \begin{tikzpicture}
  \begin{axis}[const plot,
  cycle list={
  {blue!40!gray,solid},
  {red,dashed},
  {black,dotted},
  {brown,dashdotted}},
    xmin=1, xmax=1.25,
    ymin=-0.003, ymax=1.003,
    ymajorgrids,
    ytick={0,0.2,0.4,0.6,0.8,1.0},
    xlabel={$\tau$},
    ylabel={$P(\tau)$},
,
    legend pos={south east},
    legend style={font=\tiny},
    width=\textwidth
    ]
  \addplot+[mark=none, thick, green!40!gray, dashed] coordinates {
    (1.0000,0.5333)
    (1.0000,0.5667)
    (1.0002,0.5667)
    (1.0003,0.6000)
    (1.0008,0.6333)
    (1.0019,0.6333)
    (1.0027,0.6667)
    (1.0038,0.6667)
    (1.0039,0.7000)
    (1.0044,0.7333)
    (1.0136,0.7333)
    (1.0136,0.7667)
    (1.0222,0.7667)
    (1.0229,0.8000)
    (1.0250,0.8333)
    (1.0326,0.8333)
    (1.0354,0.8667)
    (1.0685,0.8667)
    (1.0696,0.9000)
    (1.0983,0.9000)
    (1.1094,0.9333)
    (1.1595,0.9333)
    (1.1867,0.9667)
    (1.6758,1.0000)
  };
  \addlegendentry{D-NMDT}
  \addplot+[mark=none, thick, blue!40!gray, dashed] coordinates {
    (1.0000,0.4000)
    (1.0000,0.4333)
    (1.0079,0.4333)
    (1.0096,0.4667)
    (1.0102,0.5000)
    (1.0110,0.5333)
    (1.0129,0.5667)
    (1.0136,0.5667)
    (1.0181,0.6000)
    (1.0198,0.6333)
    (1.0199,0.6667)
    (1.0203,0.6667)
    (1.0215,0.7000)
    (1.0222,0.7333)
    (1.0269,0.7333)
    (1.0283,0.7667)
    (1.0360,0.7667)
    (1.0433,0.8000)
    (1.0540,0.8000)
    (1.0544,0.8333)
    (1.0616,0.8667)
    (1.0685,0.9000)
    (1.0707,0.9000)
    (1.0719,0.9333)
    (1.1094,0.9333)
    (1.1595,0.9667)
    (1.1903,0.9667)
    (1.1943,1.0000)
    (1.6758,1.0000)
  };
  \addlegendentry{NMDT}
  \addplot+[mark=none, thick, green!40!gray, densely dotted] coordinates {
    (1.0000,0.3333)
    (1.0000,0.6333)
    (1.0001,0.6667)
    (1.0002,0.7000)
    (1.0027,0.7000)
    (1.0030,0.7333)
    (1.0044,0.7333)
    (1.0062,0.7667)
    (1.0129,0.7667)
    (1.0136,0.8000)
    (1.0286,0.8000)
    (1.0326,0.8333)
    (1.0355,0.8333)
    (1.0360,0.8667)
    (1.0866,0.8667)
    (1.0899,0.9000)
    (1.0983,0.9333)
    (1.1867,0.9333)
    (1.1903,0.9667)
    (1.6758,1.0000)
  };
  \addlegendentry{T-D-NMDT}
  \addplot+[mark=none, thick, blue!40!gray, densely dotted] coordinates {
    (1.0000,0.1333)
    (1.0000,0.4667)
    (1.0002,0.4667)
    (1.0002,0.5000)
    (1.0008,0.5000)
    (1.0019,0.5333)
    (1.0030,0.5333)
    (1.0038,0.5667)
    (1.0062,0.5667)
    (1.0064,0.6000)
    (1.0079,0.6333)
    (1.0199,0.6333)
    (1.0203,0.6667)
    (1.0250,0.6667)
    (1.0269,0.7000)
    (1.0283,0.7000)
    (1.0286,0.7333)
    (1.0354,0.7333)
    (1.0355,0.7667)
    (1.0433,0.7667)
    (1.0437,0.8000)
    (1.0540,0.8333)
    (1.0540,0.8667)
    (1.0696,0.8667)
    (1.0707,0.9000)
    (1.0719,0.9000)
    (1.0866,0.9333)
    (1.1943,0.9333)
    (1.2384,0.9667)
    (1.6758,1.0000)
  };
  \addlegendentry{T-NMDT}
  \draw node[right,draw,align=left] {$L=6$\\};
  \end{axis}
\end{tikzpicture}
            \end{center}
        \end{minipage}
    \end{center}
    \hfill
    \caption{Performance profiles to dual bounds of NMDT-based methods on dense instances. }\label{dnmdt_vs_nmdt_dense}
\end{figure}

For dense instances, \DNMDT and T-\DNMDT are clearly superior to \NMDT and T-\NMDT; see \cref{dnmdt_vs_nmdt_dense}.
Regardless of the relaxation depth, the new methods yield the tightest dual bounds, with T-\DNMDT being superior to \DNMDT only in case of $L=2$, where the tightened version T-\DNMDT is able to find the best dual bound for roughly 10\% more instances than \DNMDT.
Tightening the \NMDT method does not deliver better bounds, in fact, T-\NMDT is surpassed by \NMDT for $L=1$.

Regarding the run times of the various \NMDT-based approaches, \cref{table_sgm_nmdt_all} shows significantly lower run times for \DNMDT and T-\DNMDT.
Again, T-\DNMDT is slightly ahead of \DNMDT.
\begin{table}
    \caption{Shifted geometric mean for run times on all 60 instances in NMDT-based MIP relaxations.}
    \label{table_sgm_nmdt_all}
    \centering
    \begin{tabular} {l r  r  r  r}
        \toprule
        {\qquad} & {\quad \NMDT} & {\quad T-\NMDT} & {\quad \DNMDT} & {\quad T-\DNMDT} \\  
        \midrule
         L1 & 82.09 & 83.42 & 68.91 & \textbf{51.13} \\
         L2 & 234.73 & 231.66 & 87.3 & \textbf{78.75} \\
         L4 & 450.63 & 395.93 & 196.4 & \textbf{192.97} \\
         L6 & 851.91 & 713.73 & 443.49 & \textbf{429.23} \\
        \bottomrule
    \end{tabular}
\end{table}

In \cref{table_feasible_nmdt_all}, we can see that the QP heuristic (IPOPT) we mentioned at the beginning of this section delivers high-quality feasible solutions for the original (MIQC-)QP instances.
With increasing $L$ values, IPOPT is able to find more feasible solutions with all \NMDT-based methods quite similarly.
For $L=6$, T-\DNMDT combined with IPOPT yields feasible solutions for $50$ out of $60$ benchmark instances, 47 of which have a relative optimality gap below 1\% and 46 of which are even globally optimal, i.e., which have a gap below $0.01$\%.

\begin{table}
    \caption{Number of instances with feasible solutions found with different relative optimality gaps. The first number corresponds to a gap of less than 0.01\%, the second to a gap of less than 1\% and the third number indicates finding a feasible solution.}
    \label{table_feasible_nmdt_all}
    \centering
    \begin{tabular} {l r  r  r  r}
        \toprule
        {\qquad} & {\qquad \NMDT} & {\qquad T-\NMDT} & {\qquad \DNMDT} & {\qquad T-\DNMDT} \\  
        \midrule
        L1 & \textbf{32}/34/40 & 31/\textbf{35}/41 & 29/33/\textbf{42} & 29/33/40 \\
        L2 & 32/37/45 & \textbf{34}/37/43 & \textbf{34}/38/42 & \textbf{34}/37/42 \\
        L4 & 42/44/48 & 39/44/48 & 37/42/49 & \textbf{45}/\textbf{47}/\textbf{51} \\
        L6 & 43/45/48 & 42/43/47 & 44/\textbf{47}/\textbf{50} & \textbf{46}/\textbf{47}/\textbf{50} \\
        \bottomrule
    \end{tabular}
\end{table}

In summary, both T-\DNMDT and \DNMDT are clearly superior to the previously known \NMDT approach. 
The double discretization and the associated reduction in the number of binary variables while maintaining the same relaxation error are most likely the reason for this. 
Surprisingly, the tightening of the lower bounds in the univariate quadratic terms and the resulting introduction of new constraints does not lead to higher run times. 
Thus, the latter is recommended.
Moreover, T-\DNMDT is slightly ahead of the other methods in computing good solutions for the MIP relaxations that are used by the NLP solver IPOPT to find feasible solutions for the original MIQCQP instances.
Altogether, we consider T-\DNMDT to be the winner among the \NMDT-based methods.

\subsubsection{Comparison with state-of-the-art MIQCQP Solver Gurobi}
\label{subsec:comp_with_gurobi}

Finally, we compare the two winners T-\DNMDT and \HybS of the \NMDT-based and separable Methods (Part I) with the state-of-the-art MIQCQP solver Gurobi 9.5.1.
We perform the comparison in two ways. 
Firstly, with Gurobi's default settings, and secondly, with cuts disabled, i.e., we set the parameter \emph{"Cuts = 0"}.
The reason for running Gurobi again with cuts turned off is that cuts are one of the most important components of MIQCQP/MIP solvers that rely on the structure of the problem.
While constructing the MIP relaxations with T-\DNMDT and \HybS, the original problem is transformed in such a way that Gurobi can no longer recognize the original quadratic structure of the problem. 
However, many cuts would still be valid and applicable in the MIP relaxations, for instance, RLT and PSD cuts.

We start our comparison with showing performance profiles for Gurobi, T-\DNMDT, \HybS, and their variants without cuts ("-NC") on all instances in \cref{gurobi_all}.
As expected, Gurobi performs best for all $L$ values, followed by its variant without cuts in second place.
However, as the depth $L$ increases, the MIP relaxations provide gradually tighter dual bounds.
For $L=6$, T-\DNMDT and \HybS are able to find the best dual bounds for more than 50\% of the cases, while Gurobi delivers the best bounds for roughly 90\% and its variant without cuts for about 70\% of the cases.
Surprisingly, in contrast to T-\DNMDT, disabling cuts in case of \HybS has little effect on the quality of the dual bounds.

As before, we divide the benchmark set into sparse and dense instances.
For sparse instances, the dual bounds computed by T-\DNMDT and \HybS become progressively tighter with increasing $L$; see \cref{gurobi_sparse}.
For $L=4$ and $L=6$, T-\DNMDT and \HybS are able to find the best dual bounds in about 60\% of the instances, while Gurobi delivers the best bounds for roughly 80\%.
Compared to Gurobi-NC, our new methods T-\DNMDT, \HybS, and most notably \HybS-NC perform almost equally well.

In the case of dense instances, a different picture emerges, see \cref{gurobi_dense}.
Again, Gurobi and also Gurobi-NC are dominant for all approximation depths.
However, for $L=1$, T-\DNMDT delivers dual bounds that are within a factor $1.1$ of the dual bounds provided by the variant of Gurobi without cuts.
With higher $L$ values, T-\DNMDT, \HybS, and \HybS-NC compute in about 40\% of the cases the best bounds, while Gurobi yields the best bounds in all cases and Gurobi-NC for roughly 70\% of the instances.

In \cref{table_sgm_gurobi_all} we show the shifted geometric mean values of the run times for solving all instances with Gurobi and the corresponding MIP relaxations constructed with T-\DNMDT and \HybS.
The variants of Gurobi, T-\DNMDT, and \HybS without cuts are also contained.
Gurobi has significantly shorter run times than all other approaches.
However, with $L=1$ and $L=2$, T-\DNMDT, \HybS, T-\DNMDT-NC and \HybS-NC are somewhat faster than Gurobi-NC.
\begin{remark}
Note, that for calculating the shifted geometric mean only those instances are used for which at least one method computed the optimal solution within the run time limit of 8 hours.
Since with higher $L$ values the complexity of the MIP relaxations increases, fewer instances are solved to optimality by T-\DNMDT and \HybS.
Therefore, the shifted geometric mean decreases for Gurobi and Gurobi-NC with higher $L$ values.
This inherent nature of the shifted geometric mean is also the reason why we see different values in  \cref{table_sgm_gurobi_all,table_sgm_nmdt_all} for the same methods.\hfill$\diamond$ 
\end{remark}

In combination with IPOPT as a QP heuristic, T-\DNMDT, \HybS, and their variants without cuts are competitive with Gurobi for high $L$ values when it comes to finding feasible solutions, as \cref{table_feasible_gurobi_all} shows.
\HybS-NC with IPOPT is able to find feasible with a relative optimality gap below 1\% for 48 out of $60$ benchmark instances, while Gurobi finds 50 feasible solutions with a gap below 1\%.
T-\DNMDT computes 46 solutions that are globally optimal, whereas Gurobi achieves this for 50 instances.
Surprisingly, the variant without cuts of \HybS delivers more feasible solutions than its variant with cuts enabled.
Finally, we note that some MIQCQP instances have been solved to global optimality by the MIP relaxation methods, while Gurobi reached the run time limit of 8 hours.
For instance, T-\DNMDT with IPOPT is able to solve the QPLIB instance ``QPLIB\_0698" to global optimality for $L \in \{2,4,6\}$ with a run time below 5 minutes, while Gurobi has a relative optimality gap of more than 5\% after a run time of 8 hours.

Overall, the comparison with Gurobi as a state-of-the-art MIQCQP solver has shown that the new methods T-\DNMDT and \HybS can be relevant for practical applications. 
For sparse instances, the dual bounds provided by T-\DNMDT and \HybS are of similar quality to those provided by Gurobi. 
In terms of MIQCQP-feasible solutions, for most instances the two methods are able to find very high quality solutions in combination with IPOPT as NLP solver.

Moreover, there is still plenty of room for improvement.
First, numerical studies have shown before that an adaptive refinement of nonlinearities drastically decreases run times for solving MINLPs by piecewise linear MIP relaxations; see \cite{Burlacu-et-al:2020} for example.
Hence, an approach with an adaptive refinement of the approximation depth $L$ is even more promising.
Second, \HybS and its variant without cuts \HybS-NC have performed very similarly in our computational study.
In addition, \HybS-NC was relatively close to Gurobi-NC in both solution quality and dual bounds for the MIQCQPs.
Since most MIQCQP-specific cuts can still be integrated into the \HybS approach, we believe that \HybS can be further improved by embedding it in a branch-and-cut solution framework that is able to add MIQCQP-specific cuts, such as BQP and PSD cuts, to the MIP relaxations.
In this way, we obtain both tighter dual bounds and MIP relaxation solutions that are more likely to yield feasible solutions for the MIQCQP in combination with IPOPT.

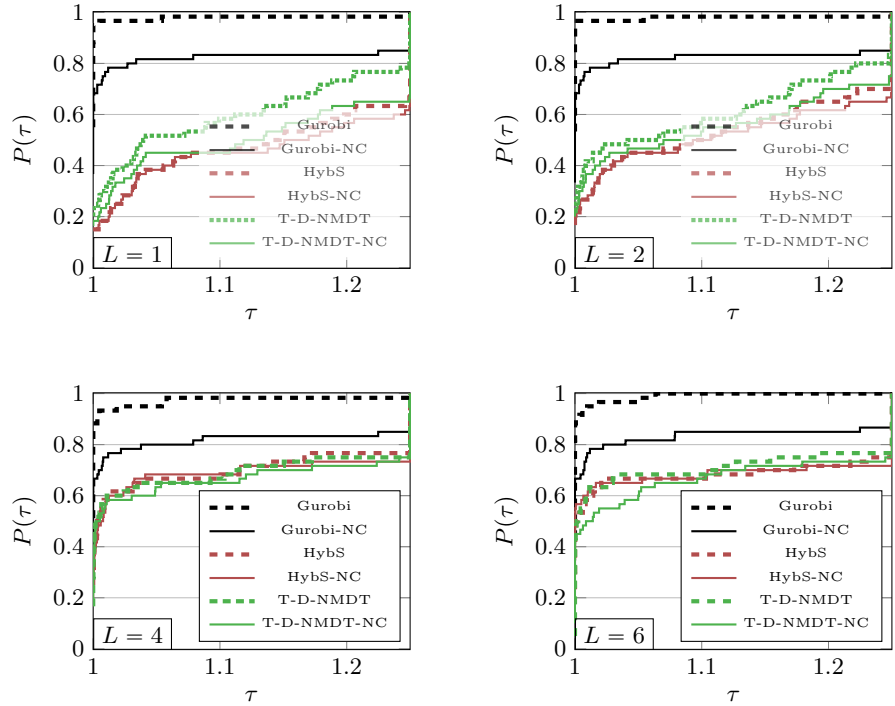
\begin{figure}
    \begin{center}
        \begin{minipage}{0.475\textwidth}
            \begin{center}
                 \begin{tikzpicture}
  \begin{axis}[const plot,
  cycle list={
  {blue,solid},
  {red!40!gray,dashed},
  {black,dotted},
  {brown,dashdotted},
  {green!40!gray!80!black,dashdotdotted},
  {magenta!80!black,densely dotted}},
    xmin=1, xmax=1.25,
    ymin=-0.003, ymax=1.003,
    ymajorgrids,
    ytick={0,0.2,0.4,0.6,0.8,1.0},
    xlabel={$\tau$},
    ylabel={$P(\tau)$},
,
    legend pos={south east},
    legend style={font=\tiny,fill opacity=0.7,draw=none},
    width=\textwidth
    ]
  \addplot+[mark=none, ultra thick, black, dashed] coordinates {
    (1.0000,0.5500)
    (1.0000,0.8833)
    (1.0001,0.8833)
    (1.0001,0.9500)
    (1.0049,0.9500)
    (1.0049,0.9667)
    (1.0415,0.9667)
    (1.0542,0.9833)
    (1.25,1.0000)
  };
  \addlegendentry{Gurobi}
  \addplot+[mark=none, thick, black, solid] coordinates {
    (1.0000,0.3667)
    (1.0000,0.6500)
    (1.0001,0.6667)
    (1.0001,0.6833)
    (1.0032,0.7000)
    (1.0032,0.7167)
    (1.0061,0.7167)
    (1.0068,0.7333)
    (1.0075,0.7500)
    (1.0094,0.7667)
    (1.0102,0.7667)
    (1.0118,0.7833)
    (1.0244,0.7833)
    (1.0270,0.8000)
    (1.0329,0.8000)
    (1.0339,0.8167)
    (1.0771,0.8167)
    (1.0789,0.8333)
    (1.2067,0.8333)
    (1.2248,0.8500)
    (1.25,1.0000)
  };
  \addlegendentry{Gurobi-NC}
  \addplot+[mark=none, ultra thick, red!40!gray, dashed] coordinates {
    (1.0000,0.1500)
    (1.0042,0.1500)
    (1.0043,0.1667)
    (1.0049,0.1833)
    (1.0118,0.1833)
    (1.0122,0.2000)
    (1.0127,0.2000)
    (1.0130,0.2167)
    (1.0141,0.2167)
    (1.0141,0.2333)
    (1.0146,0.2500)
    (1.0221,0.2500)
    (1.0222,0.2667)
    (1.0232,0.2833)
    (1.0301,0.2833)
    (1.0303,0.3000)
    (1.0304,0.3000)
    (1.0314,0.3167)
    (1.0321,0.3167)
    (1.0329,0.3333)
    (1.0340,0.3333)
    (1.0340,0.3500)
    (1.0346,0.3500)
    (1.0351,0.3667)
    (1.0394,0.3667)
    (1.0415,0.3833)
    (1.0542,0.3833)
    (1.0555,0.4000)
    (1.0631,0.4167)
    (1.0645,0.4333)
    (1.0719,0.4333)
    (1.0771,0.4500)
    (1.0993,0.4500)
    (1.1068,0.4667)
    (1.1186,0.4667)
    (1.1318,0.4833)
    (1.1366,0.4833)
    (1.1377,0.5000)
    (1.1471,0.5167)
    (1.1508,0.5167)
    (1.1508,0.5333)
    (1.1693,0.5333)
    (1.1700,0.5500)
    (1.1790,0.5500)
    (1.1790,0.5667)
    (1.1885,0.5667)
    (1.1885,0.5833)
    (1.1979,0.6000)
    (1.2058,0.6000)
    (1.2058,0.6167)
    (1.2067,0.6333)
    (1.2426,0.6333)
    (1.2460,0.6500)
    (1.2494,0.6667)
    (1.25,1.0000)
  };
  \addlegendentry{HybS}
  \addplot+[mark=none, thick, red!40!gray, solid] coordinates {
    (1.0000,0.1500)
    (1.0039,0.1500)
    (1.0042,0.1667)
    (1.0043,0.1667)
    (1.0049,0.1833)
    (1.0118,0.1833)
    (1.0122,0.2000)
    (1.0130,0.2000)
    (1.0131,0.2167)
    (1.0141,0.2167)
    (1.0141,0.2333)
    (1.0159,0.2333)
    (1.0172,0.2500)
    (1.0175,0.2500)
    (1.0221,0.2667)
    (1.0232,0.2667)
    (1.0235,0.2833)
    (1.0303,0.2833)
    (1.0304,0.3000)
    (1.0314,0.3167)
    (1.0321,0.3167)
    (1.0329,0.3333)
    (1.0340,0.3333)
    (1.0340,0.3500)
    (1.0346,0.3500)
    (1.0351,0.3667)
    (1.0394,0.3667)
    (1.0415,0.3833)
    (1.0555,0.3833)
    (1.0555,0.4000)
    (1.0631,0.4000)
    (1.0631,0.4167)
    (1.0645,0.4333)
    (1.0719,0.4333)
    (1.0771,0.4500)
    (1.1377,0.4500)
    (1.1377,0.4667)
    (1.1471,0.4667)
    (1.1471,0.4833)
    (1.1489,0.4833)
    (1.1508,0.5000)
    (1.1693,0.5000)
    (1.1700,0.5167)
    (1.1789,0.5167)
    (1.1790,0.5333)
    (1.1885,0.5500)
    (1.1979,0.5667)
    (1.2052,0.5667)
    (1.2058,0.5833)
    (1.2248,0.5833)
    (1.2371,0.6000)
    (1.2426,0.6000)
    (1.2460,0.6167)
    (1.2494,0.6333)
    (1.25,1.0000)
  };
  \addlegendentry{HybS-NC}
  \addplot+[mark=none, ultra thick, green!40!gray, densely dotted] coordinates {
    (1.0000,0.2000)
    (1.0000,0.2167)
    (1.0001,0.2167)
    (1.0001,0.2333)
    (1.0032,0.2333)
    (1.0039,0.2500)
    (1.0043,0.2500)
    (1.0049,0.2667)
    (1.0050,0.2833)
    (1.0061,0.3000)
    (1.0094,0.3000)
    (1.0102,0.3167)
    (1.0122,0.3167)
    (1.0124,0.3333)
    (1.0141,0.3333)
    (1.0141,0.3500)
    (1.0146,0.3500)
    (1.0153,0.3667)
    (1.0159,0.3833)
    (1.0232,0.3833)
    (1.0232,0.4000)
    (1.0301,0.4000)
    (1.0301,0.4167)
    (1.0314,0.4167)
    (1.0321,0.4333)
    (1.0339,0.4333)
    (1.0340,0.4500)
    (1.0346,0.4667)
    (1.0374,0.4667)
    (1.0374,0.4833)
    (1.0394,0.5000)
    (1.0415,0.5167)
    (1.0645,0.5167)
    (1.0719,0.5333)
    (1.0789,0.5333)
    (1.0885,0.5500)
    (1.0888,0.5667)
    (1.0993,0.5833)
    (1.1079,0.5833)
    (1.1120,0.6000)
    (1.1318,0.6000)
    (1.1345,0.6167)
    (1.1366,0.6167)
    (1.1366,0.6333)
    (1.1508,0.6333)
    (1.1517,0.6500)
    (1.1526,0.6667)
    (1.1693,0.6833)
    (1.1773,0.6833)
    (1.1773,0.7000)
    (1.1789,0.7167)
    (1.1885,0.7167)
    (1.1885,0.7333)
    (1.1979,0.7333)
    (1.2052,0.7500)
    (1.2058,0.7500)
    (1.2058,0.7667)
    (1.2371,0.7667)
    (1.2426,0.7833)
    (1.2460,0.7833)
    (1.2494,0.8000)
    (1.25,1.0000)
  };
  \addlegendentry{T-D-NMDT}
  \addplot+[mark=none, thick, green!40!gray, solid] coordinates {
    (1.0000,0.1667)
    (1.0001,0.1667)
    (1.0001,0.1833)
    (1.0039,0.1833)
    (1.0039,0.2000)
    (1.0043,0.2000)
    (1.0049,0.2167)
    (1.0050,0.2167)
    (1.0061,0.2333)
    (1.0094,0.2333)
    (1.0102,0.2500)
    (1.0124,0.2500)
    (1.0125,0.2667)
    (1.0127,0.2833)
    (1.0131,0.2833)
    (1.0141,0.3000)
    (1.0146,0.3000)
    (1.0153,0.3167)
    (1.0172,0.3167)
    (1.0175,0.3333)
    (1.0235,0.3333)
    (1.0244,0.3500)
    (1.0270,0.3500)
    (1.0301,0.3667)
    (1.0321,0.3667)
    (1.0321,0.3833)
    (1.0340,0.3833)
    (1.0346,0.4000)
    (1.0351,0.4000)
    (1.0374,0.4167)
    (1.0394,0.4333)
    (1.0415,0.4500)
    (1.1068,0.4500)
    (1.1079,0.4667)
    (1.1120,0.4833)
    (1.1186,0.5000)
    (1.1345,0.5000)
    (1.1346,0.5167)
    (1.1366,0.5333)
    (1.1471,0.5333)
    (1.1489,0.5500)
    (1.1517,0.5500)
    (1.1517,0.5667)
    (1.1526,0.5667)
    (1.1693,0.5833)
    (1.1700,0.5833)
    (1.1773,0.6000)
    (1.1789,0.6167)
    (1.1885,0.6167)
    (1.1885,0.6333)
    (1.2058,0.6333)
    (1.2058,0.6500)
    (1.2494,0.6500)
    (1.2494,0.6667)
    (1.25,1.0000)
  };
  \addlegendentry{T-D-NMDT-NC}
  \draw node[right,draw,align=left] {$L=1$\\};
  \end{axis}
\end{tikzpicture}
            \end{center}
        \end{minipage}
        \hfill
        \begin{minipage}{0.475\textwidth}
            \begin{center}
                 \begin{tikzpicture}
  \begin{axis}[const plot,
  cycle list={
  {blue,solid},
  {red!40!gray,dashed},
  {black,dotted},
  {brown,dashdotted},
  {green!40!gray!80!black,dashdotdotted},
  {magenta!80!black,densely dotted}},
    xmin=1, xmax=1.25,
    ymin=-0.003, ymax=1.003,
    ymajorgrids,
    ytick={0,0.2,0.4,0.6,0.8,1.0},
    xlabel={$\tau$},
    ylabel={$P(\tau)$},
,
    legend pos={south east},
    legend style={font=\tiny, fill opacity=0.7, draw=none},
    width=\textwidth
    ]
  \addplot+[mark=none, ultra thick, black, dashed] coordinates {
    (1.0000,0.5333)
    (1.0000,0.9000)
    (1.0001,0.9000)
    (1.0001,0.9667)
    (1.0415,0.9667)
    (1.0542,0.9833)
    (1.25,1.0000)
  };
  \addlegendentry{Gurobi}
  \addplot+[mark=none, thick, black, solid] coordinates {
    (1.0000,0.3333)
    (1.0000,0.6333)
    (1.0001,0.6500)
    (1.0001,0.6833)
    (1.0031,0.6833)
    (1.0032,0.7000)
    (1.0032,0.7167)
    (1.0041,0.7167)
    (1.0044,0.7333)
    (1.0059,0.7333)
    (1.0068,0.7500)
    (1.0070,0.7500)
    (1.0075,0.7667)
    (1.0100,0.7667)
    (1.0118,0.7833)
    (1.0237,0.7833)
    (1.0270,0.8000)
    (1.0308,0.8000)
    (1.0339,0.8167)
    (1.0698,0.8167)
    (1.0789,0.8333)
    (1.2227,0.8333)
    (1.2248,0.8500)
    (1.25,1.0000)
  };
  \addlegendentry{Gurobi-NC}
  \addplot+[mark=none,ultra thick, red!40!gray, dashed] coordinates {
    (1.0000,0.1667)
    (1.0000,0.2000)
    (1.0001,0.2000)
    (1.0007,0.2167)
    (1.0034,0.2167)
    (1.0036,0.2333)
    (1.0040,0.2667)
    (1.0086,0.2667)
    (1.0098,0.2833)
    (1.0118,0.2833)
    (1.0140,0.3000)
    (1.0141,0.3000)
    (1.0141,0.3167)
    (1.0142,0.3167)
    (1.0149,0.3333)
    (1.0158,0.3333)
    (1.0185,0.3500)
    (1.0196,0.3667)
    (1.0221,0.3667)
    (1.0221,0.3833)
    (1.0271,0.3833)
    (1.0287,0.4000)
    (1.0308,0.4167)
    (1.0339,0.4167)
    (1.0372,0.4333)
    (1.0415,0.4500)
    (1.0697,0.4500)
    (1.0698,0.4667)
    (1.0812,0.4667)
    (1.0827,0.4833)
    (1.0861,0.4833)
    (1.0930,0.5000)
    (1.1075,0.5000)
    (1.1076,0.5167)
    (1.1158,0.5167)
    (1.1159,0.5333)
    (1.1241,0.5500)
    (1.1366,0.5500)
    (1.1366,0.5667)
    (1.1468,0.5667)
    (1.1468,0.5833)
    (1.1540,0.5833)
    (1.1568,0.6000)
    (1.1693,0.6000)
    (1.1693,0.6167)
    (1.1709,0.6167)
    (1.1750,0.6333)
    (1.1775,0.6333)
    (1.1775,0.6500)
    (1.2018,0.6500)
    (1.2131,0.6667)
    (1.2163,0.6833)
    (1.2218,0.6833)
    (1.2227,0.7000)
    (1.2477,0.7000)
    (1.2494,0.7167)
    (1.25,1.0000)
  };
  \addlegendentry{HybS}
  \addplot+[mark=none, thick, red!40!gray, solid] coordinates {
    (1.0000,0.1833)
    (1.0000,0.2000)
    (1.0008,0.2000)
    (1.0013,0.2167)
    (1.0034,0.2167)
    (1.0036,0.2333)
    (1.0038,0.2500)
    (1.0040,0.2500)
    (1.0041,0.2667)
    (1.0098,0.2667)
    (1.0100,0.2833)
    (1.0140,0.2833)
    (1.0141,0.3000)
    (1.0149,0.3000)
    (1.0153,0.3167)
    (1.0154,0.3333)
    (1.0158,0.3333)
    (1.0185,0.3500)
    (1.0196,0.3500)
    (1.0196,0.3667)
    (1.0198,0.3667)
    (1.0221,0.3833)
    (1.0271,0.3833)
    (1.0287,0.4000)
    (1.0308,0.4000)
    (1.0308,0.4167)
    (1.0372,0.4167)
    (1.0372,0.4333)
    (1.0415,0.4500)
    (1.0789,0.4500)
    (1.0812,0.4667)
    (1.0827,0.4667)
    (1.0827,0.4833)
    (1.0930,0.4833)
    (1.0930,0.5000)
    (1.1012,0.5000)
    (1.1075,0.5167)
    (1.1076,0.5167)
    (1.1158,0.5333)
    (1.1356,0.5333)
    (1.1366,0.5500)
    (1.1436,0.5500)
    (1.1468,0.5667)
    (1.1688,0.5667)
    (1.1693,0.5833)
    (1.1709,0.5833)
    (1.1750,0.6000)
    (1.1775,0.6000)
    (1.1775,0.6167)
    (1.2131,0.6167)
    (1.2131,0.6333)
    (1.2163,0.6500)
    (1.2248,0.6500)
    (1.2458,0.6667)
    (1.2477,0.6667)
    (1.2494,0.6833)
    (1.25,1.0000)
  };
  \addlegendentry{HybS-NC}
  \addplot+[mark=none, ultra thick, green!40!gray, densely dotted] coordinates {
    (1.0000,0.2167)
    (1.0000,0.2500)
    (1.0007,0.2500)
    (1.0008,0.2667)
    (1.0013,0.2667)
    (1.0013,0.2833)
    (1.0020,0.2833)
    (1.0020,0.3000)
    (1.0024,0.3167)
    (1.0033,0.3167)
    (1.0034,0.3333)
    (1.0044,0.3333)
    (1.0053,0.3500)
    (1.0059,0.3667)
    (1.0068,0.3667)
    (1.0070,0.3833)
    (1.0075,0.3833)
    (1.0084,0.4000)
    (1.0085,0.4000)
    (1.0085,0.4167)
    (1.0140,0.4167)
    (1.0140,0.4333)
    (1.0141,0.4333)
    (1.0142,0.4500)
    (1.0196,0.4500)
    (1.0197,0.4667)
    (1.0237,0.4667)
    (1.0237,0.4833)
    (1.0372,0.4833)
    (1.0415,0.5000)
    (1.0542,0.5000)
    (1.0664,0.5167)
    (1.0697,0.5333)
    (1.0827,0.5333)
    (1.0835,0.5500)
    (1.0930,0.5500)
    (1.0994,0.5667)
    (1.1011,0.5667)
    (1.1012,0.5833)
    (1.1241,0.5833)
    (1.1270,0.6000)
    (1.1332,0.6167)
    (1.1356,0.6333)
    (1.1366,0.6333)
    (1.1436,0.6500)
    (1.1468,0.6500)
    (1.1540,0.6667)
    (1.1568,0.6667)
    (1.1688,0.6833)
    (1.1693,0.7000)
    (1.1709,0.7167)
    (1.1750,0.7167)
    (1.1775,0.7333)
    (1.1958,0.7333)
    (1.2016,0.7500)
    (1.2018,0.7667)
    (1.2131,0.7667)
    (1.2163,0.7833)
    (1.2218,0.8000)
    (1.2477,0.8000)
    (1.2477,0.8333)
    (1.25,1.0000)
  };
  \addlegendentry{T-D-NMDT}
  \addplot+[mark=none, thick, green!40!gray, solid] coordinates {
    (1.0000,0.2000)
    (1.0000,0.2167)
    (1.0013,0.2167)
    (1.0013,0.2333)
    (1.0020,0.2500)
    (1.0024,0.2500)
    (1.0031,0.2667)
    (1.0032,0.2667)
    (1.0033,0.2833)
    (1.0034,0.2833)
    (1.0034,0.3000)
    (1.0059,0.3000)
    (1.0059,0.3167)
    (1.0070,0.3167)
    (1.0070,0.3333)
    (1.0084,0.3333)
    (1.0085,0.3500)
    (1.0086,0.3667)
    (1.0140,0.3667)
    (1.0140,0.3833)
    (1.0154,0.3833)
    (1.0158,0.4000)
    (1.0197,0.4000)
    (1.0198,0.4167)
    (1.0221,0.4167)
    (1.0237,0.4333)
    (1.0270,0.4333)
    (1.0271,0.4500)
    (1.0372,0.4500)
    (1.0415,0.4667)
    (1.0664,0.4667)
    (1.0664,0.4833)
    (1.0698,0.4833)
    (1.0698,0.5000)
    (1.0835,0.5000)
    (1.0861,0.5167)
    (1.0930,0.5167)
    (1.0994,0.5333)
    (1.1011,0.5500)
    (1.1270,0.5500)
    (1.1270,0.5667)
    (1.1436,0.5667)
    (1.1436,0.5833)
    (1.1468,0.5833)
    (1.1540,0.6000)
    (1.1568,0.6000)
    (1.1688,0.6167)
    (1.1693,0.6333)
    (1.1775,0.6333)
    (1.1776,0.6500)
    (1.1884,0.6667)
    (1.1936,0.6833)
    (1.1958,0.7000)
    (1.2131,0.7000)
    (1.2163,0.7167)
    (1.2458,0.7167)
    (1.2477,0.7333)
    (1.2477,0.7500)
    (1.25,1.0000)
  };
  \addlegendentry{T-D-NMDT-NC}
  \draw node[right,draw,align=left] {$L=2$\\};
  \end{axis}
\end{tikzpicture}
            \end{center}
        \end{minipage}
    \end{center}
    \hfill
    \begin{center}
        \begin{minipage}{0.475\textwidth}
            \begin{center}
                 \begin{tikzpicture}
  \begin{axis}[const plot,
  cycle list={
  {blue,solid},
  {red!40!gray,dashed},
  {black,dotted},
  {brown,dashdotted},
  {green!40!gray!80!black,dashdotdotted},
  {magenta!80!black,densely dotted}},
    xmin=1, xmax=1.25,
    ymin=-0.003, ymax=1.003,
    ymajorgrids,
    ytick={0,0.2,0.4,0.6,0.8,1.0},
    xlabel={$\tau$},
    ylabel={$P(\tau)$},
,
    legend pos={south east},
    legend style={font=\tiny},
    width=\textwidth
    ]
  \addplot+[mark=none, ultra thick, black, dashed] coordinates {
    (1.0000,0.4500)
    (1.0000,0.8167)
    (1.0001,0.8167)
    (1.0001,0.8833)
    (1.0032,0.8833)
    (1.0033,0.9000)
    (1.0037,0.9000)
    (1.0037,0.9167)
    (1.0044,0.9167)
    (1.0048,0.9333)
    (1.0123,0.9333)
    (1.0186,0.9500)
    (1.0535,0.9500)
    (1.0542,0.9667)
    (1.0579,0.9833)
    (1.25,1.0000)
  };
  \addlegendentry{Gurobi}
  \addplot+[mark=none, thick, black, solid] coordinates {
    (1.0000,0.3333)
    (1.0000,0.6167)
    (1.0001,0.6333)
    (1.0001,0.6667)
    (1.0032,0.6667)
    (1.0033,0.6833)
    (1.0037,0.6833)
    (1.0044,0.7000)
    (1.0067,0.7000)
    (1.0068,0.7167)
    (1.0075,0.7333)
    (1.0078,0.7333)
    (1.0079,0.7500)
    (1.0115,0.7500)
    (1.0118,0.7667)
    (1.0186,0.7667)
    (1.0219,0.7833)
    (1.0351,0.7833)
    (1.0377,0.8000)
    (1.0579,0.8000)
    (1.0789,0.8167)
    (1.0864,0.8333)
    (1.2237,0.8333)
    (1.2248,0.8500)
    (1.25,1.0000)
  };
  \addlegendentry{Gurobi-NC}
  \addplot+[mark=none,ultra thick, red!40!gray, dashed] coordinates {
    (1.0000,0.2500)
    (1.0000,0.3000)
    (1.0002,0.3000)
    (1.0002,0.3167)
    (1.0005,0.3167)
    (1.0005,0.3500)
    (1.0007,0.3500)
    (1.0008,0.3667)
    (1.0010,0.3667)
    (1.0010,0.4000)
    (1.0011,0.4000)
    (1.0012,0.4167)
    (1.0016,0.4167)
    (1.0016,0.4333)
    (1.0022,0.4333)
    (1.0030,0.4500)
    (1.0031,0.4667)
    (1.0031,0.4833)
    (1.0032,0.5000)
    (1.0033,0.5000)
    (1.0036,0.5167)
    (1.0054,0.5167)
    (1.0060,0.5333)
    (1.0064,0.5333)
    (1.0065,0.5500)
    (1.0075,0.5500)
    (1.0078,0.5667)
    (1.0079,0.5667)
    (1.0083,0.5833)
    (1.0107,0.5833)
    (1.0113,0.6000)
    (1.0115,0.6167)
    (1.0304,0.6167)
    (1.0311,0.6333)
    (1.0324,0.6333)
    (1.0324,0.6500)
    (1.0377,0.6500)
    (1.0410,0.6667)
    (1.0926,0.6667)
    (1.1005,0.6833)
    (1.1154,0.6833)
    (1.1154,0.7000)
    (1.1159,0.7167)
    (1.1295,0.7167)
    (1.1370,0.7333)
    (1.1504,0.7333)
    (1.1666,0.7500)
    (1.1726,0.7500)
    (1.1726,0.7667)
    (1.25,1.0000)
  };
  \addlegendentry{HybS}
  \addplot+[mark=none, thick, red!40!gray, solid] coordinates {
    (1.0000,0.2000)
    (1.0000,0.2667)
    (1.0001,0.2667)
    (1.0001,0.2833)
    (1.0002,0.2833)
    (1.0002,0.3000)
    (1.0005,0.3000)
    (1.0005,0.3167)
    (1.0007,0.3333)
    (1.0010,0.3333)
    (1.0010,0.3667)
    (1.0011,0.3667)
    (1.0012,0.3833)
    (1.0016,0.3833)
    (1.0016,0.4000)
    (1.0020,0.4000)
    (1.0022,0.4167)
    (1.0030,0.4167)
    (1.0030,0.4333)
    (1.0032,0.4333)
    (1.0032,0.4500)
    (1.0033,0.4500)
    (1.0033,0.4667)
    (1.0036,0.4667)
    (1.0037,0.4833)
    (1.0054,0.4833)
    (1.0060,0.5000)
    (1.0078,0.5000)
    (1.0078,0.5167)
    (1.0083,0.5167)
    (1.0083,0.5333)
    (1.0095,0.5333)
    (1.0096,0.5500)
    (1.0097,0.5667)
    (1.0099,0.5833)
    (1.0118,0.5833)
    (1.0123,0.6000)
    (1.0219,0.6000)
    (1.0238,0.6167)
    (1.0241,0.6167)
    (1.0262,0.6333)
    (1.0311,0.6333)
    (1.0311,0.6500)
    (1.0324,0.6667)
    (1.0410,0.6667)
    (1.0410,0.6833)
    (1.1154,0.6833)
    (1.1154,0.7000)
    (1.1159,0.7167)
    (1.1726,0.7167)
    (1.1726,0.7333)
    (1.2414,0.7333)
    (1.25,1.0000)
  };
  \addlegendentry{HybS-NC}
  \addplot+[mark=none,ultra thick, green!40!gray, densely dashed] coordinates {
    (1.0000,0.1667)
    (1.0000,0.2500)
    (1.0001,0.2500)
    (1.0001,0.2833)
    (1.0002,0.2833)
    (1.0002,0.3167)
    (1.0003,0.3333)
    (1.0003,0.3500)
    (1.0004,0.3667)
    (1.0005,0.3833)
    (1.0007,0.3833)
    (1.0007,0.4000)
    (1.0008,0.4000)
    (1.0009,0.4167)
    (1.0010,0.4167)
    (1.0011,0.4333)
    (1.0011,0.4500)
    (1.0012,0.4500)
    (1.0014,0.4667)
    (1.0016,0.4833)
    (1.0019,0.5000)
    (1.0019,0.5167)
    (1.0048,0.5167)
    (1.0050,0.5333)
    (1.0051,0.5500)
    (1.0060,0.5500)
    (1.0064,0.5667)
    (1.0095,0.5667)
    (1.0095,0.5833)
    (1.0099,0.5833)
    (1.0107,0.6000)
    (1.0238,0.6000)
    (1.0241,0.6167)
    (1.0262,0.6167)
    (1.0275,0.6333)
    (1.0324,0.6333)
    (1.0351,0.6500)
    (1.0864,0.6500)
    (1.0926,0.6667)
    (1.1005,0.6667)
    (1.1036,0.6833)
    (1.1110,0.7000)
    (1.1154,0.7167)
    (1.1370,0.7167)
    (1.1504,0.7333)
    (1.1666,0.7333)
    (1.1725,0.7500)
    (1.25,1.0000)
  };
  \addlegendentry{T-D-NMDT}
  \addplot+[mark=none, thick, green!40!gray, solid] coordinates {
    (1.0000,0.1667)
    (1.0000,0.2500)
    (1.0001,0.2500)
    (1.0001,0.2833)
    (1.0002,0.3000)
    (1.0002,0.3333)
    (1.0003,0.3333)
    (1.0003,0.3667)
    (1.0005,0.3667)
    (1.0005,0.3833)
    (1.0007,0.3833)
    (1.0007,0.4000)
    (1.0009,0.4000)
    (1.0010,0.4167)
    (1.0011,0.4167)
    (1.0011,0.4333)
    (1.0014,0.4333)
    (1.0016,0.4500)
    (1.0016,0.4667)
    (1.0019,0.4833)
    (1.0020,0.5000)
    (1.0051,0.5000)
    (1.0054,0.5167)
    (1.0065,0.5167)
    (1.0067,0.5333)
    (1.0083,0.5333)
    (1.0084,0.5500)
    (1.0095,0.5667)
    (1.0099,0.5667)
    (1.0107,0.5833)
    (1.0275,0.5833)
    (1.0304,0.6000)
    (1.0410,0.6000)
    (1.0488,0.6167)
    (1.0491,0.6333)
    (1.0535,0.6500)
    (1.1110,0.6500)
    (1.1154,0.6667)
    (1.1159,0.6667)
    (1.1205,0.6833)
    (1.1295,0.7000)
    (1.1725,0.7000)
    (1.1726,0.7167)
    (1.2237,0.7333)
    (1.2248,0.7333)
    (1.2414,0.7500)
    (1.25,1.0000)
  };
  \addlegendentry{T-D-NMDT-NC}
  \draw node[right,draw,align=left] {$L=4$\\};
  \end{axis}
\end{tikzpicture}
            \end{center}
        \end{minipage}
        \hfill
        \begin{minipage}{0.475\textwidth}
            \begin{center}
                 \begin{tikzpicture}
  \begin{axis}[const plot,
  cycle list={
  {blue,solid},
  {red!40!gray,dashed},
  {black,dotted},
  {brown,dashdotted},
  {green!40!gray!80!black,dashdotdotted},
  {magenta!80!black,densely dotted}},
    xmin=1, xmax=1.25,
    ymin=-0.003, ymax=1.003,
    ymajorgrids,
    ytick={0,0.2,0.4,0.6,0.8,1.0},
    xlabel={$\tau$},
    ylabel={$P(\tau)$},
,
    legend pos={south east},
    legend style={font=\tiny},
    width=\textwidth
    ]
  \addplot+[mark=none, ultra thick, black, dashed] coordinates {
    (1.0000,0.4333)
    (1.0000,0.8167)
    (1.0001,0.8167)
    (1.0001,0.8833)
    (1.0007,0.8833)
    (1.0007,0.9000)
    (1.0034,0.9000)
    (1.0035,0.9167)
    (1.0050,0.9167)
    (1.0059,0.9333)
    (1.0095,0.9333)
    (1.0096,0.9500)
    (1.0190,0.9500)
    (1.0190,0.9667)
    (1.0499,0.9667)
    (1.0503,0.9833)
    (1.0630,0.9833)
    (1.0635,1.0000)
    (1.25,1.0000)
  };
  \addlegendentry{Gurobi}
  \addplot+[mark=none, thick, black, solid] coordinates {
    (1.0000,0.3167)
    (1.0000,0.6333)
    (1.0001,0.6500)
    (1.0001,0.6667)
    (1.0042,0.6667)
    (1.0044,0.6833)
    (1.0066,0.6833)
    (1.0066,0.7000)
    (1.0075,0.7167)
    (1.0075,0.7333)
    (1.0087,0.7333)
    (1.0088,0.7500)
    (1.0095,0.7500)
    (1.0096,0.7667)
    (1.0113,0.7667)
    (1.0118,0.7833)
    (1.0207,0.7833)
    (1.0222,0.8000)
    (1.0392,0.8000)
    (1.0399,0.8167)
    (1.0635,0.8167)
    (1.0787,0.8333)
    (1.0789,0.8500)
    (1.2173,0.8500)
    (1.2248,0.8667)
    (1.25,1.0000)
  };
  \addlegendentry{Gurobi-NC}
  \addplot+[mark=none, ultra thick, red!40!gray, dashed] coordinates {
    (1.0000,0.2333)
    (1.0000,0.3000)
    (1.0001,0.3000)
    (1.0001,0.4000)
    (1.0002,0.4000)
    (1.0002,0.4500)
    (1.0004,0.4500)
    (1.0004,0.4667)
    (1.0006,0.4667)
    (1.0007,0.4833)
    (1.0008,0.5000)
    (1.0013,0.5000)
    (1.0018,0.5167)
    (1.0026,0.5167)
    (1.0034,0.5333)
    (1.0059,0.5333)
    (1.0062,0.5500)
    (1.0063,0.5500)
    (1.0065,0.5667)
    (1.0082,0.5667)
    (1.0084,0.5833)
    (1.0088,0.5833)
    (1.0094,0.6000)
    (1.0141,0.6000)
    (1.0144,0.6167)
    (1.0149,0.6167)
    (1.0157,0.6333)
    (1.0190,0.6333)
    (1.0207,0.6500)
    (1.0222,0.6500)
    (1.0244,0.6667)
    (1.0964,0.6667)
    (1.1033,0.6833)
    (1.1343,0.6833)
    (1.1409,0.7000)
    (1.1540,0.7000)
    (1.1729,0.7167)
    (1.1906,0.7167)
    (1.2173,0.7333)
    (1.2248,0.7333)
    (1.2300,0.7500)
    (1.25,1.0000)
  };
  \addlegendentry{HybS}
  \addplot+[mark=none, thick, red!40!gray, solid] coordinates {
    (1.0000,0.2167)
    (1.0000,0.3500)
    (1.0001,0.3500)
    (1.0001,0.4500)
    (1.0002,0.4500)
    (1.0002,0.5333)
    (1.0003,0.5333)
    (1.0003,0.5500)
    (1.0007,0.5500)
    (1.0007,0.5667)
    (1.0045,0.5667)
    (1.0050,0.5833)
    (1.0065,0.5833)
    (1.0066,0.6000)
    (1.0096,0.6000)
    (1.0101,0.6167)
    (1.0112,0.6333)
    (1.0157,0.6333)
    (1.0165,0.6500)
    (1.0522,0.6500)
    (1.0530,0.6667)
    (1.0964,0.6667)
    (1.1033,0.6833)
    (1.1045,0.7000)
    (1.1793,0.7000)
    (1.1827,0.7167)
    (1.25,1.0000)
  };
  \addlegendentry{HybS-NC}
  \addplot+[mark=none, ultra thick, green!40!gray, dashed] coordinates {
    (1.0000,0.0500)
    (1.0000,0.3000)
    (1.0001,0.3000)
    (1.0001,0.4167)
    (1.0002,0.4333)
    (1.0003,0.4333)
    (1.0005,0.4667)
    (1.0006,0.4833)
    (1.0008,0.4833)
    (1.0013,0.5000)
    (1.0018,0.5000)
    (1.0026,0.5167)
    (1.0035,0.5167)
    (1.0037,0.5333)
    (1.0044,0.5333)
    (1.0045,0.5500)
    (1.0075,0.5500)
    (1.0080,0.5667)
    (1.0082,0.5833)
    (1.0084,0.5833)
    (1.0087,0.6000)
    (1.0094,0.6000)
    (1.0095,0.6167)
    (1.0112,0.6167)
    (1.0113,0.6333)
    (1.0244,0.6333)
    (1.0279,0.6500)
    (1.0298,0.6667)
    (1.0303,0.6833)
    (1.0964,0.6833)
    (1.1033,0.7000)
    (1.1045,0.7000)
    (1.1143,0.7167)
    (1.1154,0.7167)
    (1.1270,0.7333)
    (1.1409,0.7333)
    (1.1540,0.7500)
    (1.1827,0.7500)
    (1.1906,0.7667)
    (1.25,1.0000)
  };
  \addlegendentry{T-D-NMDT}
  \addplot+[mark=none, thick, green!40!gray, solid] coordinates {
    (1.0000,0.1000)
    (1.0000,0.3500)
    (1.0001,0.3500)
    (1.0001,0.4167)
    (1.0002,0.4167)
    (1.0002,0.4333)
    (1.0003,0.4500)
    (1.0037,0.4500)
    (1.0042,0.4667)
    (1.0062,0.4667)
    (1.0063,0.4833)
    (1.0084,0.4833)
    (1.0087,0.5000)
    (1.0118,0.5000)
    (1.0141,0.5167)
    (1.0144,0.5167)
    (1.0149,0.5333)
    (1.0165,0.5333)
    (1.0190,0.5500)
    (1.0303,0.5500)
    (1.0351,0.5667)
    (1.0392,0.5833)
    (1.0399,0.5833)
    (1.0499,0.6000)
    (1.0503,0.6000)
    (1.0505,0.6167)
    (1.0522,0.6333)
    (1.0530,0.6333)
    (1.0630,0.6500)
    (1.0789,0.6500)
    (1.0964,0.6667)
    (1.1033,0.6833)
    (1.1143,0.6833)
    (1.1154,0.7000)
    (1.1270,0.7000)
    (1.1343,0.7167)
    (1.1729,0.7167)
    (1.1793,0.7333)
    (1.2300,0.7333)
    (1.2448,0.7500)
    (1.25,1.0000)
  };
  \addlegendentry{T-D-NMDT-NC}
  \draw node[right,draw,align=left] {$L=6$\\};
  \end{axis}
\end{tikzpicture}
            \end{center}
        \end{minipage}
    \end{center}
    \hfill
    \caption{Performance profiles on dual bounds of best MIP relaxation compared to Gurobi as MIQCQP solver, with and without cuts, on all 60 instances. }\label{gurobi_all}
\end{figure}

\begin{table}
    \caption{Shifted geometric mean for run times on all instances for best MIP relaxation compared to Gurobi as MIQCQP solver with cuts and without cuts (-NC).}
    \label{table_sgm_gurobi_all}
    \centering
    \begin{tabular} {l r r r r r r}
        \toprule
        {\qquad} & {\quad \HybS} & {\quad \HybS-NC} & {\quad T-\DNMDT} & {\quad T-\DNMDT-NC} & {\quad Gurobi} & {\quad Gurobi-NC} \\  
        \midrule
         L1 & 188.23 & 225.0 & 163.52 & 244.73 & \textbf{77.32} & 388.74 \\
         L2 & 342.32 & 279.0 & 266.37 & 340.8 & \textbf{54.0} & 307.03 \\
         L4 & 1008.09 & 964.63 & 950.73 & 1012.11 & \textbf{25.16} & 193.75 \\
         L6 & 2548.31 & 2315.29 & 1665.28 & 1618.91 & \textbf{20.75} & 174.54 \\
        \bottomrule
    \end{tabular}
\end{table}

\begin{figure}
    \begin{center}
        \begin{minipage}{0.475\textwidth}
            \begin{center}
                 \begin{tikzpicture}
  \begin{axis}[const plot,
  cycle list={
  {blue,solid},
  {red!40!gray,dashed},
  {black,dotted},
  {brown,dashdotted},
  {green!40!gray!80!black,dashdotdotted},
  {magenta!80!black,densely dotted}},
    xmin=1, xmax=1.25,
    ymin=-0.003, ymax=1.003,
    ymajorgrids,
    ytick={0,0.2,0.4,0.6,0.8,1.0},
    xlabel={$\tau$},
    ylabel={$P(\tau)$},
,
    legend pos={south east},
    legend style={font=\tiny, fill opacity=0.7, draw=none},
    width=\textwidth
    ]
  \addplot+[mark=none, ultra thick, black, dashed] coordinates {
    (1.0000,0.5000)
    (1.0000,0.8000)
    (1.0001,0.8333)
    (1.0001,0.9000)
    (1.0049,0.9000)
    (1.0049,0.9333)
    (1.0415,0.9333)
    (1.0542,0.9667)
    (1.25,1.0000)
  };
  \addlegendentry{Gurobi}
  \addplot+[mark=none, thick, black, solid] coordinates {
    (1.0000,0.5000)
    (1.0000,0.6667)
    (1.0001,0.6667)
    (1.0001,0.7000)
    (1.0032,0.7333)
    (1.0032,0.7667)
    (1.0049,0.7667)
    (1.0068,0.8000)
    (1.0075,0.8333)
    (1.0094,0.8667)
    (1.0102,0.8667)
    (1.0118,0.9000)
    (1.0244,0.9000)
    (1.0270,0.9333)
    (1.0321,0.9333)
    (1.0339,0.9667)
    (1.25,1.0000)
  };
  \addlegendentry{Gurobi-NC}
  \addplot+[mark=none, ultra thick, red!40!gray, dashed] coordinates {
    (1.0000,0.0667)
    (1.0042,0.0667)
    (1.0043,0.1000)
    (1.0049,0.1333)
    (1.0127,0.1333)
    (1.0130,0.1667)
    (1.0141,0.1667)
    (1.0141,0.2000)
    (1.0146,0.2333)
    (1.0221,0.2333)
    (1.0222,0.2667)
    (1.0232,0.3000)
    (1.0301,0.3000)
    (1.0303,0.3333)
    (1.0304,0.3333)
    (1.0314,0.3667)
    (1.0339,0.3667)
    (1.0340,0.4000)
    (1.0394,0.4000)
    (1.0415,0.4333)
    (1.0542,0.4333)
    (1.0631,0.4667)
    (1.0771,0.5000)
    (1.1366,0.5000)
    (1.1377,0.5333)
    (1.1471,0.5667)
    (1.1508,0.5667)
    (1.1508,0.6000)
    (1.1693,0.6000)
    (1.1700,0.6333)
    (1.1790,0.6333)
    (1.1790,0.6667)
    (1.1885,0.6667)
    (1.1885,0.7000)
    (1.1979,0.7333)
    (1.2058,0.7333)
    (1.2058,0.7667)
    (1.2460,0.8000)
    (1.2494,0.8333)
    (1.25,1.0000)
  };
  \addlegendentry{HybS}
  \addplot+[mark=none, thick, red!40!gray, solid] coordinates {
    (1.0000,0.0667)
    (1.0039,0.0667)
    (1.0042,0.1000)
    (1.0043,0.1000)
    (1.0049,0.1333)
    (1.0130,0.1333)
    (1.0131,0.1667)
    (1.0141,0.1667)
    (1.0141,0.2000)
    (1.0159,0.2000)
    (1.0172,0.2333)
    (1.0175,0.2333)
    (1.0221,0.2667)
    (1.0232,0.2667)
    (1.0235,0.3000)
    (1.0303,0.3000)
    (1.0304,0.3333)
    (1.0314,0.3667)
    (1.0339,0.3667)
    (1.0340,0.4000)
    (1.0394,0.4000)
    (1.0415,0.4333)
    (1.0631,0.4333)
    (1.0631,0.4667)
    (1.0771,0.5000)
    (1.1377,0.5000)
    (1.1377,0.5333)
    (1.1471,0.5333)
    (1.1471,0.5667)
    (1.1508,0.6000)
    (1.1693,0.6000)
    (1.1700,0.6333)
    (1.1789,0.6333)
    (1.1790,0.6667)
    (1.1885,0.7000)
    (1.1979,0.7333)
    (1.2058,0.7667)
    (1.2460,0.8000)
    (1.2494,0.8333)
    (1.25,1.0000)
  };
  \addlegendentry{HybS-NC}
  \addplot+[mark=none, ultra thick, green!40!gray, densely dotted] coordinates {
    (1.0000,0.0667)
    (1.0032,0.0667)
    (1.0039,0.1000)
    (1.0043,0.1000)
    (1.0049,0.1333)
    (1.0094,0.1333)
    (1.0102,0.1667)
    (1.0118,0.1667)
    (1.0124,0.2000)
    (1.0141,0.2000)
    (1.0141,0.2333)
    (1.0146,0.2333)
    (1.0153,0.2667)
    (1.0159,0.3000)
    (1.0232,0.3000)
    (1.0232,0.3333)
    (1.0301,0.3333)
    (1.0301,0.3667)
    (1.0314,0.3667)
    (1.0321,0.4000)
    (1.0340,0.4000)
    (1.0346,0.4333)
    (1.0394,0.4667)
    (1.0415,0.5000)
    (1.0771,0.5000)
    (1.1120,0.5333)
    (1.1345,0.5667)
    (1.1366,0.5667)
    (1.1366,0.6000)
    (1.1508,0.6000)
    (1.1517,0.6333)
    (1.1693,0.6667)
    (1.1773,0.6667)
    (1.1773,0.7000)
    (1.1789,0.7333)
    (1.1885,0.7333)
    (1.1885,0.7667)
    (1.2058,0.7667)
    (1.2058,0.8000)
    (1.2460,0.8000)
    (1.2494,0.8333)
    (1.25,1.0000)
  };
  \addlegendentry{T-D-NMDT}
  \addplot+[mark=none, thick, green!40!gray, solid] coordinates {
    (1.0000,0.0333)
    (1.0039,0.0333)
    (1.0039,0.0667)
    (1.0043,0.0667)
    (1.0049,0.1000)
    (1.0094,0.1000)
    (1.0102,0.1333)
    (1.0124,0.1333)
    (1.0125,0.1667)
    (1.0127,0.2000)
    (1.0131,0.2000)
    (1.0141,0.2333)
    (1.0146,0.2333)
    (1.0153,0.2667)
    (1.0172,0.2667)
    (1.0175,0.3000)
    (1.0235,0.3000)
    (1.0244,0.3333)
    (1.0270,0.3333)
    (1.0301,0.3667)
    (1.0321,0.3667)
    (1.0321,0.4000)
    (1.0340,0.4000)
    (1.0346,0.4333)
    (1.0394,0.4667)
    (1.0415,0.5000)
    (1.0771,0.5000)
    (1.1120,0.5333)
    (1.1345,0.5333)
    (1.1346,0.5667)
    (1.1366,0.6000)
    (1.1517,0.6000)
    (1.1517,0.6333)
    (1.1693,0.6667)
    (1.1700,0.6667)
    (1.1773,0.7000)
    (1.1789,0.7333)
    (1.1885,0.7333)
    (1.1885,0.7667)
    (1.2058,0.7667)
    (1.2058,0.8000)
    (1.2494,0.8000)
    (1.2494,0.8333)
    (1.25,1.0000)
  };
  \addlegendentry{T-D-NMDT-NC}
  \draw node[right,draw,align=left] {$L=1$\\};
  \end{axis}
\end{tikzpicture}
            \end{center}
        \end{minipage}
        \hfill
        \begin{minipage}{0.475\textwidth}
            \begin{center}
                 \begin{tikzpicture}
  \begin{axis}[const plot,
  cycle list={
  {blue,solid},
  {red!40!gray,dashed},
  {black,dotted},
  {brown,dashdotted},
  {green!40!gray!80!black,dashdotdotted},
  {magenta!80!black,densely dotted}},
    xmin=1, xmax=1.25,
    ymin=-0.003, ymax=1.003,
    ymajorgrids,
    ytick={0,0.2,0.4,0.6,0.8,1.0},
    xlabel={$\tau$},
    ylabel={$P(\tau)$},
,
    legend pos={south east},
    legend style={font=\tiny, fill opacity=0.7, draw=none},
    width=\textwidth
    ]
  \addplot+[mark=none, ultra thick, black, dashed] coordinates {
    (1.0000,0.5333)
    (1.0000,0.8333)
    (1.0001,0.8667)
    (1.0001,0.9333)
    (1.0415,0.9333)
    (1.0542,0.9667)
    (1.25,1.0000)
  };
  \addlegendentry{Gurobi}
  \addplot+[mark=none, thick, black, solid] coordinates {
    (1.0000,0.4667)
    (1.0000,0.6333)
    (1.0001,0.6333)
    (1.0001,0.7000)
    (1.0031,0.7000)
    (1.0032,0.7333)
    (1.0032,0.7667)
    (1.0041,0.7667)
    (1.0044,0.8000)
    (1.0053,0.8000)
    (1.0068,0.8333)
    (1.0070,0.8333)
    (1.0075,0.8667)
    (1.0100,0.8667)
    (1.0118,0.9000)
    (1.0237,0.9000)
    (1.0270,0.9333)
    (1.0308,0.9333)
    (1.0339,0.9667)
    (1.25,1.0000)
  };
  \addlegendentry{Gurobi-NC}
  \addplot+[mark=none, ultra thick, red!40!gray, dashed] coordinates {
    (1.0000,0.0333)
    (1.0000,0.0667)
    (1.0001,0.0667)
    (1.0007,0.1000)
    (1.0034,0.1000)
    (1.0038,0.1333)
    (1.0040,0.1667)
    (1.0086,0.1667)
    (1.0098,0.2000)
    (1.0118,0.2000)
    (1.0140,0.2333)
    (1.0141,0.2333)
    (1.0141,0.2667)
    (1.0142,0.2667)
    (1.0149,0.3000)
    (1.0158,0.3000)
    (1.0185,0.3333)
    (1.0221,0.3333)
    (1.0221,0.3667)
    (1.0271,0.3667)
    (1.0287,0.4000)
    (1.0308,0.4333)
    (1.0339,0.4333)
    (1.0372,0.4667)
    (1.0415,0.5000)
    (1.0697,0.5000)
    (1.0698,0.5333)
    (1.0812,0.5333)
    (1.0827,0.5667)
    (1.0861,0.5667)
    (1.0930,0.6000)
    (1.1075,0.6000)
    (1.1076,0.6333)
    (1.1158,0.6333)
    (1.1159,0.6667)
    (1.1366,0.6667)
    (1.1366,0.7000)
    (1.1468,0.7000)
    (1.1468,0.7333)
    (1.1693,0.7333)
    (1.1693,0.7667)
    (1.1750,0.8000)
    (1.1775,0.8000)
    (1.1775,0.8333)
    (1.1776,0.8333)
    (1.2131,0.8667)
    (1.2163,0.9000)
    (1.2494,0.9333)
    (1.25,1.0000)
  };
  \addlegendentry{HybS}
  \addplot+[mark=none, thick, red!40!gray, solid] coordinates {
    (1.0000,0.0667)
    (1.0008,0.0667)
    (1.0013,0.1000)
    (1.0034,0.1000)
    (1.0038,0.1333)
    (1.0040,0.1333)
    (1.0041,0.1667)
    (1.0098,0.1667)
    (1.0100,0.2000)
    (1.0140,0.2000)
    (1.0141,0.2333)
    (1.0149,0.2333)
    (1.0153,0.2667)
    (1.0154,0.3000)
    (1.0158,0.3000)
    (1.0185,0.3333)
    (1.0198,0.3333)
    (1.0221,0.3667)
    (1.0271,0.3667)
    (1.0287,0.4000)
    (1.0308,0.4000)
    (1.0308,0.4333)
    (1.0372,0.4333)
    (1.0372,0.4667)
    (1.0415,0.5000)
    (1.0698,0.5000)
    (1.0812,0.5333)
    (1.0827,0.5333)
    (1.0827,0.5667)
    (1.0930,0.5667)
    (1.0930,0.6000)
    (1.1012,0.6000)
    (1.1075,0.6333)
    (1.1076,0.6333)
    (1.1158,0.6667)
    (1.1270,0.6667)
    (1.1366,0.7000)
    (1.1436,0.7000)
    (1.1468,0.7333)
    (1.1688,0.7333)
    (1.1693,0.7667)
    (1.1750,0.8000)
    (1.1775,0.8000)
    (1.1775,0.8333)
    (1.2131,0.8333)
    (1.2131,0.8667)
    (1.2163,0.9000)
    (1.2494,0.9333)
    (1.25,1.0000)
  };
  \addlegendentry{HybS-NC}
  \addplot+[mark=none, ultra thick, green!40!gray, densely dashed] coordinates {
    (1.0000,0.0667)
    (1.0007,0.0667)
    (1.0008,0.1000)
    (1.0013,0.1000)
    (1.0013,0.1333)
    (1.0020,0.1333)
    (1.0020,0.1667)
    (1.0024,0.2000)
    (1.0033,0.2000)
    (1.0034,0.2333)
    (1.0044,0.2333)
    (1.0053,0.2667)
    (1.0068,0.2667)
    (1.0070,0.3000)
    (1.0075,0.3000)
    (1.0084,0.3333)
    (1.0085,0.3333)
    (1.0085,0.3667)
    (1.0140,0.3667)
    (1.0140,0.4000)
    (1.0141,0.4000)
    (1.0142,0.4333)
    (1.0185,0.4333)
    (1.0197,0.4667)
    (1.0237,0.4667)
    (1.0237,0.5000)
    (1.0372,0.5000)
    (1.0415,0.5333)
    (1.0542,0.5333)
    (1.0664,0.5667)
    (1.0697,0.6000)
    (1.0827,0.6000)
    (1.0835,0.6333)
    (1.0930,0.6333)
    (1.0994,0.6667)
    (1.1011,0.6667)
    (1.1012,0.7000)
    (1.1159,0.7000)
    (1.1270,0.7333)
    (1.1366,0.7333)
    (1.1436,0.7667)
    (1.1468,0.7667)
    (1.1540,0.8000)
    (1.1688,0.8333)
    (1.1693,0.8667)
    (1.1750,0.8667)
    (1.1775,0.9000)
    (1.2131,0.9000)
    (1.2163,0.9333)
    (1.25,1.0000)
  };
  \addlegendentry{T-D-NMDT}
  \addplot+[mark=none, thick, green!40!gray, solid] coordinates {
    (1.0000,0.0667)
    (1.0013,0.0667)
    (1.0013,0.1000)
    (1.0020,0.1333)
    (1.0024,0.1333)
    (1.0031,0.1667)
    (1.0032,0.1667)
    (1.0033,0.2000)
    (1.0034,0.2000)
    (1.0034,0.2333)
    (1.0070,0.2333)
    (1.0070,0.2667)
    (1.0084,0.2667)
    (1.0086,0.3333)
    (1.0140,0.3333)
    (1.0140,0.3667)
    (1.0154,0.3667)
    (1.0158,0.4000)
    (1.0197,0.4000)
    (1.0198,0.4333)
    (1.0221,0.4333)
    (1.0237,0.4667)
    (1.0270,0.4667)
    (1.0271,0.5000)
    (1.0372,0.5000)
    (1.0415,0.5333)
    (1.0664,0.5333)
    (1.0664,0.5667)
    (1.0698,0.5667)
    (1.0698,0.6000)
    (1.0835,0.6000)
    (1.0861,0.6333)
    (1.0930,0.6333)
    (1.0994,0.6667)
    (1.1011,0.7000)
    (1.1270,0.7000)
    (1.1270,0.7333)
    (1.1436,0.7333)
    (1.1436,0.7667)
    (1.1468,0.7667)
    (1.1540,0.8000)
    (1.1688,0.8333)
    (1.1693,0.8667)
    (1.1775,0.8667)
    (1.1776,0.9000)
    (1.2131,0.9000)
    (1.2163,0.9333)
    (1.25,1.0000)
  };
  \addlegendentry{T-D-NMDT-NC}
  \draw node[right,draw,align=left] {$L=2$\\};
  \end{axis}
\end{tikzpicture}
            \end{center}
        \end{minipage}
    \end{center}
    \hfill
    \begin{center}
        \begin{minipage}{0.475\textwidth}
            \begin{center}
                 \begin{tikzpicture}
  \begin{axis}[const plot,
  cycle list={
  {blue,solid},
  {red!40!gray,dashed},
  {black,dotted},
  {brown,dashdotted},
  {green!40!gray!80!black,dashdotdotted},
  {magenta!80!black,densely dotted}},
    xmin=1, xmax=1.25,
    ymin=-0.003, ymax=1.003,
    ymajorgrids,
    ytick={0,0.2,0.4,0.6,0.8,1.0},
    xlabel={$\tau$},
    ylabel={$P(\tau)$},
,
    legend pos={south east},
    legend style={font=\tiny},
    width=\textwidth
    ]
  \addplot+[mark=none, ultra thick, black, dashed] coordinates {
    (1.0000,0.3667)
    (1.0000,0.6667)
    (1.0001,0.7000)
    (1.0001,0.7667)
    (1.0032,0.7667)
    (1.0033,0.8000)
    (1.0037,0.8000)
    (1.0037,0.8333)
    (1.0044,0.8333)
    (1.0048,0.8667)
    (1.0123,0.8667)
    (1.0186,0.9000)
    (1.0535,0.9000)
    (1.0542,0.9333)
    (1.0579,0.9667)
    (1.1726,0.9667)
    (1.25,1.0000)
  };
  \addlegendentry{Gurobi}
  \addplot+[mark=none, thick, black, solid] coordinates {
    (1.0000,0.4333)
    (1.0000,0.6000)
    (1.0001,0.6000)
    (1.0001,0.6667)
    (1.0032,0.6667)
    (1.0033,0.7000)
    (1.0037,0.7000)
    (1.0044,0.7333)
    (1.0067,0.7333)
    (1.0068,0.7667)
    (1.0075,0.8000)
    (1.0078,0.8000)
    (1.0079,0.8333)
    (1.0115,0.8333)
    (1.0118,0.8667)
    (1.0186,0.8667)
    (1.0219,0.9000)
    (1.0351,0.9000)
    (1.0377,0.9333)
    (1.0579,0.9333)
    (1.0864,0.9667)
    (1.25,1.0000)
  };
  \addlegendentry{Gurobi-NC}
  \addplot+[mark=none, ultra thick, red!40!gray, dashed] coordinates {
    (1.0000,0.2000)
    (1.0000,0.2333)
    (1.0002,0.2333)
    (1.0002,0.2667)
    (1.0005,0.2667)
    (1.0005,0.3333)
    (1.0007,0.3333)
    (1.0008,0.3667)
    (1.0010,0.3667)
    (1.0010,0.4000)
    (1.0011,0.4000)
    (1.0012,0.4333)
    (1.0016,0.4333)
    (1.0016,0.4667)
    (1.0022,0.4667)
    (1.0030,0.5000)
    (1.0031,0.5333)
    (1.0031,0.5667)
    (1.0032,0.6000)
    (1.0033,0.6000)
    (1.0036,0.6333)
    (1.0054,0.6333)
    (1.0060,0.6667)
    (1.0064,0.6667)
    (1.0065,0.7000)
    (1.0075,0.7000)
    (1.0078,0.7333)
    (1.0079,0.7333)
    (1.0083,0.7667)
    (1.0099,0.7667)
    (1.0113,0.8000)
    (1.0115,0.8333)
    (1.0377,0.8333)
    (1.0410,0.8667)
    (1.0864,0.8667)
    (1.1005,0.9000)
    (1.1154,0.9000)
    (1.1154,0.9333)
    (1.1159,0.9667)
    (1.1726,0.9667)
    (1.1726,1.0000)
    (1.25,1.0000)
  };
  \addlegendentry{HybS}
  \addplot+[mark=none, thick, red!40!gray, solid] coordinates {
    (1.0000,0.1000)
    (1.0000,0.1667)
    (1.0001,0.1667)
    (1.0001,0.2000)
    (1.0002,0.2000)
    (1.0002,0.2333)
    (1.0005,0.2333)
    (1.0005,0.2667)
    (1.0007,0.3000)
    (1.0010,0.3000)
    (1.0010,0.3333)
    (1.0011,0.3333)
    (1.0012,0.3667)
    (1.0016,0.3667)
    (1.0016,0.4000)
    (1.0020,0.4000)
    (1.0022,0.4333)
    (1.0030,0.4333)
    (1.0030,0.4667)
    (1.0032,0.4667)
    (1.0032,0.5000)
    (1.0033,0.5000)
    (1.0033,0.5333)
    (1.0036,0.5333)
    (1.0037,0.5667)
    (1.0054,0.5667)
    (1.0060,0.6000)
    (1.0078,0.6000)
    (1.0078,0.6333)
    (1.0083,0.6333)
    (1.0083,0.6667)
    (1.0084,0.6667)
    (1.0096,0.7000)
    (1.0097,0.7333)
    (1.0099,0.7667)
    (1.0118,0.7667)
    (1.0123,0.8000)
    (1.0219,0.8000)
    (1.0238,0.8333)
    (1.0241,0.8333)
    (1.0262,0.8667)
    (1.0410,0.8667)
    (1.0410,0.9000)
    (1.1154,0.9000)
    (1.1154,0.9333)
    (1.1159,0.9667)
    (1.1726,0.9667)
    (1.1726,1.0000)
    (1.25,1.0000)
  };
  \addlegendentry{HybS-NC}
  \addplot+[mark=none, ultra thick, green!40!gray, densely dashed] coordinates {
    (1.0000,0.0333)
    (1.0000,0.1333)
    (1.0001,0.1333)
    (1.0001,0.2000)
    (1.0002,0.2000)
    (1.0002,0.2333)
    (1.0003,0.2667)
    (1.0003,0.3000)
    (1.0004,0.3333)
    (1.0005,0.3667)
    (1.0007,0.3667)
    (1.0007,0.4000)
    (1.0008,0.4000)
    (1.0009,0.4333)
    (1.0010,0.4333)
    (1.0011,0.4667)
    (1.0011,0.5000)
    (1.0012,0.5000)
    (1.0014,0.5333)
    (1.0016,0.5667)
    (1.0019,0.6000)
    (1.0019,0.6333)
    (1.0048,0.6333)
    (1.0050,0.6667)
    (1.0051,0.7000)
    (1.0060,0.7000)
    (1.0064,0.7333)
    (1.0238,0.7333)
    (1.0241,0.7667)
    (1.0262,0.7667)
    (1.0275,0.8000)
    (1.0304,0.8000)
    (1.0351,0.8333)
    (1.1005,0.8333)
    (1.1036,0.8667)
    (1.1110,0.9000)
    (1.1154,0.9333)
    (1.1295,0.9333)
    (1.1725,0.9667)
    (1.25,1.0000)
  };
  \addlegendentry{T-D-NMDT}
  \addplot+[mark=none, thick, green!40!gray, solid] coordinates {
    (1.0000,0.0333)
    (1.0000,0.1333)
    (1.0001,0.1333)
    (1.0001,0.2000)
    (1.0002,0.2333)
    (1.0002,0.2667)
    (1.0003,0.2667)
    (1.0003,0.3333)
    (1.0005,0.3333)
    (1.0005,0.3667)
    (1.0007,0.3667)
    (1.0007,0.4000)
    (1.0009,0.4000)
    (1.0010,0.4333)
    (1.0011,0.4333)
    (1.0011,0.4667)
    (1.0014,0.4667)
    (1.0016,0.5000)
    (1.0016,0.5333)
    (1.0019,0.5667)
    (1.0020,0.6000)
    (1.0051,0.6000)
    (1.0054,0.6333)
    (1.0065,0.6333)
    (1.0067,0.6667)
    (1.0083,0.6667)
    (1.0084,0.7000)
    (1.0275,0.7000)
    (1.0304,0.7333)
    (1.0410,0.7333)
    (1.0488,0.7667)
    (1.0491,0.8000)
    (1.0535,0.8333)
    (1.1110,0.8333)
    (1.1154,0.8667)
    (1.1159,0.8667)
    (1.1205,0.9000)
    (1.1295,0.9333)
    (1.1725,0.9333)
    (1.1726,0.9667)
    (1.25, 1.0000)
  };
  \addlegendentry{T-D-NMDT-NC}
  \draw node[right,draw,align=left] {$L=4$\\};
  \end{axis}
\end{tikzpicture}
            \end{center}
        \end{minipage}
        \hfill
        \begin{minipage}{0.475\textwidth}
            \begin{center}
                 \begin{tikzpicture}
  \begin{axis}[const plot,
  cycle list={
  {blue,solid},
  {red!40!gray,dashed},
  {black,dotted},
  {brown,dashdotted},
  {green!40!gray!80!black,dashdotdotted},
  {magenta!80!black,densely dotted}},
    xmin=1, xmax=1.25,
    ymin=-0.003, ymax=1.003,
    ymajorgrids,
    ytick={0,0.2,0.4,0.6,0.8,1.0},
    xlabel={$\tau$},
    ylabel={$P(\tau)$},
,
    legend pos={south east},
    legend style={font=\tiny},
    width=\textwidth
    ]
  \addplot+[mark=none, ultra thick, black, dashed] coordinates {
    (1.0000,0.3333)
    (1.0000,0.6667)
    (1.0001,0.7000)
    (1.0001,0.7667)
    (1.0004,0.7667)
    (1.0007,0.8000)
    (1.0034,0.8000)
    (1.0035,0.8333)
    (1.0050,0.8333)
    (1.0059,0.8667)
    (1.0095,0.8667)
    (1.0096,0.9000)
    (1.0190,0.9000)
    (1.0190,0.9333)
    (1.0499,0.9333)
    (1.0503,0.9667)
    (1.0630,0.9667)
    (1.0635,1.0000)
    (1.25,1.0000)
  };
  \addlegendentry{Gurobi}
  \addplot+[mark=none, thick, black, solid] coordinates {
    (1.0000,0.4000)
    (1.0000,0.6333)
    (1.0001,0.6333)
    (1.0001,0.6667)
    (1.0042,0.6667)
    (1.0044,0.7000)
    (1.0066,0.7000)
    (1.0066,0.7333)
    (1.0075,0.7667)
    (1.0075,0.8000)
    (1.0087,0.8000)
    (1.0088,0.8333)
    (1.0095,0.8333)
    (1.0096,0.8667)
    (1.0113,0.8667)
    (1.0118,0.9000)
    (1.0207,0.9000)
    (1.0222,0.9333)
    (1.0351,0.9333)
    (1.0399,0.9667)
    (1.0635,0.9667)
    (1.0787,1.0000)
    (1.25,1.0000)
  };
  \addlegendentry{Gurobi-NC}
  \addplot+[mark=none, ultra thick, red!40!gray, dashed] coordinates {
    (1.0000,0.1333)
    (1.0000,0.2333)
    (1.0001,0.2333)
    (1.0001,0.4000)
    (1.0002,0.4000)
    (1.0002,0.5000)
    (1.0004,0.5000)
    (1.0004,0.5333)
    (1.0007,0.5333)
    (1.0008,0.5667)
    (1.0013,0.5667)
    (1.0018,0.6000)
    (1.0026,0.6000)
    (1.0034,0.6333)
    (1.0059,0.6333)
    (1.0062,0.6667)
    (1.0063,0.6667)
    (1.0065,0.7000)
    (1.0082,0.7000)
    (1.0084,0.7333)
    (1.0088,0.7333)
    (1.0094,0.7667)
    (1.0141,0.7667)
    (1.0144,0.8000)
    (1.0157,0.8333)
    (1.0190,0.8333)
    (1.0207,0.8667)
    (1.0222,0.8667)
    (1.0244,0.9000)
    (1.0964,0.9000)
    (1.1033,0.9333)
    (1.1343,0.9333)
    (1.1409,0.9667)
    (1.2173,1.0000)
    (1.25,1.0000)
  };
  \addlegendentry{HybS}
  \addplot+[mark=none, thick, red!40!gray, solid] coordinates {
    (1.0000,0.1667)
    (1.0000,0.3000)
    (1.0001,0.3000)
    (1.0001,0.5000)
    (1.0002,0.5000)
    (1.0002,0.6667)
    (1.0003,0.6667)
    (1.0003,0.7000)
    (1.0045,0.7000)
    (1.0050,0.7333)
    (1.0065,0.7333)
    (1.0066,0.7667)
    (1.0096,0.7667)
    (1.0101,0.8000)
    (1.0112,0.8333)
    (1.0157,0.8333)
    (1.0165,0.8667)
    (1.0522,0.8667)
    (1.0530,0.9000)
    (1.0964,0.9000)
    (1.1033,0.9333)
    (1.1045,0.9667)
    (1.2173,0.9667)
    (1.25,1.0000)
  };
  \addlegendentry{HybS-NC}
  \addplot+[mark=none, ultra thick, green!40!gray, densely dotted] coordinates {
    (1.0000,0.0333)
    (1.0000,0.2333)
    (1.0001,0.2333)
    (1.0001,0.4667)
    (1.0002,0.5000)
    (1.0003,0.5000)
    (1.0004,0.5333)
    (1.0008,0.5333)
    (1.0013,0.5667)
    (1.0018,0.5667)
    (1.0026,0.6000)
    (1.0035,0.6000)
    (1.0037,0.6333)
    (1.0044,0.6333)
    (1.0045,0.6667)
    (1.0075,0.6667)
    (1.0082,0.7000)
    (1.0084,0.7000)
    (1.0087,0.7333)
    (1.0094,0.7333)
    (1.0095,0.7667)
    (1.0112,0.7667)
    (1.0113,0.8000)
    (1.0244,0.8000)
    (1.0279,0.8333)
    (1.0298,0.8667)
    (1.0303,0.9000)
    (1.0964,0.9000)
    (1.1033,0.9333)
    (1.1045,0.9333)
    (1.1143,0.9667)
    (1.1154,0.9667)
    (1.1270,1.0000)
    (1.25,1.0000)
  };
  \addlegendentry{T-D-NMDT}
  \addplot+[mark=none, thick, green!40!gray, solid] coordinates {
    (1.0000,0.1000)
    (1.0000,0.3333)
    (1.0001,0.3333)
    (1.0001,0.4667)
    (1.0002,0.4667)
    (1.0002,0.5000)
    (1.0003,0.5333)
    (1.0037,0.5333)
    (1.0042,0.5667)
    (1.0062,0.5667)
    (1.0063,0.6000)
    (1.0084,0.6000)
    (1.0087,0.6333)
    (1.0118,0.6333)
    (1.0141,0.6667)
    (1.0165,0.6667)
    (1.0190,0.7000)
    (1.0303,0.7000)
    (1.0351,0.7333)
    (1.0399,0.7333)
    (1.0499,0.7667)
    (1.0503,0.7667)
    (1.0505,0.8000)
    (1.0522,0.8333)
    (1.0530,0.8333)
    (1.0630,0.8667)
    (1.0787,0.8667)
    (1.0964,0.9000)
    (1.1033,0.9333)
    (1.1143,0.9333)
    (1.1154,0.9667)
    (1.1270,0.9667)
    (1.1343,1.0000)
    (1.25,1.0000)
  };
  \addlegendentry{T-D-NMDT-NC}
  \draw node[right,draw,align=left] {$L=6$\\};
  \end{axis}
\end{tikzpicture}
            \end{center}
        \end{minipage}
    \end{center}
    \hfill
    \caption{Performance profiles on dual bounds of best MIP relaxation compared to Gurobi as MIQCQP solver, with and without cuts, on sparse instances. }\label{gurobi_sparse}
  \vspace*{\floatsep}%
    \begin{center}
        \begin{minipage}{0.475\textwidth}
            \begin{center}
                 \begin{tikzpicture}
  \begin{axis}[const plot,
  cycle list={
  {blue,solid},
  {red!40!gray,dashed},
  {black,dotted},
  {brown,dashdotted},
  {green!40!gray!80!black,dashdotdotted},
  {magenta!80!black,densely dotted}},
    xmin=1, xmax=1.25,
    ymin=-0.003, ymax=1.003,
    ymajorgrids,
    ytick={0,0.2,0.4,0.6,0.8,1.0},
    xlabel={$\tau$},
    ylabel={$P(\tau)$},
,
    legend pos={south east},
    legend style={font=\tiny, fill opacity=0.7, draw=none},
    width=\textwidth
    ]
  \addplot+[mark=none, ultra thick, black, dashed] coordinates {
    (1.0000,0.6000)
    (1.0000,0.9667)
    (1.0001,0.9667)
    (1.0001,1.0000)
    (1.25,1.0000)
  };
  \addlegendentry{Gurobi}
  \addplot+[mark=none, thick, black, solid] coordinates {
    (1.0000,0.2333)
    (1.0000,0.6333)
    (1.0001,0.6667)
    (1.0719,0.6667)
    (1.0789,0.7000)
    (1.2067,0.7000)
    (1.2248,0.7333)
    (1.25,1.0000)
  };
  \addlegendentry{Gurobi-NC}
  \addplot+[mark=none, ultra thick, red!40!gray, dashed] coordinates {
    (1.0000,0.2333)
    (1.0061,0.2333)
    (1.0122,0.2667)
    (1.0329,0.3000)
    (1.0340,0.3000)
    (1.0351,0.3333)
    (1.0374,0.3333)
    (1.0555,0.3667)
    (1.0645,0.4000)
    (1.0993,0.4000)
    (1.1068,0.4333)
    (1.1186,0.4333)
    (1.1318,0.4667)
    (1.2052,0.4667)
    (1.2067,0.5000)
    (1.25,1.0000)
  };
  \addlegendentry{HybS}
  \addplot+[mark=none, thick, red!40!gray, solid] coordinates {
    (1.0000,0.2333)
    (1.0061,0.2333)
    (1.0122,0.2667)
    (1.0329,0.3000)
    (1.0340,0.3000)
    (1.0351,0.3333)
    (1.0555,0.3333)
    (1.0555,0.3667)
    (1.0645,0.4000)
    (1.2248,0.4000)
    (1.2371,0.4333)
    (1.2426,0.4333)
    (1.25,1.0000)
  };
  \addlegendentry{HybS-NC}
  \addplot+[mark=none, ultra thick, green!40!gray, densely dotted] coordinates {
    (1.0000,0.3333)
    (1.0000,0.3667)
    (1.0001,0.3667)
    (1.0001,0.4000)
    (1.0050,0.4333)
    (1.0061,0.4667)
    (1.0329,0.4667)
    (1.0340,0.5000)
    (1.0374,0.5000)
    (1.0374,0.5333)
    (1.0645,0.5333)
    (1.0719,0.5667)
    (1.0789,0.5667)
    (1.0885,0.6000)
    (1.0888,0.6333)
    (1.0993,0.6667)
    (1.1489,0.6667)
    (1.1526,0.7000)
    (1.2052,0.7333)
    (1.2371,0.7333)
    (1.2426,0.7667)
    (1.25,1.0000)
  };
  \addlegendentry{T-D-NMDT}
  \addplot+[mark=none, thick, green!40!gray, solid] coordinates {
    (1.0000,0.3000)
    (1.0001,0.3000)
    (1.0001,0.3333)
    (1.0050,0.3333)
    (1.0061,0.3667)
    (1.0351,0.3667)
    (1.0374,0.4000)
    (1.1068,0.4000)
    (1.1079,0.4333)
    (1.1186,0.4667)
    (1.1318,0.4667)
    (1.1489,0.5000)
    (1.25,1.0000)
  };
  \addlegendentry{T-D-NMDT-NC}
  \draw node[right,draw,align=left] {$L=1$\\};
  \end{axis}
\end{tikzpicture}
            \end{center}
        \end{minipage}
        \hfill
        \begin{minipage}{0.475\textwidth}
            \begin{center}
                 \begin{tikzpicture}
  \begin{axis}[const plot,
  cycle list={
  {blue,solid},
  {red!40!gray,dashed},
  {black,dotted},
  {brown,dashdotted},
  {green!40!gray!80!black,dashdotdotted},
  {magenta!80!black,densely dotted}},
    xmin=1, xmax=1.25,
    ymin=-0.003, ymax=1.003,
    ymajorgrids,
    ytick={0,0.2,0.4,0.6,0.8,1.0},
    xlabel={$\tau$},
    ylabel={$P(\tau)$},
,
    legend pos={south east},
    legend style={font=\tiny, fill opacity=0.7, draw=none},
    width=\textwidth
    ]
  \addplot+[mark=none, ultra thick, black, dashed] coordinates {
    (1.0000,0.5333)
    (1.0000,0.9667)
    (1.0001,0.9667)
    (1.0001,1.0000)
    (1.25,1.0000)
  };
  \addlegendentry{Gurobi}
  \addplot+[mark=none, thick, black, solid] coordinates {
    (1.0000,0.2000)
    (1.0000,0.6333)
    (1.0001,0.6667)
    (1.0196,0.6667)
    (1.0789,0.7000)
    (1.2227,0.7000)
    (1.2248,0.7333)
    (1.25,1.0000)
  };
  \addlegendentry{Gurobi-NC}
  \addplot+[mark=none, ultra thick, red!40!gray, dashed] coordinates {
    (1.0000,0.3000)
    (1.0000,0.3333)
    (1.0001,0.3333)
    (1.0036,0.3667)
    (1.0059,0.3667)
    (1.0196,0.4000)
    (1.0789,0.4000)
    (1.1241,0.4333)
    (1.1356,0.4333)
    (1.1568,0.4667)
    (1.2218,0.4667)
    (1.2227,0.5000)
    (1.25,1.0000)
  };
  \addlegendentry{HybS}
  \addplot+[mark=none, thick, red!40!gray, solid] coordinates {
    (1.0000,0.3000)
    (1.0000,0.3333)
    (1.0001,0.3333)
    (1.0036,0.3667)
    (1.0196,0.3667)
    (1.0196,0.4000)
    (1.2248,0.4000)
    (1.2458,0.4333)
    (1.2477,0.4333)
    (1.25,1.0000)
  };
  \addlegendentry{HybS-NC}
  \addplot+[mark=none, ultra thick, green!40!gray, densely dotted] coordinates {
    (1.0000,0.3667)
    (1.0000,0.4333)
    (1.0036,0.4333)
    (1.0059,0.4667)
    (1.1241,0.4667)
    (1.1332,0.5000)
    (1.1356,0.5333)
    (1.1568,0.5333)
    (1.1709,0.5667)
    (1.1958,0.5667)
    (1.2016,0.6000)
    (1.2018,0.6333)
    (1.2218,0.6667)
    (1.2477,0.6667)
    (1.2477,0.7333)
    (1.25, 1.0000)
  };
  \addlegendentry{T-D-NMDT}
  \addplot+[mark=none, thick, green!40!gray, solid] coordinates {
    (1.0000,0.3333)
    (1.0000,0.3667)
    (1.0059,0.3667)
    (1.0059,0.4000)
    (1.1709,0.4000)
    (1.1884,0.4333)
    (1.1936,0.4667)
    (1.1958,0.5000)
    (1.2458,0.5000)
    (1.2477,0.5333)
    (1.2477,0.5667)
    (1.25,1.0000)
  };
  \addlegendentry{T-D-NMDT-NC}
  \draw node[right,draw,align=left] {$L=2$\\};
  \end{axis}
\end{tikzpicture}
            \end{center}
        \end{minipage}
    \end{center}
    \hfill
    \begin{center}
        \begin{minipage}{0.475\textwidth}
            \begin{center}
                 \begin{tikzpicture}
  \begin{axis}[const plot,
  cycle list={
  {blue,solid},
  {red!40!gray,dashed},
  {black,dotted},
  {brown,dashdotted},
  {green!40!gray!80!black,dashdotdotted},
  {magenta!80!black,densely dotted}},
    xmin=1, xmax=1.25,
    ymin=-0.003, ymax=1.003,
    ymajorgrids,
    ytick={0,0.2,0.4,0.6,0.8,1.0},
    xlabel={$\tau$},
    ylabel={$P(\tau)$},
,
    legend pos={south east},
    legend style={font=\tiny, fill opacity=0.7, draw=none},
    width=\textwidth
    ]
  \addplot+[mark=none, ultra thick, black, dashed] coordinates {
    (1.0000,0.5333)
    (1.0000,0.9667)
    (1.0001,0.9667)
    (1.0001,1.0000)
    (1.25,1.0000)
  };
  \addlegendentry{Gurobi}
  \addplot+[mark=none, thick, black, solid] coordinates {
    (1.0000,0.2333)
    (1.0000,0.6333)
    (1.0001,0.6667)
    (1.0324,0.6667)
    (1.0789,0.7000)
    (1.2237,0.7000)
    (1.2248,0.7333)
    (1.25,1.0000)
  };
  \addlegendentry{Gurobi-NC}
  \addplot+[mark=none,ultra thick, red!40!gray, dashed] coordinates {
    (1.0000,0.3000)
    (1.0000,0.3667)
    (1.0010,0.3667)
    (1.0010,0.4000)
    (1.0107,0.4000)
    (1.0311,0.4333)
    (1.0324,0.4333)
    (1.0324,0.4667)
    (1.0926,0.4667)
    (1.1370,0.5000)
    (1.1504,0.5000)
    (1.1666,0.5333)
    (1.25,1.0000)
  };
  \addlegendentry{HybS}
  \addplot+[mark=none, thick, red!40!gray, solid] coordinates {
    (1.0000,0.3000)
    (1.0000,0.3667)
    (1.0002,0.3667)
    (1.0010,0.4000)
    (1.0311,0.4000)
    (1.0311,0.4333)
    (1.0324,0.4667)
    (1.2414,0.4667)
    (1.25,1.0000)
  };
  \addlegendentry{HybS-NC}
  \addplot+[mark=none,ultra thick, green!40!gray, densely dotted] coordinates {
    (1.0000,0.3000)
    (1.0000,0.3667)
    (1.0002,0.3667)
    (1.0002,0.4000)
    (1.0095,0.4000)
    (1.0095,0.4333)
    (1.0107,0.4667)
    (1.0789,0.4667)
    (1.0926,0.5000)
    (1.1370,0.5000)
    (1.1504,0.5333)
    (1.25,1.0000)
  };
  \addlegendentry{T-D-NMDT}
  \addplot+[mark=none, thick, green!40!gray, solid] coordinates {
    (1.0000,0.3000)
    (1.0000,0.3667)
    (1.0001,0.3667)
    (1.0002,0.4000)
    (1.0010,0.4000)
    (1.0095,0.4333)
    (1.0107,0.4667)
    (1.1666,0.4667)
    (1.2237,0.5000)
    (1.2248,0.5000)
    (1.2414,0.5333)
    (1.25,1.0000)
  };
  \addlegendentry{T-D-NMDT-NC}
  \draw node[right,draw,align=left] {$L=4$\\};
  \end{axis}
\end{tikzpicture}
            \end{center}
        \end{minipage}
        \hfill
        \begin{minipage}{0.475\textwidth}
            \begin{center}
                 \begin{tikzpicture}
  \begin{axis}[const plot,
  cycle list={
  {blue,solid},
  {red!40!gray,dashed},
  {black,dotted},
  {brown,dashdotted},
  {green!40!gray!80!black,dashdotdotted},
  {magenta!80!black,densely dotted}},
    xmin=1, xmax=1.25,
    ymin=-0.003, ymax=1.003,
    ymajorgrids,
    ytick={0,0.2,0.4,0.6,0.8,1.0},
    xlabel={$\tau$},
    ylabel={$P(\tau)$},
,
    legend pos={south east},
    legend style={font=\tiny, fill opacity=0.7, draw=none},
    width=\textwidth
    ]
  \addplot+[mark=none,ultra thick,black, dashed] coordinates {
    (1.0000,0.5333)
    (1.0000,0.9667)
    (1.0001,0.9667)
    (1.0001,1.0000)
    (1.25,1.0000)
  };
  \addlegendentry{Gurobi}
  \addplot+[mark=none, thick, black, solid] coordinates {
    (1.0000,0.2333)
    (1.0000,0.6333)
    (1.0001,0.6667)
    (1.0392,0.6667)
    (1.0789,0.7000)
    (1.1906,0.7000)
    (1.2248,0.7333)
    (1.25,1.0000)
  };
  \addlegendentry{Gurobi-NC}
  \addplot+[mark=none,ultra thick, red!40!gray, dashed] coordinates {
    (1.0000,0.3333)
    (1.0000,0.3667)
    (1.0001,0.3667)
    (1.0001,0.4000)
    (1.0006,0.4000)
    (1.0007,0.4333)
    (1.1540,0.4333)
    (1.1729,0.4667)
    (1.2248,0.4667)
    (1.2300,0.5000)
    (1.25,1.0000)
  };
  \addlegendentry{HybS}
  \addplot+[mark=none, thick, red!40!gray, solid] coordinates {
    (1.0000,0.2667)
    (1.0000,0.4000)
    (1.0007,0.4000)
    (1.0007,0.4333)
    (1.1793,0.4333)
    (1.1827,0.4667)
    (11.2937,1.0000)
  };
  \addlegendentry{HybS-NC}
  \addplot+[mark=none,ultra thick, green!40!gray, densely dotted] coordinates {
    (1.0000,0.0667)
    (1.0000,0.3667)
    (1.0001,0.3667)
    (1.0005,0.4000)
    (1.0006,0.4333)
    (1.0007,0.4333)
    (1.0080,0.4667)
    (1.0789,0.4667)
    (1.1540,0.5000)
    (1.1827,0.5000)
    (1.1906,0.5333)
    (1.25,1.0000)
  };
  \addlegendentry{T-D-NMDT}
  \addplot+[mark=none, thick, green!40!gray, solid] coordinates {
    (1.0000,0.1000)
    (1.0000,0.3667)
    (1.0080,0.3667)
    (1.0149,0.4000)
    (1.0392,0.4333)
    (1.1729,0.4333)
    (1.1793,0.4667)
    (1.2300,0.4667)
    (1.2448,0.5000)
    (1.25,1.0000)
  };
  \addlegendentry{T-D-NMDT-NC}
  \draw node[right,draw,align=left] {$L=6$\\};
  \end{axis}
\end{tikzpicture}
            \end{center}
        \end{minipage}
    \end{center}
    \hfill
    \caption{Performance profiles on dual bounds of the best MIP relaxation compared to Gurobi as MIQCQP solver, with and without cuts, on dense instances. }\label{gurobi_dense}
\end{figure}

\begin{table}
    \caption{Number of feasible solutions found with different relative optimality gaps. The first number corresponds to a gap of less than 0.01\%, the second to a gap of less than 1\% and the third number indicates the number of feasible solutions.}
    \label{table_feasible_gurobi_all}
    \centering
    \begin{tabular} {l c  c  c  c c c}
        \toprule
        {\qquad} & {    \HybS    } & {      \HybS-NC      } & {  T-\DNMDT  } & {  T-\DNMDT-NC  } & {  Gurobi  } & {  Gurobi-NC  } \\ 
        \midrule
        L1 & 31/33/40 & 28/31/40 & 29/33/40 & 31/34/42 & \textbf{50}/\textbf{50}/\textbf{57} & 46/49/56 \\
        L2 & 32/37/44 & 31/36/41 & 34/37/42 & 35/41/44 & \textbf{50}/\textbf{50}/\textbf{57} & 46/49/56 \\
        L4 & 41/44/50 & 40/45/53 & 45/47/51 & 40/45/50 & \textbf{50}/\textbf{50}/\textbf{57} & 46/49/56 \\
        L6 & 40/43/51 & 43/48/50 & 46/47/50 & 40/46/49 & \textbf{50}/\textbf{50}/\textbf{57} & 46/49/56 \\
        \bottomrule
    \end{tabular}
\end{table}

\newpage
\section{Conclusion}

We introduced an enhanced \emph{mixed-integer programming} (MIP) relaxation technique for \non convex \emph{mixed-integer quadratically constrained quadratic programs} (MIQCQP), called \emph{doubly discretized normalized multiparametric disaggregation technique} (\DNMDT).
We showed that it has clear theoretical advantages over its predecessor \NMDT, \ie it requires a significantly lower number of binary variables to achieve the same accuracy. 
In addition, we combined both, \DNMDT and \NMDT, with the \emph{sawtooth epigraph relaxation} from Part I~\cite{Part_I} to further strengthen the relaxations for univariate quadratic terms.

In a two-part computational study, we first compared \DNMDT to \NMDT. 
We showed that \DNMDT determines far better dual bounds than \NMDT and also has shorter run times.
Furthermore, we were able to show that our tightening in both methods led to better dual bounds while simultaneously shortening the computation time.
In the second part of the computational study, we compared
the \emph{tightened D-NMDT} (T-\DNMDT) against
\emph{Hybrid Separable} (\HybS), the best-performing MIP relaxation from Part I.
We showed that HybS does perform slightly better in terms of dual bounds.
However, both new methods were able to find high-quality solutions to the original quadratic problems when used in conjunction with a primal solution callback function and a local \non linear programming solver.
Furthermore, we showed that they both method can partially compete with the state-of-the-art MIQCQP solver Gurobi.

Finally, we gave some indications on how to further improve the new approaches.
Two of the most promising directions in this context are employing adaptivity and adding MIQCQP-specific cuts that are valid but not recognized by the MIP solvers.  This is the subject of future work.

\section*{Data availability statement} 
The boxQP instances are publicly available at
\href{https://github.com/joehuchette/quadratic-relaxation-experiments}{https://github.com/joehuchette/\\quadratic-relaxation-experiments}.
The ACOPF instances are publicly available at \href{https://github.com/robburlacu/acopflib}{https://github.com/robburlacu/acopflib}. 
The QPLIB instances are publicly available at \href{https://qplib.zib.de/}{https://qplib.zib.de/}.

\section*{Conflict of interest} 
The authors declare that they have no confict of interest.
\bibliographystyle{plain}  
\bibliography{references}

\begin{thebibliography}{10}

\bibitem{aigner2020solving}
Kevin-Martin Aigner, Robert Burlacu, Frauke Liers, and Alexander Martin.
\newblock Solving {AC} optimal power flow with discrete decisions to global
  optimality.
\newblock To appear in {INFORMS} Journal on Computing, 2023.

\bibitem{Hager-2021}
Andreas B{\"a}rmann, Robert Burlacu, Lukas Hager, and Thomas Kleinert.
\newblock On piecewise linear approximations of bilinear terms: structural
  comparison of univariate and bivariate mixed-integer programming
  formulations.
\newblock {\em Journal of Global Optimization}, pages 1--31, 2022.

\bibitem{Part_I}
Benjamin Beach, Robert Burlacu, Andreas B{\"a}rmann, Lukas Hager, and Robert
  Hildebrand.
\newblock Enhancements of discretization approaches for non-convex
  mixed-integer quadratically constraint quadratic programming: Part {I}.
\newblock {\em arXiv preprint arXiv:2211.00876}, 2022.

\bibitem{Beach2020}
Benjamin Beach, Robert Hildebrand, Kimberly Ellis, and Baptiste Lebreton.
\newblock An approximate method for the optimization of long-horizon tank
  blending and scheduling operations.
\newblock {\em Computers {\&} Chemical Engineering}, 141:106839, 2020.

\bibitem{Beach2020-compact}
Benjamin Beach, Robert Hildebrand, and Joey Huchette.
\newblock Compact mixed-integer programming formulations in quadratic
  optimization.
\newblock {\em Journal of Global Optimization}, 2022.

\bibitem{Burlacu-et-al:2020}
Robert Burlacu, Bj{\"o}rn Gei{\ss}ler, and Lars Schewe.
\newblock Solving mixed-integer nonlinear programmes using adaptively refined
  mixed-integer linear programmes.
\newblock {\em Optimization Methods and Software}, 35(1):37--64, 2020.

\bibitem{castro2015-nmdt}
Pedro~M. Castro.
\newblock Normalized multiparametric disaggregation: an efficient relaxation
  for mixed-integer bilinear problems.
\newblock {\em Journal of Global Optimization}, 64(4):765--784, 2015.

\bibitem{Chen2012}
Jieqiu Chen and Samuel Burer.
\newblock Globally solving nonconvex quadratic programming problems via
  completely positive programming.
\newblock {\em Mathematical Programming Computation}, 4(1):33--52, 2012.

\bibitem{NESTA}
Carleton Coffrin, Dan Gordon, and Paul Scott.
\newblock {NESTA}, the {NICTA} energy system test case archive.
\newblock {\em arXiv preprint arXiv:1411.0359}, 2014.

\bibitem{Dolan2002}
Elizabeth~D Dolan and Jorge~J Mor{\'e}.
\newblock Benchmarking optimization software with performance profiles.
\newblock {\em Mathematical Programming}, 91(2):201--213, 2002.

\bibitem{Dong-Luo-2018}
Hongbo Dong and Yunqi Luo.
\newblock Compact disjunctive approximations to nonconvex quadratically
  constrained programs, 2018.

\bibitem{qplib}
Fabio Furini, Emiliano Traversi, Pietro Belotti, Antonio Frangioni, Ambros
  Gleixner, Nick Gould, Leo Liberti, Andrea Lodi, Ruth Misener, Hans
  Mittelmann, et~al.
\newblock Qplib: a library of quadratic programming instances.
\newblock {\em Mathematical Programming Computation}, 11(2):237--265, 2019.

\bibitem{gurobi}
{Gurobi Optimization, LLC}.
\newblock {Gurobi Optimizer Reference Manual}, 2022.

\bibitem{Huchette:2018}
Joseph~A. Huchette.
\newblock {\em Advanced mixed-integer programming formulations: methodology,
  computation, and application}.
\newblock PhD thesis, Massachusetts Institute of Technology, 2018.

\bibitem{Linderoth:2005}
Jeff Linderoth.
\newblock A simplicial branch-and-bound algorithm for solving quadratically
  constrained quadratic programs.
\newblock {\em Mathematical Programming}, 103(2):251--282, 2005.

\bibitem{McCormick1976}
Garth~P. McCormick.
\newblock Computability of global solutions to factorable nonconvex programs:
  Part {I} {\textemdash} convex underestimating problems.
\newblock {\em Mathematical Programming}, 10(1):147--175, 1976.

\bibitem{perprof}
Abel~Soares Siqueira, Raniere~Costa da~Silva, and Luiz-Rafael Santos.
\newblock Perprof-py: A python package for performance profile of mathematical
  optimization software.
\newblock {\em Journal of Open Research Software}, 4(1), 2016.

\bibitem{Telgarsky2015}
Matus Telgarsky.
\newblock Representation benefits of deep feedforward networks.
\newblock https://arxiv.org/abs/1509.08101, 2015.

\bibitem{ipopt}
Andreas Wachter.
\newblock {\em An interior point algorithm for large-scale nonlinear
  optimization with applications in process engineering}.
\newblock PhD thesis, Carnegie Mellon University, 2002.

\bibitem{Yarotsky-2016}
Dmitry Yarotsky.
\newblock Error bounds for approximations with deep relu networks.
\newblock {\em Neural Networks}, 94:103--114, 2017.

\end{thebibliography}


\appendix

\section{Detailed Derivation of the MIP Relaxation D-NMDT}
\label{ssec:derivations}
For the derivation of the MIP relaxation \DNMDT for $ \gra_{[0, 1]^2}(xy) $, we first define 
\begin{equation}
    \begin{array}{rll}
         x &= \dsum_{j = 1}^L 2^{-j}\beta^x_j + \Del x, \quad
         y = \dsum_{j = 1}^L 2^{-j}\beta^y_j + \Del y,\\
         \Del x &\in [0, 2^{-L}],\, \Del y \in [0, 2^{-L}],\,
         \bm \beta^x \in \{0, 1\}^L,\, \bm \beta^y \in \{0, 1\}^L.
    \end{array}
\end{equation}
Then we use the NMDT representation~\eqref{eq:NMDT-xy-exact},
expand the~$ \Del x y $-term and obtain
\begin{equation*}
    \begin{array}{rll}
         \z = xy &= y \lrp{\dsum_{j = 1}^L 2^{-j}\beta^x_j + \Del x} \\
           &= \dsum_{j = 1}^L 2^{-j} \beta^x_j y + y \Del x\\
           &= \dsum_{j = 1}^L 2^{-j} \beta^x_j y + \Del x \lrp{\dsum_{j = 1}^L 2^{-j}\beta^y_j + \Del y}\\
           &= \dsum_{j = 1}^L 2^{-j} (\beta^x_j y + \beta^y_j \Del x) + \Del x \Del y.\\
         \end{array}
\end{equation*}
Alternatively, if we discretize $y$ first, then expand the term~$ \Del y x $, we obtain
\begin{equation*}
    \begin{array}{rll}
         \z  = \dsum_{j=1}^L 2^{-j} (\beta^y_j x + \beta^x_j \Del y) + \Del x \Del y.\\
    \end{array}
\end{equation*}
Finally, to balance between the two formulations, we observe for any $ \lambda \in [0, 1] $ that
\begin{equation*}
    \begin{array}{rll}
         \z &= xy = \lambda xy + (1 - \lambda) xy\\
           &= \lambda \lrp{\dsum_{j = 1}^L 2^{-j} (\beta^y_j x + \beta^x_j \Del y) + \Del x \Del y}\\
           &+ (1 - \lambda) \lrp{\dsum_{j = 1}^L 2^{-j} (\beta^x_j y + \beta^y_j \Del x) + \Del x \Del y}\\
           &= \dsum_{j = 1}^L 2^{-j}[\beta^y_j ((1-\lambda) \Del x + \lambda x) + \beta^x_j (\lambda \Del y + (1 - \lambda) y)] + \Del x \Del y
         \end{array}
\end{equation*}
holds.
This yields
\begin{equation}
    \label{eq:D-NMDT-xy-01-exact}
    \begin{array}{rll}
         x &= \dsum_{j = 1}^L 2^{-j}\beta^x_j + \Del x, \quad
         y = \dsum_{j = 1}^L 2^{-j}\beta^y_j + \Del y\\
         z &= \dsum_{j = 1}^L 2^{-j}[\beta^y_j ((1 - \lambda) \Del x + \lambda x) + \beta^x_j (\lambda \Del y + (1 - \lambda) y)] + \Del x \Del y\\
         \Del x, \Del y &\in [0, 2^{-L}], \quad
         x, y \in [0, 1], \quad
         \bm \beta^x, \bm \beta^y \in \{0, 1\}^L.
    \end{array}
\end{equation}
Finally, we obtain the complete MIP relaxation \DNMDT stated in~\eqref{eq:D-NMDT}
by applying McCormick envelopes to the product terms $ \beta^y_j ((1 - \lambda) \Del x + \lambda x) $,
$ \beta^x_j (\lambda \Del y + (1 - \lambda) y) $ and~$ \Del x \Del y $.
For bounds on the terms $ ((1 - \lambda) \Del x + \lambda x) $
and $ (\lambda \Del y + (1 - \lambda) y) $,
see \Cref{app:gen-bnds}.

\section{MIP Relaxations on General Intervals}
\label{app:gen-bnds}

In this section, we generalize the MIP relaxations
for $ \gra_{[0, 1]^2}(xy) $ and $ \gra_{[0, 1]}^2(x^2) $ discussed in this article
to general box domains $ (x, y) \in [\xmin, \xmax] \times \in [\ymin, \ymax] $
and $ x \in [\xmin, \xmax] $, where $ \xmin < \xmax $, $ \ymin < \ymax $
and $ \xmin, \xmax, \ymin, \ymax \in \R$.
by giving explicit formulations for general bounds on~$x$ and~$y$.

\subsection{MIP Relaxations for Bivariate Quadratic Equations}
First, we consider MIP relaxations for $ z = xy $
and give explicit models of \NMDT and \DNMDT for general box domains.

Next, we consider the MIP relaxation \NMDT.
To derive the general formulation, we first introduce $ \xhat \in [0, 1] $ and define $ \zhat = \xhat y $,
then use the definitions $ x = \lx \xhat + \xmin $ and
\begin{equation*}
     \z = xy = (\lx \xhat + \xmin)y = \lx \zhat + \xmin \cdot y
\end{equation*}
to obtain
\begin{equation}
    \label{eq:NMDT-xy-relax}
    \begin{array}{rll}
        x &= \lx\dsum_{j = 1}^L 2^{-j}\beta_j + \Del x + \xmin\\
        z &= \lx\dsum_{j = 1}^L 2^{-j} \beta_j y + \Del x \cdot y + \xmin \cdot y\\
        \Del x &\in [0, 2^{-L}(\xmax - \xmin)], \quad
        y \in [\ymin, \ymax], \quad
        \bm \beta &\in \{0, 1\}^L.
    \end{array}
\end{equation}
In this way, we are able to formulate the MIP relaxation \NMDT on a general box domain as follows:
\begin{equation}
    \label{eq:NMDT-gen}
    \begin{array}{rll}
         x &= \lx \dsum_{j = 1}^L 2^{-i}\beta_j + \Del x + \xmin\\
         \z &= \lx \dsum_{j = 1}^L 2^{-j}u_j + \Del z + \xmin \cdot y\\
         (x, \alpha_j, u_j) &\in \mathcal{M}(x, \beta_j) & j \in 1, \ldots, L\\
         (\Del x, y, \Del z) &\in \mathcal{M}(\Del x, y)\\
         \Del x &\in [0, 2^{-L}\lx], \quad
         y \in [\ymin, \ymax], \quad
         \bm \beta \in \{0, 1\}^L
    \end{array}
\end{equation}

Finally, we present the modelling of \DNMDT on general box domains.
Analogously as for \NMDT, we apply McCormick envelopes
to model all remaining product terms~$ \alpha_j y $ and~$ \Del x \cdot y $.
Further, we introduce the variables $ \xhat \in [0, 1] $ and $ \yhat \in [0, 1] $
to map the domain to $ [0, 1] $ intervals
by using the transformations $ x \define \lx \xhat + \xmin $ and $ y \define \ly \yhat + \ymin $
as well as 
\begin{equation*}
    \begin{array}{rl}
        z &= xy = (\lx \xhat + \xmin)(\ly \yhat + \ymin)\\
            &= \lx \ly\xhat \yhat + \lx \xhat \ymin + \ly \yhat \xmin + \xmin \ymin\\
            &= \lx \ly \zhat + \lx \xhat \ymin + \ly \yhat \xmin + \xmin \ymin.
    \end{array}
\end{equation*}
As in the derivation of~\eqref{eq:D-NMDT}, we then obtain the formulation \DNMDT
by applying McCormick envelopes to the product terms~$ \beta_i ((1-\lambda) \Del \xhat + \lambda \xhat) $, $ \alpha_i (\lambda \Del \yhat + (1 - \lambda) \yhat) $ and $ \Del \xhat \Del \yhat $.
As in~\eqref{eq:D-NMDT}, we incorporate the following bounds to construct McCormick envelopes:
\begin{equation*}
    \begin{array}{rl}
        (1 - \lambda) \Del \xhat + \lambda \xhat &\in [0, (1-\lambda) 2^{-L} + \lambda] \\
        \lambda \Del \yhat + (1-\lambda) \yhat &\in [0, \lambda 2^{-L} + (1-\lambda)].
    \end{array}
\end{equation*}
Altogether, we are now ready to state the MIP relaxation \DNMDT on general box domains:
\begin{equation}
    \label{eq:D-NMDT-gen}
    \begin{array}{rll}
          x &= \lx \xhat + \xmin, \quad 
          y = \ly \yhat + \ymin\\
          \z &= \lx \ly \zhat + \lx \xhat \ymin + \ly \yhat \xmin + \xmin \ymin\\
         \xhat &= \dsum_{j = 1}^L 2^{-j}\beta^x_j + \Del \xhat, \quad 
         \yhat = \dsum_{j = 1}^L 2^{-j}\beta^y_j + \Del \yhat\\
         \zhat &=  \dsum_{j = 1}^L 2^{-j}(u_j + v_j) + \Del \zhat\\
         (\lambda \Del \yhat + (1-\lambda) \yhat, \beta^x_j, u_j) &\in \mathcal{M}(\lambda \Del \yhat + (1 - \lambda) \yhat, \alpha_j) &\quad j \in 1, \ldots, L\\
         ((1 - \lambda) \Del \xhat + \lambda \xhat, \beta^y_j, v_j) &\in \mathcal{M}((1 - \lambda) \Del \xhat + \lambda \xhat, \beta_j) & \quad j \in 1, \ldots, L\\
         (\Del \xhat, \Del \yhat, \Del \zhat) &\in \mathcal{M}(\Del \xhat, \Del \yhat)\\
         \Del \xhat, \Del \yhat &\in [0, 2^{-L}], \quad
         \xhat, \yhat \in [0, 1], \quad
         \bm \beta^x, \bm \beta^y \in \{0, 1\}^L
    \end{array}
\end{equation}

\subsection{MIP Relaxations for Univariate Quadratic Equations}

For \NMDT and \DNMDT, 
we derive the general formulations by using the derivations in \Cref{ssec:derivations} with $ x = y $.
In the case of \NMDT, where the original model is~\eqref{eq:NMDT-xsq}, this leads to
\begin{equation}
    \label{eq:NMDT-xsq-gen}
    \begin{array}{rll}
         x &= \lx \dsum_{i = 1}^L 2^{-i}\beta_i + \Del x + \xmin\\
         \y &= \lx \dsum_{i = 1}^L 2^{-i}u_i + \Del \y + \xmin \cdot x\\
         (x, \beta_i, u_i) &\in \mathcal{M}(x, \alpha_i) & i \in 1, \ldots, L\\
         (\Del x, x, \Del \y) &\in \mathcal{M}(\Del x, x)\\
         \Del x &\in [0, 2^{-L} \lx], \quad
         x \in [\xmin, \xmax], \quad
         \bm \beta \in \{0, 1\}^L.
    \end{array}
\end{equation}
For \DNMDT, we obtain~\eqref{eq:D-NMDT-xsq} for general domains as follows:
\begin{equation}
    \label{eq:D-NMDT-xsq-gen}
    \begin{array}{rll}
         x &= \lx\dsum_{i = 1}^L 2^{-i}\beta_i + \lx \Del x + \xmin\\
         z &= \lx\dsum_{i = 1}^L 2^{-i} u_i + \lx^2 \Del z + \xmin(x + \lx\Del x)\\
         (\lx \Del x + x, \beta_i, u_i) &\in \mathcal{M}(\lx \Del x + x, \beta_i) &\quad i \in 1, \ldots, L\\
         (\Del x, \Del z) &\in \mathcal{M}(\Del x)\\
         \Del x &\in [0, 2^{-L}], \quad
         x \in [\xmin, \xmax], \quad
         \bm \beta \in \{0, 1\}^L,
    \end{array}
\end{equation}
with $ \lx \Del x + x \in [\xmin, \lx 2^{-L} + \xmax] $.
\section{Instance set}
\label{sec:instance_set}
In \cref{table_instance} we show a listing of all instances of the computational study from \cref{sec:computations}.
The boxQP instances are publicly available at
\href{https://github.com/joehuchette/quadratic-relaxation-experiments}{https://github.com /joehuchette/quadratic-relaxation-experiments}.
The ACOPF instances are also publicly available at \href{https://github.com/robburlacu/acopflib}{https://github.com/robburlacu/acopflib}. 
The QPLIB instances are available at \href{https://qplib.zib.de/}{https://qplib.zib.de/}.
In total, we have 60 instances, of which 30 are dense and 30 are sparse.

\begin{table}[h]
\caption{IDs of all 60 instances used in the computational study. In bold are the IDs of the instances that are dense. }
\centering
\begin{tabular}{rrrrrrrrr}
\toprule
\multicolumn{9}{c}{\textnormal{boxQP instances: spar}} \\
\midrule
\textbf{020-100-1} & & \textbf{020-100-2} & & \textbf{030-060-1} & & \textbf{030-060-3} & & \textbf{040-030-1} \\
\textbf{040-030-2} & & \textbf{050-030-1} & & \textbf{050-030-2} & & \textbf{060-020-1} & & \textbf{060-020-2} \\
070-025-2 & & \textbf{070-050-1} & & \textbf{080-025-1} & & \textbf{080-050-2} & & \textbf{090-025-1} \\
\textbf{090-050-2} & & \textbf{100-025-1} & & \textbf{100-050-2} & & \textbf{125-025-1} & & \textbf{125-050-1} \\
\midrule
\multicolumn{9}{c}{\textnormal{ACOPF instances: miqcqp\_ac\_opf\_nesta\_case}} \\
\midrule
3\_lmbd\_api & & 4\_gs\_api & & 4\_gs\_sad & & 5\_pjm\_api & & 5\_pjm\_sad \\
6\_c\_api & & 6\_c\_sad & & 6\_ww\_sad & & 6\_ww & & 9\_wscc\_api \\
9\_wscc\_sad & & 14\_ieee\_api & & 14\_ieee\_sad & & 24\_ieee\_rts\_api & & 24\_ieee\_rts\_sad \\
29\_edin\_api & & 29\_edin\_sad & & 30\_fsr\_api & & 30\_ieee\_sad & & 9\_epri\_api \\
\midrule
\multicolumn{9}{c}{\textnormal{QPLIB instances: QPLIB\_}} \\
\midrule
\textbf{0031} & & \textbf{0032} & & \textbf{0343} & & 0681 & & 0682 \\
0684 & & 0698 & & \textbf{0911} & & \textbf{0975} & & \textbf{1055} \\
\textbf{1143} & & \textbf{1157} & & \textbf{1423} & & \textbf{1922} & & 2882 \\
2894 & & 2935 & & 2958 & & 3358 & & 3814 \\
\bottomrule
\label{table_instance}
\end{tabular}
\end{table}

\end{document}